\theoremstyle{plain}
\newtheorem{proposition}{\protect\propositionname}[section]
\newtheorem{lemma}[proposition]{\protect\lemmaname}
\newtheorem{theorem}[proposition]{\protect\theoremname}
\newtheorem{corollary}[proposition]{\protect\corollaryname}
\newtheorem{definition}[proposition]{\protect\definitionname}
\newtheorem{openproblem}[proposition]{\protect\openproblemname}
\theoremstyle{definition}
\theoremstyle{remark}
\declaretheoremstyle[%
spaceabove=.3\topsep,%
spacebelow=.3\topsep,%
headfont=\small\itshape,%
headpunct={. ---},%
notefont=\normalfont\itshape,%
bodyfont=\itshape,%
headindent=\parindent,%
numbered=yes,%
]{mystyle1}%
\newenvironment{solution}{\noindent\emph{\protect\solutionname. }\pushQED{\qed}}{\popQED}
\newcommand{\propositionname}{}
\newcommand{\lemmaname}{}
\newcommand{\sublemmaname}{}
\newcommand{\theoremname}{}
\newcommand{\corollaryname}{}
\newcommand{\definitionname}{}
\newcommand{\notationname}{}
\newcommand{\claimname}{}
\newcommand{\examplename}{}
\newcommand{\remarkname}{}
\newcommand{\exercisename}{}
\newcommand{\openproblemname}{}
\newcommand{\propertyname}{}
\newcommand{\stepname}{}
\newcommand{\casename}{}
\newcommand{\Claimname}{}
\newcommand{\factname}{}
\newcommand{\Stepname}{}
\newcommand{\Casename}{}
\newcommand{\solutionname}{}
	\renewcommand{\propositionname}{Proposition}%
	\renewcommand{\lemmaname}{Lemma}%
	\renewcommand{\sublemmaname}{Sublemma}%
	\renewcommand{\theoremname}{Theorem}%
	\renewcommand{\corollaryname}{Corollary}%
	\renewcommand{\definitionname}{Definition}%
	\renewcommand{\notationname}{Notation}%
	\renewcommand{\claimname}{Claim}%
	\renewcommand{\examplename}{Example}%
	\renewcommand{\remarkname}{Remark}%
	\renewcommand{\exercisename}{Exercise}%
	\renewcommand{\openproblemname}{Open problem}%
	\renewcommand{\propertyname}{Property}%
	\renewcommand{\stepname}{Step}%
	\renewcommand{\casename}{Case}%
	\renewcommand{\Claimname}{Claim}%
	\renewcommand{\factname}{Fact}%
	\renewcommand{\Stepname}{Step}%
	\renewcommand{\Casename}{Case}%
	\renewcommand{\solutionname}{Solution}%
	\renewcommand{\propositionname}{Proposition}%
	\renewcommand{\lemmaname}{Lemme}%
	\renewcommand{\sublemmaname}{Sous-lemme}%
	\renewcommand{\theoremname}{Théorème}%
	\renewcommand{\corollaryname}{Corollaire}%
	\renewcommand{\definitionname}{Définition}%
	\renewcommand{\notationname}{Notation}%
	\renewcommand{\claimname}{Affirmation}%
	\renewcommand{\examplename}{Exemple}%
	\renewcommand{\remarkname}{Remarque}%
	\renewcommand{\exercisename}{Exercice}%
	\renewcommand{\openproblemname}{Problème ouvert}%
	\renewcommand{\propertyname}{Propriété}%
	\renewcommand{\stepname}{Étape}%
	\renewcommand{\casename}{Cas}%
	\renewcommand{\Claimname}{Affirmation}%
	\renewcommand{\factname}{Fait}%
	\renewcommand{\Stepname}{Étape}%
	\renewcommand{\Casename}{Cas}%
	\renewcommand{\solutionname}{Solution}%
\definecolor{purple}{cmyk}{0.75,0.90,0,0}
\definecolor{DB}{rgb}{0.07,0.0,0.5}
\definecolor{DG}{rgb}{0.0,0.37,0.07}%
\definecolor{DR}{rgb}{0.37,0,0.07}
\newcommand{\intd}[1]{\mathop{}\!\diffd{#1}}
\newcommand{\diffd}{{\operatorfont d}}
\newcommand{\diffD}{{\operatorfont D}}
\newcommand{\bB}{\varmathbb{B}}
\newcommand{\bN}{\varmathbb{N}}
\newcommand{\bR}{\varmathbb{R}}
\newcommand{\bS}{\varmathbb{S}}
\newcommand{\mG}{\mathscr{G}}
\newcommand{\mH}{\mathscr{H}}
\newcommand{\mM}{\mathscr{M}}
\newcommand{\mN}{\mathscr{N}}
\newcommand{\mR}{\mathscr{R}}
\newcommand{\cE}{\mathcal{E}}
\newcommand{\st}{\mathpunct{:}}
\newcommand{\Dext}{{}\diffD}
\newcommand{\cs}{_\textnormal{c}}
\newcommand{\maximal}[2][]{\mathcal{M}\farg[#1]{#2}}
\newcommand{\cball}[4][]{\overline{\ifblank{#2}{\bB}{B_{#2}}\ifblank{#4}{}{^{#4}}}\ifblank{#3}{}{\parens[#1]{#3}}}
\newcommand{\ball}[4][]{\ifblank{#2}{\bB}{B_{#2}}\ifblank{#4}{}{^{#4}}\ifblank{#3}{}{\parens[#1]{#3}}}
\newcommand{\ccube}[4][]{\overline{Q\ifblank{#4}{}{^{#4}}\ifblank{#2}{}{_{#2}}\ifblank{#3}{}{\parens[#1]{#3}}}}
\newcommand{\cube}[4][]{Q\ifblank{#2}{}{_{#2}}\ifblank{#4}{}{^{#4}}\ifblank{#3}{}{\parens[#1]{#3}}}
\newcommand{\restrfun}[1]{_{\vert {#1}}} 
\newcommand{\seq}[3][]{\parens[#1]{#2}_{#3}}
\DeclareMathOperator{\id}{id}
\DeclareMathOperator{\supp}{supp}
\DeclareMathOperator{\dist}{dist}
\DeclareMathOperator{\tr}{tr}
\DeclareMathOperator{\Int}{int}
\newcommand{\Rbase}{\mR}
\DeclarePairedDelimiterX{\abs}[1]{\lvert}{\rvert}{\ifblank{#1}{\:\cdot\:}{#1}}
\DeclarePairedDelimiterX{\norm}[1]{\lVert}{\rVert}{\ifblank{#1}{\:\cdot\:}{#1}}
\DeclarePairedDelimiter{\parens}{\lparen}{\rparen}
\DeclarePairedDelimiterX{\floor}[1]{\lfloor}{\rfloor}{\ifblank{#1}{\:\cdot\:}{#1}}
\DeclarePairedDelimiterX{\set}[2]{\lbrace}{\rbrace}{\ifblank{#2}{#1}{#1\st #2}}
\DeclarePairedDelimiterX{\ooInterval}[2]{\lparen}{\rparen}{#1,#2}
\DeclarePairedDelimiterX{\ccInterval}[2]{\lbrack}{\rbrack}{#1,#2}
\DeclarePairedDelimiterX{\coInterval}[2]{\lbrack}{\rparen}{#1,#2}
\DeclarePairedDelimiterX{\ocInterval}[2]{\lparen}{\rbrack}{#1,#2}
\DeclarePairedDelimiter{\tuple}{\lparen}{\rparen}
\DeclarePairedDelimiterX{\farg}[1]{\lparen}{\rparen}{\ifblank{#1}{\:\cdot\:}{#1}}
\DeclarePairedDelimiterX{\linfarg}[1]{\lbrack}{\rbrack}{\ifblank{#1}{\:\cdot\:}{#1}}
\DeclarePairedDelimiterX{\ScalarProd}[2]{\lparen}{\rparen}{\ifblank{#1}{\:\cdot\:}{#1} \mathbin\delimsize| \ifblank{#2}{\:\cdot\:}{{#2}}}
\DeclarePairedDelimiterX{\DualityProd}[2]{\langle}{\rangle}{\ifblank{#1}{\:\cdot\:}{#1},\ifblank{#2}{\:\cdot\:}{#2}}
\DeclarePairedDelimiterX{\FromTo}[2]{\lparen}{\rparen}{\ifblank{#2}{#1}{#1;#2}}
\numberwithin{equation}{section}
\begin{document}



\title{A surprising threshold for the validity of the method of singular projection}
\author{Antoine Detaille\footnote{Universite Claude Bernard Lyon 1, CNRS, Centrale Lyon, INSA Lyon, Université Jean Monnet, ICJ UMR5208, 69622 Villeurbanne, France.\newline
Departement Mathematik, ETH Zürich, Rämistrasse 101, 8092 Zürich, Switzerland\\
		E-mail:~\texttt{antoine.detaille@math.ethz.ch}}}


\maketitle

\begin{abstract}
	Given a compact manifold \( \mathcal{N} \) embedded into \( \varmathbb{R}^{\nu} \) and a projection \( P \) that retracts \( \varmathbb{R}^{\nu} \) except a singular set of codimension \( \ell \) onto \( \mathcal{N} \), we investigate the maximal range of parameters \( s \) and \( p \) such that the projection \( P \) can be used to turn an \( \varmathbb{R}^{\nu} \)-valued \( W^{s,p} \) map into an \( \mathcal{N} \)-valued \( W^{s,p} \) map.
	Devised by Hardt and Lin with roots in the work of Federer and Fleming, the method of projection is known to apply in \( W^{1,p} \) if and only if \( p < \ell \), and has been extended in some special cases to more general values of the regularity parameter \( s \).
	As a first result, we prove in full generality that, when \( s \geq 1 \), the method of projection can be applied in the whole expected range \( sp < \ell \).
	
	When \( 0 < s < 1 \), the method of projection was only known to be applicable when \( p < \ell \), a more stringent condition than \( sp  < \ell \).
	As a second result, we show that, somehow surprisingly, the condition \( p < \ell \) is optimal, by constructing, for every \( 0 < s < 1 \) and \( p  \geq \ell \), a bounded \( W^{s,p} \) map into \( \varmathbb{R}^{\ell} \) whose singular projections onto the sphere \( \varmathbb{S}^{\ell-1} \) all fail to belong to \( W^{s,p} \).
	
	As a byproduct of our method, a similar conclusion is obtained for the closely related method of almost retraction, devised by Haj\l asz, for which we also prove a more stringent threshold of applicability when \( 0 < s < 1 \).
\end{abstract}

\tableofcontents


\section{Introduction}
\label{sect:intro}

This paper is concerned with the implementation of the so-called \emph{method of singular projection} in the context of Sobolev spaces of mappings with values into a compact manifold.
More precisely, we let \( \mM \) and \( \mN \) be compact Riemannian manifolds, where \( \mM \) has dimension \( m \) and may have a boundary, and \( \mN \) is isometrically embedded into \( \bR^{\nu} \).
This latter assumption is without loss of generality by virtue of the Nash isometric embedding theorem~\cite{Nash1954,Nash1956}.
We work in the Sobolev space \( W^{s,p}\FromTo{\mM}{\mN} \) consisting of all maps \( u \in W^{s,p}\FromTo{\mM}{\bR^{\nu}} \) such that \( u\farg{x} \in \mN \) for almost every \( x \in \mM \).
This in particular comprises the case where the domain is a smooth bounded open subset \( \Omega \subset \bR^{m} \), by letting \( \mM = \overline{\Omega} \).

Although \( W^{s,p}\FromTo{\mM}{\mN} \) is a metric subspace of the linear Sobolev space \( W^{s,p}\FromTo{\mM}{\bR^{\nu}} \), the nonlinear constraint imposed by \( \mN \) prevents it from being a linear space.
Even more dramatically, most of the linear constructions that are usual when working with classical Sobolev spaces \emph{cannot} be implemented as such in the presence of the manifold constraint.

As a model example for our discussion, let us consider the \emph{extension of traces} problem.
It is well-known~\cite{Gagliardo1957} that every \( u \in W^{1-1/p,p}\FromTo{\mM}{} \) is the trace of a \( W^{1,p} \) function on \( \mM \times \ooInterval{0}{1} \), that is, \( u \) can be extended to a function \( U \in W^{1,p}\FromTo{\mM \times \ooInterval{0}{1}}{} \), such that \( \tr_{\mM} U = u \).
We denote by \( \ball{r}{a}{m} \) the open ball with center \( a \) and radius \( r \) in \( \bR^{m} \), and we use the notation \( \ball{}{}{m} = \ball{1}{0}{m} \) for the unit ball.
The usual construction of the extension \( U \) relies on a convolution product, and essentially consists in defining \( U\farg{x,t} = \varphi_{t} \ast u\farg{x} \) in the Euclidean setting, where \( \varphi \in C^{\infty}\cs\FromTo{\ball{}{}{m}}{} \) is a standard mollifying kernel --- with some technical adaptations to deal with points near the boundary.
In particular, this construction shows that the extension may even be taken to be \emph{smooth} in \( \mM \times \ooInterval{0}{1} \).
However, the convolution product being merely an averaging process, even if \( u \in \mN \) almost everywhere on \( \mM \), \( U \) \emph{need not} take its values on \( \mN \) almost everywhere on \( \mM \times \ooInterval{0}{1} \).
Therefore, one cannot deduce that every \( u \in W^{1-1/p,p}\FromTo{\mM}{\mN} \) is the trace of a mapping \( U \in W^{1,p}\FromTo{\mM \times \ooInterval{0}{1}}{\mN} \) as a direct consequence of the linear result, and not even of its usual proof.

If there would exist a smooth map \( \upPi \colon \bR^{\nu} \to \mN \) such that \( \upPi\restrfun{\mN} = \id_{\mN} \), then one could bring back to the linear case relying on the continuity of the composition operator, as \( \upPi \circ U \) would provide a \( W^{1,p} \) extension of \( u \) on \( \mM \times \ooInterval{0}{1} \).
However, there is no hope for such a globally defined smooth retraction onto \( \mN \) in the general case: if \( \mN \) is a compact manifold without boundary, then it can be shown that such a map \( \upPi \) \emph{never exists}.
However, in some special situations, one may circumvent this issue by working instead with a \emph{singular projection}.

Let us illustrate the key idea on the model case where \( \mN = \bS^{\ell-1} \subset \bR^{\ell} \), with \( \ell \geq 2 \), following the work of Hardt and Lin~\cite{HardtLin1987}.
In this case, one may consider the map \( P \colon \bR^{\ell} \setminus \set{0}{} \to \bS^{\ell-1} \) defined by \( P\farg{x} = \frac{x}{\abs{x}} \), which satisfies \( P\restrfun{\bS^{\ell-1}} = \id_{\bS^{\ell-1}} \).
The map \( P \) is a retraction, singular at the origin, satisfying the important estimate
\begin{equation}
\label{eq:e8d1b2cd1149a161}
	\abs{\Dext^{j}P\farg{x}}
	\leq
	C_{j}\frac{1}{\abs{x}^{j}}
	\quad
	\text{for every \( x \in \bR^{\ell} \setminus \set{0}{} \),}
\end{equation}
where \( C_{j} > 0 \) is a constant depending only on \( j \).
A natural idea would be to conclude by considering the map \( P \circ U \).
However, this attempt faces two important issues:
\begin{enumerate*}[label=(\(\textup{\roman*}\))]
	\item it may happen that \( U \) takes the value \( 0 \) on a large set, in which case \( P \circ U \) would not be defined on this set;
	\item even if \( U \neq 0 \) almost everywhere, so that \( P \circ U \) is well-defined almost everywhere, one cannot apply standard results about the continuity of the composition operator to deduce that \( P \circ U \in W^{1,p} \), because of the singularity of \( P \) at \( 0 \).
\end{enumerate*}

To overcome these difficulties, one relies on the following ingenious \emph{averaging argument}, which takes its roots in the seminal work by Federer and Fleming~\cite{FedererFleming1960}.
Given \( a \in \ball{}{}{\ell} \), consider the map \( P \circ \parens{U-a} \).
In what follows, we assume for simplicity that \( \mM = \cball{}{}{\ell-1} \).
By Sard's lemma, for almost every \( a \in \ball{}{}{\ell} \), \( U^{-1}\farg{\set{a}{}} \) is a finite union of points in \( \ball{}{}{\ell-1} \times \ooInterval{0}{1} \), so that \( P \circ \parens{U-a} \) is defined almost everywhere.
Let us give an estimate of its weak derivative.
By the chain rule, we know that
\begin{equation}
\label{eq:f57bbd49ae63f6ed}
	\abs{\Dext{\parens{P \circ \parens{U-a}}}\farg{x,t}}
	\lesssim
	\frac{\Dext{U}\farg{x,t}}{\abs{U\farg{x,t}-a}}\text{.}
\end{equation}
The key idea behind the averaging argument is to first integrate with respect to \( a \): if \( p < \ell \), one has
\begin{multline}
\label{eq:480d8d2a5d5cd29c}
	\int_{\ball{}{}{\ell}} \abs{\Dext{\parens{P \circ \parens{U-a}}}\farg{x}}^{p} \intd{a}
	\lesssim
	\abs{\Dext{U}\farg{x,t}}^{p}\int_{\ball{}{}{\ell}}\frac{1}{\abs{U\farg{x,t}-a}^{p}}\intd{a}\\
	\leq
	\abs{\Dext{U}\farg{x,t}}^{p}\int_{\ball{2}{}{\ell}} \frac{1}{\abs{a}^{p}}\intd{a}
	\lesssim
	\abs{\Dext{U}\farg{x,t}}^{p}
	\text{,}
\end{multline}
where we have used the fact that \( \abs{U\farg{x,t}} \leq 1 \) for every \( \tuple{x,t} \in \ball{}{}{\ell-1} \times \ooInterval{0}{1} \).
Integrating in \( \tuple{x,t} \) and using Tonelli's theorem, we find
\[
	\int_{\ball{}{}{\ell}} \abs{\Dext{\parens{P \circ \parens{U-a}}}}_{L^{p}\FromTo{\ball{}{}{\ell-1}\times \ooInterval{0}{1}}{}}^{p} \intd{a}
	\lesssim
	\abs{\Dext{U}}_{L^{p}\FromTo{\ball{}{}{\ell-1}\times\ooInterval{0}{1}}{}}^{p}\text{.}
\]
This implies the existence of a measurable set \( A \subset \ball{}{}{\ell} \) of positive measure such that, for every \( a \in A \),
\[
	\abs{\Dext{\parens{P \circ \parens{U-a}}}}_{L^{p}\FromTo{\ball{}{}{\ell-1}\times \ooInterval{0}{1}}{}}^{p}
	\lesssim
	\abs{\Dext{U}}_{L^{p}\FromTo{\ball{}{}{\ell-1}\times\ooInterval{0}{1}}{}}^{p}\text{.}
\]
One concludes by observing that \( P\parens{\cdot-a}\restrfun{\bS^{\ell-1}} \) is a smooth diffeomorphism on \( \bS^{\ell-1} \), and hence \( \parens{P\parens{\cdot-a}^{-1}}\restrfun{\bS^{\ell-1}} \circ \parens{P \circ \parens{U-a}} \) is a \( W^{1,p}\FromTo{\ball{}{}{\ell-1} \times \ooInterval{0}{1}}{\bS^{\ell-1}} \) map whose trace on \( \ball{}{}{\ell} \) coincides with \( u \).

\bigskip

In the above example, the important features of the singular projection \( x \mapsto \frac{x}{\abs{x}} \) onto \( \bS^{\ell-1} \) were that it is smooth outside of a small singular set, and that the rate of blow-up of its derivatives when approaching the singular set is suitably controlled.
This motivates the following more general definition of a singular projection.

\begin{definition}
\label{def:singular_projection}
	Let \( \ell \in \set{2,\dotsc,\nu}{} \).
	An \emph{\( \ell \)\=/singular projection onto \( \mN \)} is a smooth map \( P \colon \bR^{\nu} \setminus \Sigma \to \mN \)
	such that \( P_{\vert \mN} = \id_{\mN} \) and 
	\[
	\abs{\Dext^{j}P\parens{x}} \leq C_{j}\frac{1}{\dist{\parens{x,\Sigma}}^{j}}
	\quad
	\text{for every \( x \in \bR^{\nu} \setminus \Sigma \) and \( j \in \bN_{\ast} \)}
	\]
	for some constant \( C_{j} > 0 \) depending on \( j \),
	where \( \Sigma \subset \bR^{\nu} \setminus \mN \) is  a union of closedly embedded \( \parens{\nu-\ell} \)-submanifolds of \( \bR^{\nu} \).
\end{definition}

The reader may wonder which target manifolds \( \mN \) do admit an \( \ell \)\=/singular projection for some \( \ell \).
A necessary an sufficient condition for the existence of a singular projection, depending only on the topology of \( \mN \), is given by Proposition~\ref{prop:charact_man_sing_proj}.
Other important properties of singular projections shall be recalled in Section~\ref{sect:singular_projections}.
For the moment, let us simply note that the map \( P \colon \bR^{\ell} \setminus \set{0}{} \to \bS^{\ell-1} \), \( P\farg{x} = \frac{x}{\abs{x}} \) defined above is indeed an \( \ell \)\=/singular projection, with singular set \( \Sigma =  \set{0}{} \).

The illustrative argument presented above can be repeated mutatis mutandis for a general singular projection, hence showing that the method of singular projection can be implemented in full generality in \( W^{1,p} \), as soon as \( \mN \) admits an \( \ell \)\=/singular projection and \( p < \ell \).
The argument may also easily be adapted to cover higher order spaces \( W^{k,p} \).
In this setting, the main difference is that the right-hand side of~\eqref{eq:f57bbd49ae63f6ed}, obtained via the chain rule, will involve a term of the form
\(
	\frac{1}{\dist{\farg{U\farg{x,t}-a,\Sigma}}^{k}}\text{,}
\)
so that, to ensure the finiteness of the last integral in the corresponding version of~\eqref{eq:480d8d2a5d5cd29c}, the assumption on the parameters will be \( kp < \ell \).
Similarly, for fractional order spaces, the expected assumption on the parameters to be satisfied to ensure the applicability of the method of singular projection is \( sp < \ell \).
Our first main result shows that this is indeed the case in the range \( s \geq 1 \).

\begin{theorem}
	\label{theorem:estimate_sgeq1}
	Let \( P \colon \bR^{\nu} \setminus \Sigma \to \mN \) be an \( \ell \)-singular projection.
	If \( s \geq 1 \) and \( sp < \ell \), then for every map \( u \in W^{s,p}\FromTo{\mM}{\bR^{\nu}} \cap L^{\infty}\FromTo{\mM}{\bR^{\nu}} \) and every \( \alpha > 0 \), we have
	\[
		\int_{\ball{\alpha}{}{\nu}} \abs{P \circ (u-a)}_{W^{s,p}\FromTo{\mM}{}}^{p}\intd{a}
		\leq
		C\norm{u}_{W^{s,p}\FromTo{\mM}{}}^{p}\text{,}
	\]
	for some constant \( C > 0 \) depending on \( s \), \( p \), \( \mM \), \( P \), \( \alpha \), and \( \norm{u}_{L^{\infty}\FromTo{\mM}{}} \).
\end{theorem}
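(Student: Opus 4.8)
The plan is to localise the estimate to Euclidean space, to dispose of the integer part of the regularity by the classical chain rule together with the averaging argument recalled above, and to concentrate the whole difficulty of the fractional part into one averaged estimate for the singular factors appearing in a Fa\`a di Bruno expansion. By a finite atlas, a subordinate partition of unity and a Sobolev extension operator one first reduces to the case where \( \mM \) is a bounded smooth domain \( \Omega \subset \bR^{m} \) and \( u \in W^{s,p}\FromTo{\Omega}{\bR^{\nu}} \cap L^{\infty} \); this reduction does not involve \( P \) and is routine, since multiplication by a fixed cut-off and composition with a fixed diffeomorphism are bounded on \( W^{s,p} \). By density one may moreover assume \( u \) smooth, so that for almost every \( a \) the map \( u - a \) is transverse to \( \Sigma \), \( P\circ\parens{u-a} \) is smooth off a submanifold of \( \Omega \) of codimension \( \ell \), and the manipulations below are legitimate. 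Write \( s = k+\sigma \) with \( k = \lfloor s\rfloor \geq 1 \) and \( \sigma \in \coInterval{0}{1} \); the assumption \( sp < \ell \) gives \( kp \leq sp < \ell \) and, when \( \sigma > 0 \), both \( \sigma p < \ell - kp \) and \( \sigma p < p \). Put \( M = \norm{u}_{L^{\infty}} \), so that \( u\farg{x} - a \) remains in a fixed bounded subset of \( \bR^{\nu} \) for \( x \in \Omega \) and \( a \in \ball{\alpha}{}{\nu} \).

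The core of the proof is the following averaged estimate. Fix \( \beta \in \ooInterval{\sigma p}{\min\farg{p,\ell - kp}} \), a nonempty interval by the above. Then for every \( j \in \set{1,\dotsc,k}{} \) and all \( b, b' \in \bR^{\nu} \) with \( \abs{b}, \abs{b'} \leq M \),
\[
	\int_{\ball{\alpha}{}{\nu}} \abs{\Dext^{j}P\farg{b-a}}^{p}\intd{a} \leq C
	\qquad\text{and}\qquad
	\int_{\ball{\alpha}{}{\nu}} \abs{\Dext^{j}P\farg{b-a} - \Dext^{j}P\farg{b'-a}}^{p}\intd{a} \leq C\abs{b-b'}^{\beta}\text{.}
\]
The first inequality is the local integrability of \( \dist{\farg{\cdot,\Sigma}}^{-jp} \), valid because \( jp < \ell \) and, \( \Sigma \) being a union of closedly embedded codimension-\( \ell \) submanifolds, \( \abs{\set{z\in\ball{\alpha+M}{}{\nu}}{\dist{\farg{z,\Sigma}}<\rho}}\leq C\rho^{\ell} \) for \( \rho \) in a bounded range. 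For the second inequality, split the \( a \)-integral according to whether \( \dist{\farg{b-a,\Sigma}} \) exceeds \( 2\abs{b-b'} \) or not: where it does, the segment from \( b-a \) to \( b'-a \) stays at distance \( \gtrsim \dist{\farg{b-a,\Sigma}} \) from \( \Sigma \), so the mean value inequality and \( \abs{\Dext^{j+1}P}\lesssim\dist{\farg{\cdot,\Sigma}}^{-(j+1)} \) bound the integrand by \( C\abs{b-b'}^{p}\dist{\farg{b-a,\Sigma}}^{-(j+1)p} \); where it does not, use the crude bound \( C\bigl(\dist{\farg{b-a,\Sigma}}^{-jp}+\dist{\farg{b'-a,\Sigma}}^{-jp}\bigr) \). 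Inserting these into the layer-cake formula together with the measure estimate produces in every case a power \( \abs{b-b'}^{\min(p,\ell-jp)} \), with at worst a logarithmic loss when \( (j+1)p=\ell \); since \( \beta < \min\farg{p,\ell-kp}\leq\min\farg{p,\ell-jp} \), this is \( \leq C\abs{b-b'}^{\beta} \).

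Granting this, the integer part is immediate: by the Fa\`a di Bruno formula, for \( 0\leq i\leq k \) the derivative \( \Dext^{i}\parens{P\circ\parens{u-a}} \) is a finite sum of terms \( \Dext^{j}P\farg{u-a}\,B\farg{\Dext^{\mu_{1}}u,\dotsc,\Dext^{\mu_{j}}u} \) with \( B \) multilinear, \( 1\leq j\leq i \) and \( \mu_{1}+\dotsb+\mu_{j}=i \) (the case \( i=0 \) being \( P\circ\parens{u-a} \), which is bounded because \( \mN \) is). Taking \( p \)-th powers, integrating first in \( a \) using the first part of the core estimate (legitimate since \( jp\leq kp<\ell \)), and then applying the Gagliardo--Nirenberg product inequality \( \norm{\prod_{r}\Dext^{\mu_{r}}u}_{L^{p}}\leq C\norm{u}_{W^{i,p}} \) (with \( C \) depending on \( M \)), one obtains \( \int_{\ball{\alpha}{}{\nu}}\norm{\Dext^{i}\parens{P\circ\parens{u-a}}}_{L^{p}}^{p}\intd{a}\leq C\norm{u}_{W^{s,p}}^{p} \). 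This already settles \( \sigma=0 \). When \( \sigma>0 \), it remains to bound \( \int_{\ball{\alpha}{}{\nu}}[\Dext^{k}\parens{P\circ\parens{u-a}}]_{W^{\sigma,p}}^{p}\intd{a} \). Fix one Fa\`a di Bruno term \( T_{a} \) as above with \( i=k \), write \( B_{x}=B\farg{\Dext^{\mu_{1}}u\farg{x},\dotsc,\Dext^{\mu_{j}}u\farg{x}} \), and telescope
\[
	T_{a}\farg{x}-T_{a}\farg{y} = \bigl(\Dext^{j}P\farg{u\farg{x}-a}-\Dext^{j}P\farg{u\farg{y}-a}\bigr)B_{x} + \Dext^{j}P\farg{u\farg{y}-a}\,\bigl(B_{x}-B_{y}\bigr)\text{.}
\]
Taking \( p \)-th powers over \( \abs{x-y}^{m+\sigma p} \), integrating in \( a \) first and using the core estimate: the first summand contributes at most \( C\int_{\Omega}\int_{\Omega}\abs{u\farg{x}-u\farg{y}}^{\beta}\prod_{r}\abs{\Dext^{\mu_{r}}u\farg{x}}^{p}\abs{x-y}^{-m-\sigma p}\intd{x}\intd{y} \); since \( \beta>\sigma p \) and \( u \) is bounded this reduces to the same integral with exponent \( \sigma p \) in place of \( \beta \), which is \( \leq C\norm{u}_{W^{s,p}}^{p} \) by Hölder's inequality, the Gagliardo--Nirenberg bound \( \prod_{r}\Dext^{\mu_{r}}u\in L^{(k+\sigma)p/k}(\Omega) \), and a Gagliardo--Nirenberg interpolation inequality controlling the fractional \( W^{\sigma,\cdot} \)-regularity of \( u \) at integrability \( (k+\sigma)p \) in terms of \( \norm{u}_{W^{k+\sigma,p}} \) and \( \norm{u}_{L^{\infty}} \) (valid on the bounded domain \( \Omega \), where one may trade down the integrability exponent gained by interpolation). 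The second summand contributes at most \( C[\prod_{r}\Dext^{\mu_{r}}u]_{\dot W^{\sigma,p}}^{p}\leq C\norm{u}_{W^{k+\sigma,p}}^{p} \) by a multilinear fractional Leibniz (Kato--Ponce) inequality. Summing over the finitely many terms completes the estimate, with \( C \) depending only on \( s \), \( p \), \( \mM \), \( P \), \( \alpha \), and \( M \).

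The main obstacle is the second half of the core estimate: producing, from the \( a \)-average of the difference of \( \Dext^{j}P\farg{\cdot-a} \), a power of \( \abs{b-b'} \) with exponent strictly greater than \( \sigma p \). This is genuinely delicate when \( j=k \): then \( (j+1)p=(k+1)p \) may exceed \( \ell \), so the naive mean value bound \( \abs{b-b'}\,\dist{\farg{b-a,\Sigma}}^{-(k+1)} \) is not \( a \)-integrable, and it is exactly the dichotomy on \( \dist{\farg{b-a,\Sigma}} \) versus \( \abs{b-b'} \) that recovers, after averaging, the exponent \( \min\farg{p,\ell-kp} \). The strict inequality \( \ell - kp > \sigma p \), that is \( sp<\ell \), is precisely what makes this exponent large enough to be fed back into the Gagliardo kernel; this is where the full range \( sp<\ell \), rather than the weaker \( kp<\ell \) already used for the integer part, becomes indispensable.
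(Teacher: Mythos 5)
Your overall architecture is genuinely different from the paper's: you integrate in \( a \) \emph{first}, establishing an averaged H\"older-type modulus \( \int_{B^{\nu}_{\alpha}}\abs{\Dext^{j}P\farg{b-a}-\Dext^{j}P\farg{b'-a}}^{p}\intd{a}\lesssim\abs{b-b'}^{\beta} \) with \( \sigma p<\beta<\min\parens{p,\ell-kp} \), whereas the paper keeps \( a \) until the very end and performs, for each fixed \( \tuple{x,a} \), a near/far splitting of the \( y \)\=/integral at a radius \( \rho \) chosen in terms of \( \dist\farg{u\farg{x}-a,\Sigma} \) and \( \maximal{\abs{\Dext u}^{p}} \), via Hedberg's lemma. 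Your core averaged estimate is correct as stated (the dichotomy \( \dist\farg{b-a,\Sigma}\geq 2\abs{b-b'} \) versus the complement, together with the tubular-neighbourhood measure bound, does yield the exponent \( \min\parens{p,\ell-jp} \) up to a harmless logarithm), and your treatment of the integer part and of the second, Leibniz-type summand agrees in substance with the paper's.

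The gap is in the first summand. After integrating in \( a \) you reach \( \int\int\abs{u\farg{x}-u\farg{y}}^{\beta}\prod_{r}\abs{\Dext^{\mu_{r}}u\farg{x}}^{p}\abs{x-y}^{-m-\sigma p}\intd{x}\intd{y} \), and you then ``reduce to the same integral with exponent \( \sigma p \) in place of \( \beta \)'' using boundedness of \( u \). This step destroys the estimate: the majorant so produced is \emph{infinite} for essentially every nonconstant \( u \) (take \( u \) affine and \( k=1 \): near the diagonal the integrand is comparable to \( \abs{x-y}^{-m} \), so the \( y \)\=/integral diverges logarithmically wherever the derivative factors do not vanish). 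Equivalently, the bound you would need --- that \( x\mapsto\int\abs{u\farg{x}-u\farg{y}}^{\sigma p}\abs{x-y}^{-m-\sigma p}\intd{y} \) lies in \( L^{s/\sigma} \) --- amounts to finiteness of a \( W^{1,\sigma p} \)\=/type Gagliardo energy, which is infinite; no combination of H\"older and Gagliardo--Nirenberg can then give \( C\norm{u}_{W^{s,p}}^{p} \). The strict gain \( \beta-\sigma p>0 \) is exactly what makes the double integral converge near the diagonal and must not be discarded; note also that even keeping \( \beta \), a pure kernel-splitting H\"older argument combined with Gagliardo--Nirenberg lands precisely on a borderline exponent, so some maximal-function/Hedberg mechanism is genuinely needed. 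The repair within your scheme is to keep \( \beta \) and, for each \( x \), split the \( y \)\=/integral at a radius \( \rho\farg{x} \): far away use \( \abs{u\farg{x}-u\farg{y}}\leq 2\norm{u}_{L^{\infty}} \), nearby use \( \abs{u\farg{x}-u\farg{y}}^{\beta}\leq\abs{x-y}^{\beta}\parens[\big]{\int_{0}^{1}\abs{\Dext u\farg{x+t\parens{y-x}}}^{p}\intd{t}}^{\beta/p} \) and Hedberg's lemma (the kernel is now subcritical thanks to \( \beta>\sigma p \)); optimizing \( \rho\farg{x} \) gives a bound of the form \( C\parens{\norm{u}_{L^{\infty}}}\parens{\maximal{\abs{\Dext u}^{p}}\farg{x}}^{\sigma} \), after which H\"older against \( \prod_{r}\abs{\Dext^{\mu_{r}}u}^{p}\in L^{s/k} \), the maximal function theorem and Gagliardo--Nirenberg conclude, exactly as in the paper's proof.
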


We observe that the assumption \( sp < \ell \) cannot be relaxed.
Indeed, in the model situation where \( \mM = \cball{}{}{\ell} \), \( \mN = \bS^{\ell-1} \), and \( P\farg{x} = \frac{x}{\abs{x}} \), if one considers the map \( u\farg{x} = x \), then it is easily seen that \( P \circ \parens{u-a} \) belongs to \( W^{s,p}\FromTo{\ball{}{}{\ell}}{} \) for \emph{no} \( a \in \ball{}{}{\ell} \) in case \( sp \geq \ell \).

When \( s \geq 1 \), Theorem~\ref{theorem:estimate_sgeq1} establishes the applicability of the method of singular projection in the full expected range.
This result is in the spirit of classical results concerning \emph{superposition operators}, see e.g.~\cite{BrezisMironescu2001} and the references therein, the main difference here being that the map with which we compose is allowed to have singularities.

The range \( 0 < s < 1 \), on the contrary, is source of additional difficulties.
Indeed, it is well-known to experts, and can easily be shown, for instance, by adapting the proof of Theorem~\ref{theorem:estimate_sgeq1}, that the method of singular projection can be implemented under the assumption that \( p < \ell \); see e.g.~\cite[Proof of Theorem~2.11]{Vincent2025} for a similar argument.
This assumption is \emph{more stringent} than the expected assumption \( sp < \ell \).

Two hints supporting the conjecture that the method of singular projection can be implemented whenever \( sp < \ell \) also when \( 0 < s < 1 \) are the following.
\begin{enumerate*}[label=(\(\textup{\roman*}\))]
	\item In many other problems related to Sobolev mappings to manifolds, the product \( sp \) is the right quantity to consider in the full range \( 0 < s < +\infty \).
	A typical example is given by the local obstruction to the strong density of smooth maps in \( W^{s,p}\FromTo{\mM}{\mN} \), which comes from the nontriviality of \( \pi_{\floor{p}}\FromTo{\mN}{} \) when \( s = 1 \)~\cite{SchoenUhlenbeck1983,Bethuel1991,HangLin2003II}, and from the nontriviality of \( \pi_{\floor{sp}}\FromTo{\mN}{} \) when \( 0 < s < 1 \)~\cite{BrezisMironescu2015}.
	\item If \( u \in W^{s,p}\FromTo{\ball{}{}{\ell}}{\ball{}{}{\ell}} \) is given by \( u\farg{x} = x \), which is the model example to show that the condition \( sp < \ell \) cannot be improved, then \( P \circ \parens{u-a} = \frac{u-a}{\abs{u-a}} \in W^{s,p}\FromTo{\ball{}{}{\ell}}{\bS^{\ell-1}} \) whenever \( sp < \ell \), and the corresponding estimate holds.
	This can be shown either by interpolation, see e.g.~\cite[Lemma~15.12]{BrezisMironescu2021}, or by a direct argument, see e.g.~\cite[Lemma~3.7]{VanSchaftingenOxfordNotes}.
	Although this is a very special Sobolev map, it could nevertheless be thought of as somehow representative of the general case, as any smooth map essentially looks like the identity locally on every affine space normal to one of its level sets.
\end{enumerate*}

Our second main result shows that, somehow surprisingly, the assumption \( p < \ell \) is \emph{optimal} in the range \( 0 < s < 1 \) to ensure the applicability of the method of singular projection, already in the model case of the projection \( P \colon \bR^{\ell} \setminus \set{0}{} \to \bS^{\ell-1} \) given by \( P\farg{x} = \frac{x}{\abs{x}} \).

\begin{theorem}
	\label{theorem:ctrex_sle1}
	Assume that \( 0 < s < 1 \) and \( 1 \leq p < +\infty \) are such that \( sp < \ell \) but \( p \geq \ell \), and let \( P \colon \bR^{\ell} \setminus \set{0}{} \to \bS^{\ell-1} \) be given by \( P\farg{x} = \frac{x}{\abs{x}} \).
	There exists a map \( u \in W^{s,p}\FromTo{\ball{}{}{\ell}}{\bR^{\ell}} \cap L^{\infty}\FromTo{\ball{}{}{\ell}}{\bR^{\ell}} \) such that \( P \circ \parens{u-a} \notin W^{s,p} \) for every \( a \in \ball{}{}{\ell} \).
\end{theorem}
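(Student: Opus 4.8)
The plan is to build \( u \) by gluing together countably many localized, highly oscillatory ``triangle waves'', indexed by a scale parameter \( k \in \bN_{\ast} \), so that \( u \) itself stays bounded and of class \( W^{s,p} \) while, for \emph{every} \( a \in \ball{}{}{\ell} \), infinitely many of these blocks each force a fixed positive contribution to the \( W^{s,p} \)\=/seminorm of \( \frac{u-a}{\abs{u-a}} \). The building block is the following. Fix a \( \bZ^{\ell} \)-periodic, piecewise-affine triangle wave \( \Psi \colon \bR^{\ell} \to \bR^{\ell} \), normalized so that each of its affine pieces is a dilation mapping onto the fixed cube \( \bracks{-1,1}^{\ell} \). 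Given a cube \( Q \subset \bR^{\ell} \) of side \( h < 1 \), a base value \( c \in \overline{\ball{}{}{\ell}} \), an amplitude \( \rho \) and a period \( \sigma \) with \( \sigma \ll \rho \ll h \), let \( u \) equal \( c + \rho\,\Psi(\cdot/\sigma) \) on the concentric half-size cube \( \tfrac{1}{2}Q \), let it interpolate smoothly on the collar \( Q \setminus \tfrac{1}{2}Q \) --- damping the oscillation and sliding the base value from \( c \) to a fixed point \( e_{1} \in \bS^{\ell-1} \), with the cut-off arranged so that \( \abs{u - e_{1}} \lesssim \dist(\cdot,\partial Q)/h \) near \( \partial Q \) --- and set \( u \equiv e_{1} \) outside \( Q \). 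Splitting the Gagliardo integral at \( \abs{x-y} = \sigma \) gives \( \norm{u}_{L^{\infty}(Q)} \lesssim 1 \) and \( \abs{u}_{W^{s,p}(Q)}^{p} \lesssim \rho^{p}h^{\ell}\sigma^{-sp} + h^{\ell - sp} \), the first term coming from the oscillation, the second from the base transition and the interaction with the exterior region \( \set{u = e_{1}}{} \) (including inter-block interactions once many cubes are present). Conversely --- and this is the point --- if \( a \) lies in the cube \( c + \tfrac{1}{2}\rho\bracks{-1,1}^{\ell} \), then on each of the \( \sim (h/\sigma)^{\ell} \) affine cells \( C \) filling \( \tfrac{1}{2}Q \), the map \( u - a \) is an affine dilation onto a cube containing \( B(0,\rho/2) \); the corresponding change of variables turns \( \abs{\frac{u-a}{\abs{u-a}}}_{W^{s,p}(C)}^{p} \) into \( (\sigma/\rho)^{\ell - sp} \) times the seminorm of \( P(x) = x/\abs{x} \) over that cube, which by \( 0 \)-homogeneity is at least \( (\rho/2)^{\ell - sp}\abs{P}_{W^{s,p}(\ball{}{}{\ell})}^{p} \); the factors of \( \rho \) cancel and, \emph{because \( sp < \ell \)}, \( \abs{P}_{W^{s,p}(\ball{}{}{\ell})} \) is a finite positive dimensional constant. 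Hence each such cell contributes \( \gtrsim \sigma^{\ell - sp} \), so \( \abs{\frac{u-a}{\abs{u-a}}}_{W^{s,p}(Q)}^{p} \gtrsim h^{\ell}\sigma^{-sp} \), with a constant independent of \( a, c, \rho, \sigma, h \).

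Next I would assemble the blocks. For each \( k \) set \( \rho_{k} = 2^{-k} \), pick a \( \tfrac{1}{8}\rho_{k} \)-net \( \set{c_{k,m}}{1 \leq m \leq M_{k}} \) of \( \overline{\ball{}{}{\ell}} \) with \( M_{k} \sim \rho_{k}^{-\ell} \), and set \( h_{k} = 2^{-\gamma k} \), \( \sigma_{k} = 2^{-\delta k} \), for parameters \( \gamma, \delta \) to be fixed. Place, pairwise disjoint inside \( \ball{}{}{\ell} \), a block cube \( Q_{k,m} \) of side \( h_{k} \) with data \( (c_{k,m},\rho_{k},\sigma_{k}) \) for every pair \( (k,m) \) (possible after one harmless common rescaling of the \( h_{k} \) and \( \sigma_{k} \), since \( \sum_{k}M_{k}h_{k}^{\ell} < \infty \)), and put \( u \equiv e_{1} \) off \( \bigcup_{k,m}Q_{k,m} \). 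Summing the block estimates, \( u \) is bounded and
\[
\abs{u}_{W^{s,p}(\ball{}{}{\ell})}^{p} \lesssim \sum_{k}M_{k}\bigl(\rho_{k}^{p}h_{k}^{\ell}\sigma_{k}^{-sp} + h_{k}^{\ell - sp}\bigr) = \sum_{k}\bigl(2^{k(\ell - p)}h_{k}^{\ell}\sigma_{k}^{-sp} + 2^{k\ell}h_{k}^{\ell - sp}\bigr)\text{,}
\]
which is finite provided \( \gamma > \ell/(\ell - sp) \) and \( \delta < (\gamma\ell + p - \ell)/(sp) \). On the other hand, for a \emph{fixed} \( a \in \ball{}{}{\ell} \) and each \( k \) there is \( m \) with \( \abs{c_{k,m} - a} \leq \tfrac{1}{8}\rho_{k} \), hence \( a \in c_{k,m} + \tfrac{1}{2}\rho_{k}\bracks{-1,1}^{\ell} \) and the block \( Q_{k,m} \) contributes \( \gtrsim h_{k}^{\ell}\sigma_{k}^{-sp} \); therefore
\[
\abs{\tfrac{u-a}{\abs{u-a}}}_{W^{s,p}(\ball{}{}{\ell})}^{p} \gtrsim \sum_{k}h_{k}^{\ell}\sigma_{k}^{-sp} = \sum_{k}2^{k(\delta sp - \gamma\ell)}\text{,}
\]
which diverges as soon as \( \delta \geq \gamma\ell/(sp) \).

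Everything thus reduces to choosing \( \gamma > \ell/(\ell - sp) \) and \( \delta \in \bracks{\gamma\ell/(sp),\,(\gamma\ell + p - \ell)/(sp)} \); this interval is nonempty precisely when \( p > \ell \), which is exactly where the hypothesis \( p \geq \ell \) enters. The conceptual obstacle, and the reason the threshold sits at \( p = \ell \), is that the \emph{same} series \( \sum_{k}h_{k}^{\ell}\sigma_{k}^{-sp} \) must diverge in order to ruin \( \frac{u-a}{\abs{u-a}} \), yet it occurs, weighted by \( M_{k}\rho_{k}^{p} \sim \rho_{k}^{p - \ell} \), inside the convergence requirement for \( \abs{u}_{W^{s,p}} \); since resolving all of \( \overline{\ball{}{}{\ell}} \) at every scale forces \( \rho_{k} \to 0 \), the two demands are compatible exactly when the weight \( \rho_{k}^{p - \ell} \) does not blow up, i.e.\ when \( p \geq \ell \). (For \( p < \ell \), the weight \( \rho_{k}^{p - \ell} \to +\infty \) forces \( \sum_{k}h_{k}^{\ell}\sigma_{k}^{-sp} < \infty \), in accordance with the known applicability of the projection method there.)

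The remaining work is mostly bookkeeping: carefully justifying the collar estimate (in particular that the exterior and inter-block interactions are \( \lesssim \sum_{k}M_{k}h_{k}^{\ell - sp} \), which is what forces the cut-off to satisfy \( \abs{u - e_{1}} \lesssim \dist(\cdot,\partial Q)/h \)) and the disjoint packing of the \( Q_{k,m} \); checking that \( u^{-1}(\set{a}{}) \) is Lebesgue\=/null for every \( a \), so that \( \frac{u-a}{\abs{u-a}} \) is at least defined a.e.; and, at the endpoint \( p = \ell \), where the open window for \( \delta \) collapses to a point, recovering a slowly divergent series by inserting logarithmic factors into the scales \( \rho_{k}, h_{k}, \sigma_{k} \). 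I expect the affine change\=/of\=/variables step delivering the uniform cellwise lower bound \( \abs{\frac{u-a}{\abs{u-a}}}_{W^{s,p}(C)}^{p} \gtrsim \sigma^{\ell - sp} \) to be the real crux, even though the dilation structure of the triangle wave is precisely what makes its constant genuinely uniform in \( a \), \( k \) and \( C \) once one has reduced to the single model map \( P = x/\abs{x} \).
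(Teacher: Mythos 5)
Your construction for the strict case \( p > \ell \) is, as far as I can check, sound, and it is a genuinely different route from the paper's: you build one explicit multi-scale map out of full-dimensional triangle-wave blocks and get the lower bound from an affine change of variables plus the \( 0 \)-homogeneity and scaling of \( x \mapsto x/\abs{x} \), whereas the paper uses two-valued patches \( c^{\pm} \), elementary planar-geometry estimates for \( \abs{P\farg{c^{+}-a}-P\farg{c^{-}-a}} \), and concludes abstractly via the nonlinear uniform boundedness principle (which also yields density of the bad maps). Your parameter window \( \gamma > \ell/(\ell-sp) \), \( \gamma\ell/(sp) \leq \delta < (\gamma\ell+p-\ell)/(sp) \) is computed correctly and is nonempty exactly when \( p > \ell \).

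The genuine gap is the endpoint \( p = \ell \), which the theorem asserts and which you dismiss as bookkeeping ("inserting logarithmic factors into the scales"). With your lower bound as written --- at each scale \( k \) you harvest only the \emph{single} block whose center is within \( \rho_{k}/8 \) of \( a \) --- no choice of scales, logarithmic or otherwise, can work: the divergence you need is \( \sum_{k} h_{k}^{\ell}\sigma_{k}^{-sp} = +\infty \), while membership of \( u \) in \( W^{s,p} \) forces \( \sum_{k} M_{k}\rho_{k}^{p} h_{k}^{\ell}\sigma_{k}^{-sp} < +\infty \); since covering the unit ball at resolution \( \rho_{k} \) forces \( M_{k} \gtrsim \rho_{k}^{-\ell} \), at \( p = \ell \) the weight \( M_{k}\rho_{k}^{p} \gtrsim 1 \), so the series you need to diverge is termwise dominated by a constant times the series you need to converge. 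This is a structural obstruction, not a tuning issue. The missing idea is exactly the paper's second geometric lemma and the summation over many centers: a block whose value-cube \( c_{k,m}-a+\rho_{k}\bracks{-1,1}^{\ell} \) sits at distance \( d \) from the origin still contributes \( \gtrsim h_{k}^{\ell}\sigma_{k}^{-sp}(\rho_{k}/d)^{p} \) (since \( P \) oscillates by \( \simeq \rho_{k}/d \) transversally over that cube), and there are \( \simeq (d/\rho_{k})^{\ell} \) such blocks in the dyadic shell at distance \( d \); at \( p=\ell \) each shell then contributes the same amount, and summing over the \( \simeq k \) shells gives the logarithmic gain \( k\,h_{k}^{\ell}\sigma_{k}^{-sp} \) that decouples the two series (choose, e.g., \( h_{k}^{\ell}\sigma_{k}^{-sp}\simeq 1/(k\log^{2}k) \)). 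Until you add such a multi-block estimate --- the analogue of the paper's Lemma~\ref{lemma:geom2} and of the cone summation \( \sum_{j} j^{\ell-1-p}\simeq\log \) in its proof --- your argument only proves the theorem for \( p>\ell \), not for \( p\geq\ell \).
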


The proof of Theorem~\ref{theorem:ctrex_sle1} shall be given in Section~\ref{sect:0lesle1}.
However, let us already briefly give an intuition about why the \( W^{s,p} \) seminorm with \( s < 1 \) is sensitive to the threshold \( p < \ell \) rather than \( sp < \ell \).
For this purpose, let us consider the composition of a \( W^{s,p} \) map \( u \) with a (globally defined, without singular set) map \( F \), with Lipschitz constant equal to \( L > 0 \).
A straightforward estimate on the numerator appearing in the Gagliardo seminorm shows that
\[
	\abs{F \circ u}_{W^{s,p}}^{p}
	\leq
	L^{p}\abs{u}_{W^{s,p}}^{p}\text{.}
\]
Moreover, this estimate is optimal, as can be seen by taking \( F \) to be an affine map.
Therefore, the power that appears in this estimate for the composition with a Lipschitz map is indeed \( p \), and not \( sp \).

This observation will be at the heart of our proof of Theorem~\ref{theorem:ctrex_sle1}.
Namely, we will start from the fact that, if a map \( u \) takes two values \( c^{+} \), \( c^{-} \in \ball{}{}{\ell} \) on two balls of fixed radius, and if \( c^{+} \) and \( c^{-} \) are close to the projection point \( a \), then this results in a multiplication of the contribution of the associated region to the Gagliardo seminorm by a factor \( \dist{\parens{c^{+},c^{-}}}^{-p} \).
Gluing together several such maps, for some suitably chosen pairs of points \( c^{+} \) and \( c^{-} \), one is then able to construct a \( W^{s,p} \) map whose energy will be dramatically increased after composition with a singular projection \emph{for any} choice of the point \( a \).
Making such a construction in a careful way and providing the associated estimates on how the singular projection increases the Sobolev energy then allows to conclude via a \emph{nonlinear uniform boundedness principle}; see~\cite{MonteilVanSchaftingen2019}.
We note that, as a byproduct of the use of this nonlinear uniform boundedness principle, we actually obtain that maps \( u \) as in the conclusion of Theorem~\ref{theorem:ctrex_sle1} are even dense in \( W^{s,p}\FromTo{\ball{}{}{\ell}}{\bR^{\ell}} \).

This idea can also be adapted to prove analogously a more stringent condition than expected for the applicability of another closely related method, namely the \emph{method of almost retraction}.
In order to keep this introduction focused on the method of singular projection, we postpone further explanations on this matter to the end of this text, in Section~\ref{sect:almost_projection}.

We conclude this introduction with a brief survey of known applications of the method of projection, to motivate its study.

\paragraph{Extension of traces.}

As we mentioned in our introductory example, the first use of the method of singular projection in the context of mappings to manifolds goes back to the work by Hardt and Lin~\cite{HardtLin1987}, where it was used to obtain extension of traces results preserving the manifold constraint.
In Section~\ref{sect:sgeq1}, we will show that, when \( s+\frac{1}{p} \geq 1 \), the existence of an \( \ell \)\=/singular projection onto \( \mN \) always implies that every map in \( W^{s,p}\FromTo{\partial\mM}{\mN} \) is the trace of a map in \( W^{s+\frac{1}{p},p}\FromTo{\mM}{\mN} \) as soon as \( sp+1 < \ell \) and \( s \) is not an integer; see Theorem~\ref{theorem:extension}.

Although not coming as a surprise, this result is, to the best of our knowledge, the first extension result in higher order Sobolev spaces of mappings, that is, when \( s > 1-\frac{1}{p} \).

In view of Proposition~\ref{prop:charact_man_sing_proj}, this approach applies to \( \floor{sp} \)\=/connected targets \( \mN \).
When \( s = 1-\frac{1}{p} \) --- and thus \( \floor{sp} = \floor{p-1} \) --- an important amount of research has been devoted to the study of the extension of traces problem for mappings into manifolds.
Hardt and Lin~\cite{HardtLin1987}, and Bethuel and Demengel~\cite{BethuelDemengel1995} have shown that the vanishing of \( \pi_{\floor{p-1}}\FromTo{\mN}{} \) is \emph{necessary} to ensure that the extension of traces holds, due to the presence of a topological obstruction.
It has also been shown by Bethuel~\cite{Bethuel2014} that, if \( \pi_{j}\FromTo{\mN}{} \) is \emph{infinite} for some \( 1 \leq j \leq \floor{p-1} \), then an analytical type obstruction to the extension of traces arises; see also the work of Mironescu and Van Schaftingen~\cite{MironescuVanSchaftingen2021Toul}.
This left open the case where \( \pi_{\floor{p-1}}\FromTo{\mN}{} = \set{0}{} \) and all \( \pi_{j}\FromTo{\mN}{} \) with \( 1 \leq j < \floor{p-1} \) are finite but at least one of them is nontrivial.
This missing case was solved recently by Van Schaftingen in a groundbreaking contribution~\cite{VanSchaftingen2024}, which showed that the two above mentioned obstructions are the \emph{only ones}: if \( \pi_{\floor{p-1}}\FromTo{\mN}{} = \set{0}{} \) and all \( \pi_{j}\FromTo{\mN}{} \) with \( 1 \leq j < \floor{p-1} \) are finite, then the extension of traces holds when \( \mM = \cball{}{}{m} \).
The case where \( \mM \) has nontrivial topology is more involved, as global topological obstructions may arise, and we refer the reader to~\cite{VanSchaftingen2024} for more details on this matter.

This raises the following open problem.
We state it for the extension from \( \mM \) to \( \mM \times \ooInterval{0}{1} \) to avoid having to deal with global topological obstructions that arise in the general case of an extension from \( \partial\mM \) to \( \mM \), which is outside of the scope of this short motivating example, and refer to \cite{VanSchaftingen2024} and the references therein for more details on this issue.

\begin{openproblem}
	Is it true that every \( u \in W^{s,p}\FromTo{\mM}{\mN} \) is the trace on \( \mM \times \set{0}{} \) of a map \( U \in W^{s+\frac{1}{p},p}\FromTo{\mM \times \ooInterval{0}{1}}{\mN} \) if and only if \( \pi_{\floor{sp}}\FromTo{\mN}{} = \set{0}{} \) and all \( \pi_{j}\FromTo{\mN}{} \) with \( 1 \leq j < \floor{sp} \) are finite in the whole range \( 0 < s < +\infty \) when \( s \) is not an integer?
\end{openproblem}

For topological reasons, the method of projection can only allow to settle the more restrictive case where \( \mN \) is \( \floor{sp} \)\=/connected.
We explain how to implement this approach in Theorem~\ref{theorem:extension} below.
Moreover, the case where \( s < 1-\frac{1}{p} \) remains completely open, even under this more restrictive assumption, due to the failure of the method of projection given by Theorem~\ref{theorem:ctrex_sle1}.

\paragraph{Strong density of almost smooth maps}

Another widespread use of the method of singular projection is related to the so-called \emph{strong density problem}.
A well-known striking fact when studying Sobolev spaces of mappings is that \( C^{\infty}\FromTo{\mM}{\mN} \) \emph{need not} be dense in \( W^{s,p}\FromTo{\mM}{\mN} \)~\cite{SchoenUhlenbeck1983}.
However, it has been established by Bethuel in the seminal contribution~\cite{Bethuel1991} in the case \( s = 1 \) --- see also the work of Hang and Lin \cite{HangLin2003II} --- and later on extended to the full range \( 0 < s < +\infty \)~\cite{BousquetPonceVanSchaftingen2015,BrezisMironescu2015,Detaille2023}, that one obtains a dense class in \( W^{s,p}\FromTo{\mM}{\mN} \) by considering instead the class of mappings \( u \in W^{s,p}\FromTo{\mM}{\mN} \) that are smooth outside of a finite union of \( \parens{m-\floor{sp}-1} \)\=/dimensional submanifolds of \( \mM \).

The proof of this fact is technically involved, but in the special situation where the target manifold \( \mN \) admits an \( \ell \)\=/singular projection with \( sp < \ell \), the proof can be considerably simplified by relying on the method of singular projection.
Such an idea was already present in the earlier work by Bethuel and Zheng~\cite{BethuelZheng1988}, and was later on implemented by Escobedo~\cite{Escobedo1988}, Rivière~\cite{Riviere2000}, Bourgain, Brezis, and Mironescu~\cite{BourgainBrezisMironescu2005}, Bousquet~\cite{Bousquet2007}, and Bousquet, Ponce, and Van Schaftingen~\cite{BousquetPonceVanSchaftingen2014}.
For closely related directions of research, see also the work by Haj\l asz~\cite{Hajlasz1994} \( s = 1 \), and its extension to \( s \geq 1 \)~\cite{BousquetPonceVanSchaftingen2013}, where a method of \emph{almost projection} was implemented; see Section~\ref{sect:almost_projection} for more details and a study of the case \( 0 < s < 1 \).
The full range of applicability of the method of singular projection in the study of the strong density problem was obtained in~\cite{Detaille2025}.

Theorem~\ref{theorem:estimate_sgeq1} dealing with the projection of \emph{one fixed} map \( u \in W^{s,p}\FromTo{\mM}{\mN} \), it cannot be used as such to obtain strong density results for almost smooth maps.
However, as we shall see in Section~\ref{sect:sgeq1}, the \emph{proof} of Theorem~\ref{theorem:estimate_sgeq1} can be readily adapted to yield a variant suited for sequences; see Theorem~\ref{theorem:conv_sgeq1}.
Using this variant, we shall revisit~\cite[Theorem~2.4]{Detaille2025} in the range \( s \geq 1 \); see Corollary~\ref{corollary:density_class_R}.

\bigskip

In the two previous problems, the method of projection was applied to functions that arise from mappings taking values into the target manifold \( \mN \), for instance via a convolution procedure.
However, there are some instances where one might be willing to project a more general function with values into the ambient space \( \bR^{\nu} \).
We briefly explain, with very few details, some such instances.

\paragraph{Weak density of smooth maps}

In~\cite{PakzadRiviere2003}, Pakzad and Rivière proved that, for a large class of target manifolds \( \mN \), any \( u \in W^{1,2}\FromTo{\mM}{\mN} \) can be \emph{weakly} approximated by smooth mappings.
The model situation in their work is the case where \( \mN \) is simply connected --- we note that, in this special situation, the weak density of smooth maps was already known via the \emph{method of almost projection} introduced by Haj\l asz~\cite{Hajlasz1994}, and over which we shall come back in Section~\ref{sect:almost_projection}.
In this case, a crucial step in their argument is to be able to project the map \( u \) to the \( 2 \)\=/dimensional skeleton \( \mN^{2} \) of \( \mN \), obtaining a map \( u^{2} \in W^{1,2}\FromTo{\mM}{\mN^{2}} \).
Then, they could rely on the fact that, for a simply connected compact manifold \( \mN \), the skeleton \( \mN^{2} \) has a quite simple topology: it is homotopically equivalent to a finite bouquet of spheres.

In this situation, similarly to the one studied here, there exists a map \( P \) that retracts almost all \( \mN \), with a singular set of codimension \( 3 \), to the \( 2 \)\=/skeleton \( \mN^{2} \).
The aforementioned map \( u^{2} \) is constructed by applying the counterpart of Theorem~\ref{theorem:estimate_sgeq1} in this setting (with \( s = 1 \)) to the map \( u \) with the singular retraction \( P \).

In view of Theorem~\ref{theorem:estimate_sgeq1}, there is hope that the approach by Pakzad and Rivière can be extended to obtain weak approximation by smooth maps in \( W^{s,p}\FromTo{\mM}{\mN} \) when \( sp = 2 \), \( s \geq 1 \), and \( \mN \) is, for instance, simply connected --- although we shall not attempt to carry on this task in the present work, as it would require a whole paper on its own.
However, Theorem~\ref{theorem:ctrex_sle1} suggests that, in the range \( 0 < s < 1 \), the approach would require substantial modifications.

To conclude this part with a precise question, we formulate the following open problem.

\begin{openproblem}
	Let \( 0 < s < 1 \) and \( 1 \leq p < +\infty \) be such that \( sp \in \bN_{\ast} \), and assume that \( \mN \) is \( \parens{sp-1} \)\=/connected.
	Is it true that every map \( u \in W^{s,p}\FromTo{\mM}{\mN} \) can be approximated with a sequence of smooth maps from \( \mM \) to \( \mN \)?
\end{openproblem}

The counterpart of this question for \( s \geq 1 \) is known to have a positive answer, obtained via the method of almost projection introduced by Haj\l asz \cite{Hajlasz1994} when \( s = 1 \), and pursed by Bousquet, Ponce, and Van Schaftingen for \( s \geq 1 \) \cite{BousquetPonceVanSchaftingen2013}.
The case \( 0 < s < 1 \) however faces similar difficulties as for the method of singular projection when attempting to implement this method of almost retraction.
We shall give more details on this matter in Section~\ref{sect:almost_projection}.

\paragraph{The singular set of Sobolev mappings}

As we already explained, even though smooth mappings are in general not dense in the space \( W^{s,p}\FromTo{\mM}{\mN} \), one nevertheless has at hand a convenient dense class provided by those \( W^{s,p}\FromTo{\mM}{\mN} \) that are smooth outside of a singular set of dimension \( m-\floor{sp}-1 \).
A question that has received a lot of interest is whether this singular set can be given a robust meaning so that it passes to the limit, allowing one to define the singular set of \emph{any} \( W^{s,p}\FromTo{\mM}{\mN} \) map.
The hope behind this endeavor is that the singular set \( S_{u} \) of a given map \( u \in W^{s,p}\FromTo{\mM}{\mN} \) would then encode the obstruction to approximate \( u \) by smooth maps, and be so that \( S_{u} = 0 \) if and only if \( u \) is a strong limit of smooth mappings.

Several directions of research have been pursued to attempt tackling this problem, mainly in the model case \( s = 1 \).
We notably mention the study of the \emph{Jacobian}, see e.g.~\cite{BrezisCoronLieb1986,Bethuel1990,BethuelCoronDemengelHelein1991,AlbertiBaldoOrlandi2003,BourgainBrezisMironescu2005,Bousquet2007,BousquetMironescu2014,Mucci2024}, the development of \emph{Cartesian currents} by Giaquinta, Modica, Sou\v{c}ek, and collaborators, culminating at the monograph~\cite{GiaquintaModicaSoucek1998,GiaquintaModicaSoucek1998II}, and the introduction of the notion of \emph{scans} by Hardt and Rivière~\cite{HardtRiviere2003,HardtRiviere2008}.

In~\cite{PakzadRiviere2003}, Pakzad and Rivière suggested to view the singular set of a Sobolev map as a \emph{flat chain} with values into the group \( \pi_{\floor{p}}\FromTo{\mN}{} \), using the language of \emph{geometric measure theory}.
More specifically, they were able to define a robust notion of singular set for any \( \mN \)\=/valued \( W^{1,p} \) map whenever the target is \( \parens{\floor{p}-1} \)\=/connected, under the extra assumption that either \( \floor{p} = 1 \) or \( \floor{p} = m-1 \), which was required because of their use of a result from geometric measure theory which is not available in full generality.

In~\cite{CanevariOrlandi2019}, Canevari and Orlandi removed this extra assumption and handled the case where \( \mN \) is \( \parens{\floor{p}-1} \)\=/connected in full generality.
The key idea at the core of their reasoning involves defining the singular set of \emph{any} map \( u \in W^{1,p}\FromTo{\mM}{\bR^{\nu}} \), \emph{non-necessarily \( \mN \)\=/valued}, by retracting \( u \) to \( \mN \) using the method of singular projection.
In particular, if \( u \) is already \( \mN \)\=/valued, their notion of singular set coincides with the one defined by Pakzad and Rivière in~\cite{PakzadRiviere2003}.

Therefore, this features another instance where it is crucial to be able to apply the method of singular projection to \emph{any} \( \bR^{\nu} \)\=/valued Sobolev map, not necessarily obtained from an \( \mN \)\=/valued map for example by convolution.

\paragraph{Regularity of harmonic maps}

Among the research fields which are tightly connected to the study of Sobolev mappings to manifolds and which make intensive use of the results and techniques that the latter provides, is the study of \emph{harmonic mappings}.
Such mappings are defined as minimizers of the Dirichlet energy among all mappings taking their values into a given target manifold.
Challenging questions in this area of research pertain to the uniqueness, regularity, or stability of the minimizers.
Here also, the method of projection has been used several times in a crucial way, notably to provide suitable competitors for the Dirichlet energy.

The typical situation is the following.
One first constructs a map, for instance by performing the harmonic extension of a manifold-valued map defined on the boundary of some ball, or by gluing together two manifold-valued maps via a cutoff procedure.
The map obtained this way need not be a competitor for a Dirichlet problem under constraint, as it need not take its values into the prescribed target.
One then has to correct this issue by projecting the aforementioned map back to the manifold through a singular projection, hence obtaining an admissible competitor.

Such an approach was implemented for instance by Hardt and Lin~\cite{HardtLin1987}, using a gluing by cutoff construction to obtain compactness of minimizers, or by Hardt, Kinderlehrer, and Lin~\cite{HardtKinderlehrerLin1986}, who obtained energy estimates for minimizers on balls relying on competitors built as projections of harmonic extensions from the boundary.
We also refer the reader to~\cite{MazowieckaMiskiewiczSchikorra2018} for a survey of such problems, and to~\cite{Gastel2016} for results in higher order spaces and~\cite{Vincent2025} for results in fractional Sobolev spaces.

\subsection*{Acknowledgements}


I am deeply grateful to Katarzyna Mazowiecka, Petru Mironescu, Augusto Ponce, and Akshara Vincent for many interesting discussions and comments all along the preparation of this paper, with special thanks to Katarzyna Mazowiecka for raising my attention to this question.
I thank the anonymous referee for many helpful remarks to improve the presentation and pedagogy of the text.

\section{Singular projections}
\label{sect:singular_projections}

In this short section, we collect some properties of singular projections that will be useful for us in the sequel.
We first recall the following characterization of those manifolds \( \mN \) that admit an \( \ell \)\=/singular projection.

\begin{proposition}
\label{prop:charact_man_sing_proj}
	The manifold \( \mN \) admits an \( \ell \)\=/singular projection if and only if it is \( \parens{\ell-2} \)\=/connected.
\end{proposition}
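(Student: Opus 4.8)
The plan is to prove the two implications separately. The forward implication --- that the existence of an \( \ell \)-singular projection forces \( (\ell-2) \)-connectedness --- is a short transversality argument; the converse, which is the substantive part, proceeds by an obstruction-theoretic extension over the skeleta of a fine triangulation, in the spirit of the Federer--Fleming projection.

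\emph{From a projection to \( (\ell-2) \)-connectedness.} Suppose \( P \colon \bR^{\nu}\setminus\Sigma\to\mN \) is an \( \ell \)-singular projection. Since \( \Sigma\subset\bR^{\nu}\setminus\mN \), the inclusion \( \iota\colon\mN\hookrightarrow\bR^{\nu}\setminus\Sigma \) is well defined and \( P\circ\iota=\id_{\mN} \), so \( \mN \) is a retract of \( \bR^{\nu}\setminus\Sigma \); in particular \( \iota_{\ast}\colon\pi_{k}(\mN)\to\pi_{k}(\bR^{\nu}\setminus\Sigma) \) is injective for every \( k \). It thus suffices to check that \( \bR^{\nu}\setminus\Sigma \) is \( (\ell-2) \)-connected. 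Let \( k\le\ell-2 \) and \( f\colon\bS^{k}\to\bR^{\nu}\setminus\Sigma \). As \( \bR^{\nu} \) is contractible, \( f \) extends to \( F\colon\cball{}{}{k+1}\to\bR^{\nu} \), and since \( \dim\cball{}{}{k+1}+\dim\Sigma\le(\ell-1)+(\nu-\ell)=\nu-1<\nu \), a perturbation of \( F \) relative to \( \bS^{k} \) misses \( \Sigma \); hence \( f \) is null-homotopic in \( \bR^{\nu}\setminus\Sigma \). The same count with \( k+1 \) replaced by \( 1 \) gives connectedness of \( \bR^{\nu}\setminus\Sigma \) (using \( \ell\ge2 \)). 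So \( \pi_{k}(\mN)=0 \) for \( 0\le k\le\ell-2 \). Only \( \dim\Sigma=\nu-\ell \) and \( P\restrfun{\mN}=\id_{\mN} \) are used here; the derivative bounds of Definition~\ref{def:singular_projection} play no role.

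\emph{From \( (\ell-2) \)-connectedness to a projection.} Assume \( \pi_{j}(\mN)=0 \) for \( 0\le j\le\ell-2 \). Regard \( \bR^{\nu} \) inside \( \bS^{\nu}=\bR^{\nu}\cup\{\infty\} \), fix nested tubular neighbourhoods \( V'\Subset V \) of \( \mN \) with smooth nearest-point retraction \( \rho\colon V\to\mN \), and set \( P:=\rho \) on \( \overline{V} \), which is smooth and has \( P\restrfun{\mN}=\id_{\mN} \). It remains to extend \( P \) over the compact manifold with boundary \( \bS^{\nu}\setminus V' \). Pick a smooth triangulation \( \mT \) of \( \bS^{\nu}\setminus V' \) having \( \infty \) as a vertex and of mesh so small that \( P=\rho \) is already defined on every simplex of \( \mT \) meeting \( \overline{V'} \). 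First extend \( P \) over the \( (\ell-1) \)-skeleton of \( \mT \), one simplex at a time: a \( j \)-simplex \( \sigma \) with \( j\le\ell-1 \) not yet treated carries a map \( \partial\sigma\cong\bS^{j-1}\to\mN \), which extends over \( \sigma \) exactly because \( \pi_{j-1}(\mN)=0 \). Then extend \( P \) over the remaining simplices by radial extension from their barycentres, in increasing order of dimension; this yields a continuous \( P \) on \( (\bS^{\nu}\setminus V')\setminus\Sigma \), where \( \Sigma \) is the \( (\nu-\ell) \)-skeleton of the cell decomposition dual to \( \mT \) --- equivalently, the union over the simplices \( \sigma \) of \( \mT \) with \( \dim\sigma\ge\ell \) of the cone from the barycentre \( \hat\sigma \) over the barycentres of the \( \ell \)-dimensional faces of \( \sigma \). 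Thus \( \Sigma \) is an \( (\nu-\ell) \)-dimensional polyhedron, of the form allowed by Definition~\ref{def:singular_projection} after a harmless perturbation; because \( P=\rho \) was prescribed on a neighbourhood of \( \overline{V'} \) no singular point is created there, so \( \Sigma\subset\bR^{\nu}\setminus\overline{V} \) is compact and disjoint from \( \mN \), and \( \infty\notin\Sigma \) since \( \infty \) is a vertex. Hence \( P \) restricts to a map \( \bR^{\nu}\setminus\Sigma\to\mN \) with \( P\restrfun{\mN}=\id_{\mN} \). Performing each extension above smoothly makes \( P \) smooth on \( \bR^{\nu}\setminus\Sigma \), and in coordinates transverse to a stratum of \( \Sigma \) the radial formula exhibits \( P \) as a function of the angular variable alone, so every derivative costs one negative power of \( \dist(\cdot,\Sigma) \), uniformly by compactness of \( \Sigma \); this is the estimate of Definition~\ref{def:singular_projection}.

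\emph{Where the difficulty lies.} The substantive direction is the second one, and two points there need care. The first is keeping the singular set away from \( \mN \): this is why one must impose \( P=\rho \) on a full collar of \( \mN \) (not merely on a hypersurface) before triangulating and extending --- this device, combined with the inductive radial extension, is exactly the Federer--Fleming projection mechanism, see \cite{FedererFleming1960,HardtLin1987}. The second is upgrading the continuous, piecewise-radial map produced by the extension to a genuinely smooth map satisfying the quantitative bounds of Definition~\ref{def:singular_projection}, which comes down to tracking the radial structure of \( P \) near each stratum of \( \Sigma \) and smoothing the finitely many transitions between adjacent cells. Finally, the role of \( (\ell-2) \)-connectedness is pinpointed by the skeletal step: \( \pi_{j-1}(\mN)=0 \) is precisely what permits the extension over \( j \)-simplices for all \( j\le\ell-1 \), and over no more, which is why the singular set ends up with codimension exactly \( \ell \).
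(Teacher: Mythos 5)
The paper does not prove this proposition in-house but defers to~\cite[Lemmas~2.2 and~2.3]{Detaille2025}, and your argument is exactly the standard one used there: necessity via the retract-plus-general-position dimension count, sufficiency via a Federer--Fleming-type extension of the nearest-point retraction over the skeleta of a fine triangulation, with homogeneous (radial) extension in dimensions \( \geq \ell \) producing the dual \( \parens{\nu-\ell} \)-skeleton as singular set. Your outline is sound; the points you assert rather than prove --- smoothing the skeletal extension so that the bounds \( \abs{\Dext^{j}P} \lesssim \dist\parens{\cdot,\Sigma}^{-j} \) actually hold, and adjusting the polyhedral dual skeleton so that \( \Sigma \) is literally a union of closedly embedded \( \parens{\nu-\ell} \)-submanifolds as Definition~\ref{def:singular_projection} requires --- are precisely the technical work carried out in the cited lemmas.
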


We refer the reader e.g.\ to~\cite[Lemmas~2.2 and~2.3]{Detaille2025} for a proof of this result.

We also recall the following lemma, which is a consequence of Sard's theorem and the submersion theorem, and which provides the structure of the singular set of the projection of a smooth map.

\begin{lemma}
\label{lemma:Sard}
	Let \( u \in C^{\infty}\FromTo{\mM}{\bR^{\nu}} \) and let \( \Sigma \) be a finite union of \( \parens{\nu-\ell} \)\=/submanifolds of \( \bR^{\nu} \).
	For almost every \( a \in \bR^{\nu} \), the set \( v^{-1}\farg{\Sigma+a} \) is a finite union of \( \parens{m-\ell} \)\=/submanifolds of \( \mM \), one for each submanifold constituting \( \Sigma \) --- or the empty set if \( \ell > m \).
\end{lemma}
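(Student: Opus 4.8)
The plan is to obtain the statement from the parametric transversality theorem, which is precisely the combination of Sard's theorem with the submersion (regular value) theorem alluded to in the statement. Write \( \Sigma = \Sigma_{1} \cup \dots \cup \Sigma_{k} \), where each \( \Sigma_{i} \) is a \( \parens{\nu-\ell} \)-submanifold of \( \bR^{\nu} \). Since \( u^{-1}\farg{\Sigma+a} = \bigcup_{i=1}^{k} u^{-1}\farg{\Sigma_{i}+a} \) and a finite intersection of sets of full measure still has full measure, it suffices to treat a single \( \parens{\nu-\ell} \)-submanifold \( N_{0} \) and to prove that, for almost every \( a \in \bR^{\nu} \), the set \( u^{-1}\farg{N_{0}+a} \) is an \( \parens{m-\ell} \)-submanifold of \( \mM \), empty when \( \ell > m \).

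The first step is to introduce the map \( F \colon \mM \times \bR^{\nu} \to \bR^{\nu} \) defined by \( F\farg{x,a} = u\farg{x}-a \). Its partial differential in the \( a \)-variable is \( -\Id_{\bR^{\nu}} \), so \( F \) is a submersion; in particular \( F \) is transverse to \( N_{0} \), and so is its restriction \( F\restrfun{\partial\mM \times \bR^{\nu}} \). By the submersion theorem, \( W \coloneqq F^{-1}\farg{N_{0}} \) is therefore a smooth submanifold of \( \mM \times \bR^{\nu} \), with boundary \( W \cap \parens{\partial\mM \times \bR^{\nu}} \), of the same codimension \( \ell \) as \( N_{0} \) in \( \bR^{\nu} \); in particular \( \dim W = m+\nu-\ell \).

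The crux is the elementary identification of the regular values of the projection \( \pi \colon W \to \bR^{\nu} \), restriction of the canonical projection \( \mM \times \bR^{\nu} \to \bR^{\nu} \), with the parameters \( a \) for which the corresponding slice is transverse to \( N_{0} \). Fix \( \parens{x,a} \in W \) and set \( y_{0} \coloneqq u\farg{x}-a \in N_{0} \). Since
\[
	T_{\parens{x,a}}W = \set{\parens{v,w} \in T_{x}\mM \times \bR^{\nu}}{\diffd u_{x}\parens{v}-w \in T_{y_{0}}N_{0}}\text{,}
\]
the differential \( \diffd\pi_{\parens{x,a}}\parens{v,w} = w \) is onto \( \bR^{\nu} \) if and only if every \( w \in \bR^{\nu} \) can be written \( w = \diffd u_{x}\parens{v} - \zeta \) with \( v \in T_{x}\mM \) and \( \zeta \in T_{y_{0}}N_{0} \), i.e.\ if and only if \( \diffd u_{x}\parens{T_{x}\mM} + T_{y_{0}}N_{0} = \bR^{\nu} \), which is exactly the assertion that \( u-a \) is transverse to \( N_{0} \) at \( x \). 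The same computation applies verbatim to \( \pi\restrfun{\partial W} \). Applying Sard's theorem to \( \pi \) and to \( \pi\restrfun{\partial W} \), almost every \( a \in \bR^{\nu} \) is a regular value of both; for such an \( a \), the map \( u-a \) is transverse to \( N_{0} \) on all of \( \mM \), so the submersion theorem gives once more that \( u^{-1}\farg{N_{0}+a} = \parens{u-a}^{-1}\farg{N_{0}} \) is a submanifold of \( \mM \) of codimension \( \ell \), hence of dimension \( m-\ell \) (with boundary contained in \( \partial\mM \)), and necessarily empty when \( \ell > m \) since a manifold cannot have negative dimension. Taking \( a \) in the intersection of the \( k \) full-measure sets obtained for \( \Sigma_{1},\dots,\Sigma_{k} \) yields the statement.

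I do not expect a genuine obstacle here; the only points requiring a little care are the presence of a boundary on \( \mM \) --- harmless, since \( F \) is a submersion everywhere and hence its boundary restriction is automatically transverse to \( N_{0} \) as well --- and the fact that Sard's theorem must be applied to the projection restricted to the manifold \( W \) (whose dimension may well be \( \geq \nu \), which is immaterial), not to \( u \) itself. If one prefers not to invoke the parametric transversality theorem as a black box, the three displayed observations above --- \( F \) is a submersion, the regular-value/transversality identity, and Sard's theorem --- already constitute a self-contained proof.
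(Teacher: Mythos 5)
Your proof is correct, and it implements precisely the strategy the paper announces (``a consequence of Sard's theorem and the submersion theorem''); the paper itself does not spell out an argument but defers to \cite[Lemma~2.7(i)]{Detaille2025}. The parametric-transversality route you take --- form the submersion \( F\farg{x,a}=u\farg{x}-a \), observe that \( W=F^{-1}\farg{N_{0}} \) is a manifold with boundary, identify regular values of the projection \( W\to\bR^{\nu} \) with the parameters \( a \) for which \( u-a \) is transverse to \( N_{0} \), and then invoke Sard for both \( \pi \) and \( \pi\restrfun{\partial W} \) --- is the standard self-contained proof and handles the boundary of \( \mM \) exactly as one should.
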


We refer the reader e.g.\ to~\cite[Lemma~2.7(i)]{Detaille2025} for a proof of this lemma.
The usefulness of Lemma~\ref{lemma:Sard} in our work is twofold.
First, we shall combine it with Lemma~\ref{lemma:small_sing_set_Sobolev} below to deduce the Sobolev regularity of the projection we construct in the case where the projected map is smooth.

\begin{lemma}
\label{lemma:small_sing_set_Sobolev}
	Let \( S \subset \mM \) be a closed set such that \( \mH^{m-1}\farg{S} = 0 \), where \( \mH^{m-1} \) denotes the \( \parens{m-1} \)\=/dimensional Hausdorff measure.
	If \( u \in C^{\infty}\FromTo{\mM \setminus S}{} \) is such that \( u \in L^{p}\FromTo{\mM}{} \) and \( \Dext{u} \in L^{p}\FromTo{\mM}{} \), then \( u \in W^{1,p}\FromTo{\mM}{}\).
\end{lemma}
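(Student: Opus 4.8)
The plan is to deduce the result from the ACL (absolutely-continuous-on-lines) characterization of Sobolev functions, the crucial geometric input being that a closed set of vanishing \( (m-1) \)-dimensional Hausdorff measure is met by almost no coordinate line. This is a classical removability statement, but it is easy to prove from scratch along these lines. Since \( W^{1,p} \) is a local notion and \( \mM \) is compact, I would first reduce to a Euclidean situation: cover \( \mM \) by finitely many bi-Lipschitz charts onto open cubes (or half-cubes near \( \partial\mM \)), observe that bi-Lipschitz maps preserve the vanishing of \( \mH^{m-1} \), and use a smooth partition of unity subordinate to this atlas to reduce to the following claim. If \( Q \subset \bR^{m} \) is an open cube, \( S \subset Q \) is relatively closed with \( \mH^{m-1}\farg{S} = 0 \), and \( u \in C^{\infty}\FromTo{Q \setminus S}{} \) satisfies \( u, \Dext u \in L^{p}\FromTo{Q}{} \), then \( u \in W^{1,p}\FromTo{Q}{} \), with weak gradient given by the classical one on \( Q \setminus S \).

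For the Euclidean claim, fix a coordinate direction \( e_{i} \) and let \( \pi_{i} \) denote the orthogonal projection onto the hyperplane \( e_{i}^{\perp} \). Being \( 1 \)-Lipschitz, \( \pi_{i} \) does not increase Hausdorff measure, so \( \mH^{m-1}\farg{\pi_{i}\farg{S}} = 0 \); since \( \mH^{m-1} \) coincides with Lebesgue measure on \( e_{i}^{\perp} \), the projection \( \pi_{i}\farg{S} \) is Lebesgue-negligible. Hence almost every line \( \ell \) parallel to \( e_{i} \) avoids \( S \) entirely, so that \( \ell \cap Q \) is an interval contained in the open set \( Q \setminus S \) on which \( u \) is smooth, in particular of class \( C^{1} \) with derivative \( \partial_{i}u \). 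Combining this with Fubini's theorem applied to \( \Dext u \in L^{1}\FromTo{Q}{} \), I obtain that on almost every line parallel to \( e_{i} \) the restriction of \( u \) is absolutely continuous, with an integrable derivative equal to \( \partial_{i}u \). Since this holds for every \( i \), the function \( u \) is ACL on \( Q \), with classical partial derivatives belonging to \( L^{p}\FromTo{Q}{} \).

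It then remains to promote this to the Sobolev property, which is the easy direction of the ACL characterization: for \( \varphi \in C^{\infty}\cs\FromTo{Q}{} \), integrating by parts along lines parallel to \( e_{i} \) — the one-dimensional fundamental theorem of calculus for absolutely continuous functions, with vanishing boundary terms since \( \varphi \) is compactly supported — and then integrating over the lines via Fubini, which is legitimate because \( u\,\partial_{i}\varphi \) and \( \parens{\partial_{i}u}\varphi \) lie in \( L^{1}\FromTo{Q}{} \), yields \( \int_{Q} u\,\partial_{i}\varphi = -\int_{Q}\parens{\partial_{i}u}\varphi \). Thus \( \partial_{i}u \) is the weak \( i \)-th partial derivative of \( u \), and since \( u \) and all \( \partial_{i}u \) belong to \( L^{p}\FromTo{Q}{} \) we conclude \( u \in W^{1,p}\FromTo{Q}{} \). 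Undoing the localization, and using that multiplication by the smooth cutoffs of the partition of unity preserves \( W^{1,p} \), finally gives \( u \in W^{1,p}\FromTo{\mM}{} \).

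The main point to get right — and essentially the only non-routine step — is the geometric one: it is essential that \( S \) is a \emph{single} set with \( \mH^{m-1}\farg{S} = 0 \), and not merely that each of its one-dimensional slices is negligible. Only the former guarantees, via the projection estimate, that almost every coordinate line misses \( S \) altogether, which is exactly what makes \( u \) smooth, hence absolutely continuous, along that line; the weaker slicewise statement would leave open the possibility that \( u \) genuinely fails to be absolutely continuous along every line. Everything else is a routine application of Fubini's theorem together with the ACL characterization of Sobolev spaces.
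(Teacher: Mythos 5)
Your proof is correct. The paper does not give its own argument for this lemma --- it simply refers to the literature (Ponce's book and Brezis--Mironescu) --- so there is nothing to match it against; what you wrote is a complete, self-contained proof along the classical ACL route. The one genuinely non-routine point is exactly the one you isolate: since the projection onto a coordinate hyperplane is \( 1 \)-Lipschitz, \( \mH^{m-1}\farg{S} = 0 \) forces \( \pi_{i}\farg{S} \) to be Lebesgue-negligible in \( e_{i}^{\perp} \), so almost every coordinate line misses \( S \) altogether; on such a line \( u \) is smooth, hence locally absolutely continuous, and Fubini applied to \( \Dext u \in L^{1} \) upgrades this to absolute continuity on the whole segment with integrable derivative, after which the line-by-line integration by parts and Fubini (legitimate since \( u\,\partial_{i}\varphi \) and \( \parens{\partial_{i}u}\varphi \) are integrable) give the weak-derivative identity. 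The localization through finitely many smooth charts and cutoffs, and the half-cube variant near \( \partial\mM \), are handled correctly, closedness of \( S \) being used only to ensure that the complement is open so that smoothness along the good lines makes sense. For comparison, a frequently seen alternative proof of such removability statements proceeds by covering \( S \) by balls with \( \sum_{i} r_{i}^{m-1} \) small and building cutoff functions vanishing near \( S \) with small gradient mass in \( L^{1} \); that route requires an extra truncation or integrability argument to control the cross term \( \int u\,\varphi\,\Dext\eta \), a complication which your ACL argument avoids entirely at the modest price of invoking the one-dimensional characterization of Sobolev functions.
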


We refer the reader e.g.\ to~\cite[Proposition~4.18]{Ponce2016}, or to~\cite[Lemma~1.10]{BrezisMironescu2021} and the references therein.

Second, Lemma~\ref{lemma:Sard} ensures that, when the projected map is smooth, then its projection belongs to a suitable class of almost smooth maps.
To state this more precisely, we recall the definition of the class \( \Rbase_{s,p}\FromTo{\mM}{\mN} \) as the class of those maps \( u \in W^{s,p}\FromTo{\mM}{\mN} \) that are smooth outside of a finite union of \( \parens{m-\floor{sp}-1} \)\=/dimensional submanifolds of \( \mM \).
Then, Lemma~\ref{lemma:Sard} ensures that, if \( u \) is smooth, then \( P \parens{u-a} \) belongs to the class \( \Rbase_{s,p} \) for almost every \( a \) as soon as it belongs to \( W^{s,p} \), provided that \( \ell = \floor{sp}+1 \).

We conclude this section with a last result ensuring that, for \( a \) sufficiently small, \( P \parens{\cdot-a} \) restricts to a smooth diffeomorphism on \( \mN \).
This property is crucial if we want our projection to preserve the values of the projected map which are already on the manifold.
More specifically, the map that we will be willing to consider is the map \( \parens{P \parens{\cdot-a}\restrfun{\mN}}^{-1} \circ P \circ \parens{u-a} \), which, unlike \( P \circ \parens{u-a} \), coincides with \( u \) wherever \( u \) takes its values into \( \mN \).

\begin{proposition}
\label{prop:Inv_trans_proj}
	Let \( P \colon \bR^{\nu} \setminus \Sigma \to \mN \) be an \( \ell \)\=/singular projection.
	There exists \( \varepsilon > 0 \) such that, for every \( a \in \bR^{\nu} \) satisfying \( \abs{a} \leq \varepsilon \), the map \( P\parens{\cdot-a}\restrfun{\mN} \colon \mN \to \mN \) is a smooth diffeomorphism.
\end{proposition}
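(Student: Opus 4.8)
\emph{Proof idea.}
The plan is to realise the restriction \( F_{a} := P\parens{\,\cdot\, - a}\restrfun{\mN} \) as a smooth self\=/map of \( \mN \) that is \( C^{1} \)\=/close to \( \id_{\mN} \) when \( \abs{a} \) is small, and then to promote its local invertibility to a global diffeomorphism by a compactness\=/and\=/topology argument. First I would pin down the geometry near \( \mN \). Since \( \mN \) is compact, \( \Sigma \) is closed in \( \bR^{\nu} \), and \( \Sigma \cap \mN = \emptyset \), we have \( \dist\parens{\mN,\Sigma} =: 3\delta > 0 \). Hence, for \( \abs{a} \leq \delta \) and \( x \in \mN \), every point \( x - ta \) with \( 0 \leq t \leq 1 \) lies in the region \( \set{y \in \bR^{\nu}}{\dist\parens{y,\Sigma} \geq 2\delta} \), on which Definition~\ref{def:singular_projection} provides the uniform bounds \( \abs{\Dext P} \leq \frac{C_{1}}{2\delta} \) and \( \abs{\Dext^{2}P} \leq \frac{C_{2}}{\parens{2\delta}^{2}} \). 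In particular \( x - a \notin \Sigma \), so \( F_{a}\parens{x} = P\parens{x-a} \in \mN \) is well defined, \( F_{a} \) is smooth, and \( F_{0} = P\restrfun{\mN} = \id_{\mN} \).

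Next I would quantify how close \( F_{a} \) is to the identity. Reading \( F_{a} \) through the embedding \( \mN \subset \bR^{\nu} \), the mean value inequality together with the bound on \( \Dext P \) gives \( \abs{F_{a}\parens{x}-x} = \abs{P\parens{x-a}-P\parens{x}} \leq \frac{C_{1}}{2\delta}\abs{a} \) for every \( x \in \mN \). Differentiating the relation \( P\parens{y}=y \), valid for \( y \in \mN \), along curves in \( \mN \) shows that \( \Dext P\parens{x}\bracks{v}=v \) for all \( v \in T_{x}\mN \); combined with the bound on \( \Dext^{2}P \) this yields \( \abs{\diffd F_{a}\restrfun{x}\bracks{v}-v} \leq \frac{C_{2}}{\parens{2\delta}^{2}}\abs{a}\,\abs{v} \). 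Therefore, once \( \abs{a} \) is small enough, \( \diffd F_{a}\restrfun{x} \) is an injective linear map from \( T_{x}\mN \) into \( T_{F_{a}\parens{x}}\mN \), and since both spaces have dimension \( m = \dim\mN \) it is an isomorphism; thus \( F_{a} \) is an immersion, hence a local diffeomorphism by the inverse function theorem.

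It remains to globalise. As \( \mN \) is compact, the local diffeomorphism \( F_{a} \) is proper, hence a covering map onto its image; the image is open (local diffeomorphisms are open maps) and closed (compactness), and \( \mN \) is connected because it carries an \( \ell \)\=/singular projection (Proposition~\ref{prop:charact_man_sing_proj}), so \( F_{a} \) is onto. To see that this covering has a single sheet, shrink \( \abs{a} \) further so that, by the \( C^{0} \) estimate above, for every \( x \in \mN \) and every \( t \in \bracks{0,1} \) the point \( \parens{1-t}x + tF_{a}\parens{x} \) lies in a tubular neighbourhood \( V \) of \( \mN \) carrying a smooth nearest\=/point retraction \( r \colon V \to \mN \). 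Then \( h\parens{x,t} = r\parens{\parens{1-t}x + tF_{a}\parens{x}} \) is a homotopy from \( \id_{\mN} \) to \( F_{a} \), so \( \parens{F_{a}}_{\ast} = \id \) on \( \pi_{1}\parens{\mN} \); consequently \( \parens{F_{a}}_{\ast}\pi_{1}\parens{\mN} = \pi_{1}\parens{\mN} \) has index one in \( \pi_{1}\parens{\mN} \), i.e.\ \( F_{a} \) is a one\=/sheeted covering. A one\=/sheeted covering is a bijection, and a bijective local diffeomorphism is a diffeomorphism, which is the desired conclusion; one then takes \( \varepsilon \) to be the smallest of the finitely many smallness thresholds on \( \abs{a} \) imposed along the way.

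I expect all of the local analysis to be routine book\=/keeping with the estimates of Definition~\ref{def:singular_projection}. The genuine content, and the main obstacle, is the passage from ``\( F_{a} \) is \( C^{1} \)\=/close to \( \id_{\mN} \)'' to ``\( F_{a} \) is a \emph{global} diffeomorphism'': the inverse function theorem only gives local invertibility, and ruling out a nontrivial finite\=/sheeted covering of \( \mN \) by itself requires the homotopy \( F_{a} \simeq \id_{\mN} \) (produced via the tubular neighbourhood) together with the covering\=/space dictionary — a mapping degree argument on each orientation double cover would serve equally well.
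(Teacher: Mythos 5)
Your proof is correct, and it reaches the conclusion by a more self-contained route than the paper. The shared core is the observation that \( F_{a} = P\parens{\cdot-a}\restrfun{\mN} \) is \( C^{1} \)\=/close to \( \id_{\mN} \) for \( \abs{a} \) small, which you justify carefully (positivity of \( \dist\parens{\mN,\Sigma} \), the bounds on \( \Dext P \) and \( \Dext^{2}P \) away from \( \Sigma \), and \( \Dext P\farg{x}\bracks{v}=v \) on \( T_{x}\mN \)); the paper leaves these estimates implicit. Where you diverge is the globalisation: the paper simply invokes the openness of diffeomorphisms in the \( C^{1} \) topology (Hirsch, Theorem~1.6), whereas you in effect reprove the relevant instance of that theorem by hand --- local diffeomorphism via the inverse function theorem, properness and open\=/plus\=/closed image to get a surjective covering of the connected manifold \( \mN \), and then the tubular\=/neighbourhood homotopy \( F_{a}\simeq\id_{\mN} \) to force the covering to be one\=/sheeted (the \( \pi_{1} \)\=/index argument; a degree argument would do as well, and for \( \ell\geq 3 \) simple connectedness makes it immediate). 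Your version buys explicit quantitative thresholds for \( \varepsilon \) and avoids the black box, at the cost of length; the paper's citation buys a two\=/line proof. Two cosmetic points: you write \( m=\dim\mN \), but in the paper \( m \) is reserved for \( \dim\mM \), so use a different symbol; and the claim that \( \Sigma \) is closed should be read off from the definition (closedly embedded submanifolds, finitely many as in Lemma~\ref{lemma:Sard}), which is what guarantees \( \dist\parens{\mN,\Sigma}>0 \).
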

\begin{proof}
	This is a direct consequence of the fact that diffeomorphisms form an open set in the \( C^{1} \) topology, see e.g.~\cite[Theorem~1.6]{Hirsch1976}, as \( P\parens{\cdot-a}\restrfun{\mN} \) is close to \( \id_{\mN} \) in the \( C^{1} \) topology whenever \( \abs{a} \) is sufficiently small.
\end{proof}

\section{The case \texorpdfstring{\( s \geq 1 \)}{s <= 1}}
\label{sect:sgeq1}

This section is devoted to the proof of our positive result in the range \( s \geq 1 \), namely Theorem~\ref{theorem:estimate_sgeq1}.
For this purpose, we shall need the following lemma for estimating the Gagliardo seminorm of a product.

\begin{lemma}
\label{lemma:Gagliardo_seminorm_product}
	Let \( i \in \bN_{\ast} \), \( 1 \leq p < +\infty \), and \( 0 < \sigma < 1 \).
	If for every \( \alpha \in \set{1,\dotsc,i}{} \), \( v_{\alpha} \in L^{q_{\alpha}}\FromTo{\bR^{m}}{} \) and \( \Dext{v_{\alpha}} \in L^{r_{\alpha}}\FromTo{\bR^{m}}{} \), where \( 1 < r_{\alpha} < q_{\alpha} \) and
	\[
		\frac{1-\sigma}{q_{\alpha}} + \frac{\sigma}{r_{\alpha}} + \sum_{\substack{\beta=1 \\ \beta \neq \alpha}}^{i}\frac{1}{q_{\beta}} = \frac{1}{p}\text{,}
	\]
	then
	\[
		\abs[\bigg]{\prod_{\alpha=1}^{i}v_{\alpha}}_{W^{\sigma,p}\FromTo{\bR^{m}}{}}
		\leq
		C\sum_{\alpha=1}^{i}\parens[\bigg]{\norm{v_{\alpha}}_{L^{q_{\alpha}}\FromTo{\bR^{m}}{}}^{1-\sigma}\norm{\Dext{v_{\alpha}}}_{L^{r_{\alpha}}\FromTo{\bR^{m}}{}}^{\sigma}\prod_{\substack{\beta=1 \\ \beta \neq \alpha}}^{i}\norm{v_{\beta}}_{L^{q_{\beta}}\FromTo{\bR^{m}}{}}}\text{,}
	\]
	for some constant \( C > 0 \) depending on \( m \), \( \sigma \), \( p \), the \( r_{\alpha} \), and the \( q_{\alpha} \).
\end{lemma}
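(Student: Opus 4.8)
The plan is to estimate the Gagliardo seminorm of the product directly from its integral definition, using a telescoping (discrete Leibniz) decomposition of the difference $\prod_\alpha v_\alpha(x) - \prod_\alpha v_\alpha(y)$ into $i$ terms, each isolating the increment of a single factor. First I would write, for any two points $x,y \in \bR^m$,
\[
	\prod_{\alpha=1}^{i} v_\alpha(x) - \prod_{\alpha=1}^{i} v_\alpha(y)
	= \sum_{\alpha=1}^{i} \parens[\bigg]{\prod_{\beta < \alpha} v_\beta(x)} \parens{v_\alpha(x) - v_\alpha(y)} \parens[\bigg]{\prod_{\beta > \alpha} v_\beta(y)}\text{,}
\]
so that by the triangle inequality the Gagliardo numerator $\abs{\prod_\alpha v_\alpha(x) - \prod_\alpha v_\alpha(y)}$ is controlled by a sum of $i$ products in which one factor is a difference and the others are pointwise values. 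It then suffices to bound, for each fixed $\alpha$, the seminorm-type quantity obtained by placing $\abs{\prod_{\beta<\alpha}v_\beta(x)}\,\abs{v_\alpha(x)-v_\alpha(y)}\,\abs{\prod_{\beta>\alpha}v_\beta(y)}$ into the double integral defining $\abs{\cdot}_{W^{\sigma,p}}$.

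For the $\alpha$-th term, I would apply a generalized Hölder inequality in the measure $\intd x \intd y / \abs{x-y}^{m+\sigma p}$ — or, more cleanly, first in $x$ and $y$ separately and then combine — splitting the $L^p$ norm into factors matching the exponents $q_\beta$ ($\beta \neq \alpha$) for the pointwise factors and a factor of exponent $p$ applied to $v_\alpha(x)-v_\alpha(y)/\abs{x-y}^{\sigma + m/p}$, which is exactly $\abs{v_\alpha}_{W^{\sigma,p}}$. The exponent-matching condition in the hypothesis, $\frac{1-\sigma}{q_\alpha} + \frac{\sigma}{r_\alpha} + \sum_{\beta\neq\alpha}\frac{1}{q_\beta} = \frac1p$, is precisely what makes this Hölder splitting close: the contributions $\frac{1}{q_\beta}$ account for the $i-1$ bounded factors, while the pair $\frac{1-\sigma}{q_\alpha} + \frac{\sigma}{r_\alpha}$ is the conjugate exponent governing the difference quotient of $v_\alpha$. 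The final ingredient is the fractional Gagliardo–Nirenberg (interpolation) inequality
\[
	\abs{v_\alpha}_{W^{\sigma,p'}\FromTo{\bR^m}{}}
	\leq
	C \norm{v_\alpha}_{L^{q_\alpha}\FromTo{\bR^m}{}}^{1-\sigma}\norm{\Dext v_\alpha}_{L^{r_\alpha}\FromTo{\bR^m}{}}^{\sigma}\text{,}
\]
valid for the appropriate exponent $p'$ with $\frac{1}{p'} = \frac{1-\sigma}{q_\alpha} + \frac{\sigma}{r_\alpha}$ and $r_\alpha > 1$; this is the standard interpolation between $L^{q_\alpha}$ and $\dot W^{1,r_\alpha}$ embedding into the fractional space $\dot W^{\sigma,p'}$, and it converts the mixed $L^p$-norm of the difference quotient into the product of $L^{q_\alpha}$ and $L^{r_\alpha}$ norms appearing on the right-hand side. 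Summing over $\alpha$ gives the claimed bound.

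The step I expect to be the main technical obstacle is the bookkeeping in the Hölder splitting of the double integral: one must simultaneously handle the $x$-variable (where $\prod_{\beta<\alpha}v_\beta$ and part of the $v_\alpha$-increment live) and the $y$-variable (where $\prod_{\beta>\alpha}v_\beta$ lives), while keeping the singular kernel $\abs{x-y}^{-(m+\sigma p)}$ attached to the $v_\alpha$ factor so that it reassembles into a genuine Gagliardo seminorm of $v_\alpha$ (of the right order $\sigma$ and integrability $p'$). A careful way to do this is to first estimate the inner integral $\int_{\bR^m} \abs{\cdot}^p \frac{\intd y}{\abs{x-y}^{m+\sigma p}}$ using Hölder in $y$ alone — pulling out $\norm{\prod_{\beta>\alpha}v_\beta(y)}$ in an appropriate $L^{s}$-norm via the exponents $q_\beta$, $\beta>\alpha$ — then take the outer $p$-th root and integrate in $x$, again by Hölder, against the remaining factors; the condition on the exponents guarantees the two rounds of Hölder are consistent. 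One should also record at the outset that each $v_\alpha$ lies in a fractional Sobolev space by the continuous embedding $W^{1,r_\alpha} \cap L^{q_\alpha} \hookrightarrow W^{\sigma,p'}$, so all quantities written are finite. Everything else is routine.
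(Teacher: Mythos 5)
The paper does not actually prove this lemma: right after the statement it cites \cite[Lemma~2.9]{BousquetPonceVanSchaftingen2013} and notes the result is implicit in \cite[Section~2]{MazyaShaposhnikova2002JEvolEqu}, so there is no internal proof to compare against. Your telescoping decomposition and the closing Gagliardo--Nirenberg interpolation step are both correct and are indeed the two outer pillars of the argument. However, the step you flag as the ``main technical obstacle'' --- separating the $x$-factor, the difference factor, and the $y$-factor by Hölder against the singular kernel --- is a genuine gap, not just bookkeeping, and the way you propose to resolve it does not close.

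Concretely, set $f(x) = \prod_{\beta<\alpha}v_\beta(x)$, $g(y) = \prod_{\beta>\alpha}v_\beta(y)$, and write the $\alpha$-th term as
\[
	T_\alpha = \int_{\bR^m}\int_{\bR^m} \frac{\abs{f(x)}^p\,\abs{v_\alpha(x)-v_\alpha(y)}^p\,\abs{g(y)}^p}{\abs{x-y}^{m+\sigma p}}\intd y\intd x.
\]
Applying Hölder in $y$ alone with conjugate exponents $r,r'$, as you suggest, pulls out $\norm{g}_{L^{pr'}}^p$ but leaves the factor $\parens[\big]{\int\abs{v_\alpha(x)-v_\alpha(y)}^{pr}\abs{x-y}^{-(m+\sigma p)r}\intd y}^{1/r}$; since $(m+\sigma p)r = m + \tilde\sigma\, pr$ only for $\tilde\sigma = \sigma + m(r-1)/(pr) > \sigma$, this reassembles after the second Hölder in $x$ into $\abs{v_\alpha}_{W^{\tilde\sigma,\,pr}}^p$ rather than $\abs{v_\alpha}_{W^{\sigma,p'}}^p$, and the interpolation inequality you invoke does not apply to that seminorm. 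Moreover, the second Hölder in $x$ only reassembles into a double integral if the outer exponent equals $r$, which forces $f$ and $g$ into the \emph{same} space $L^{pr'}$, i.e., it requires $\sum_{\beta<\alpha}1/q_\beta = \sum_{\beta>\alpha}1/q_\beta$ --- not part of the hypotheses. And the alternative of treating $\abs{x-y}^{-(m+\sigma p)}$ as a weight, or splitting the kernel between factors, leaves an integral of the type $\int\int\abs{f(x)}^s\abs{g(y)}^s\abs{x-y}^{-m}\intd y\intd x$, which diverges on the diagonal. The correct replacement for this step is not a Hölder inequality against the kernel but a near/far splitting of the $y$-integral at a pointwise-chosen radius $\rho(x)$: in the near region one uses $\abs{v_\alpha(x)-v_\alpha(y)}\leq\abs{x-y}\int_0^1\abs{\Dext v_\alpha(x+t(y-x))}\intd t$ and Hedberg's lemma to produce maximal functions evaluated at $x$, in the far region one uses the trivial bound $\abs{v_\alpha(x)-v_\alpha(y)}\leq\abs{v_\alpha(x)}+\abs{v_\alpha(y)}$, and one then optimizes over $\rho$; only after this does one apply Hölder in the single variable $x$ together with the maximal function theorem. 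This is precisely the machinery the paper deploys in its own proof of Theorem~\ref{theorem:estimate_sgeq1}, so you can cross-check the details there.
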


The above statement is taken from~\cite[Lemma~2.9]{BousquetPonceVanSchaftingen2013}, but the lemma was already proved implicitly in~\cite[Section~2]{MazyaShaposhnikova2002JEvolEqu}.

We now turn to the proof of Theorem~\ref{theorem:estimate_sgeq1}.
We mention importantly that our argument owes much to the proof of the continuity of the composition operator proposed by Maz'ya and Shaposhnikova~\cite{MazyaShaposhnikova2002JEvolEqu}, see also~\cite{BousquetPonceVanSchaftingen2013}.

\begin{proof}[Proof of Theorem~\ref{theorem:estimate_sgeq1}]
	We start by observing that it suffices to consider the case where the domain is \( \bR^{m} \).
	Indeed, if the domain is (the closure of) a smooth bounded open subset of \( \bR^{m} \), one may instead work with an extension of \( u \) to the whole \( \bR^{m} \).
	If the domain is more generally a compact manifold of dimension \( m \), one may localize the argument in a finite number of chart domains to return to the case of a smooth bounded open subset of \( \bR^{m} \).
	When \( s \in \bN_{\ast} \), the validity of this trick directly follows from the additivity of the integral.
	When \( s \) is not an integer, one should rely on a suitable substitute, see e.g.~\cite[Lemma~2.1]{Detaille2023}.
	Finally, by a standard approximation argument, we may assume that the map \( u \) under consideration is smooth.
	Indeed, as we are working with \( \bR^{\nu} \)\=/valued maps, we can then deduce the conclusion in the general case by approximating \( u \) with a sequence of smooth maps, extracting a subsequence converging almost everywhere, and using the lower-semicontinuity of the Sobolev seminorms combined with Fatou's lemma on the left-hand side of the estimate in the conclusion of the theorem.
	
	That being said, let us fix \( u \in \parens{W^{s,p} \cap L^{\infty} \cap C^{\infty}}\FromTo{\bR^{m}}{\bR^{\nu}} \), and write \( s = k + \sigma \), with \( k \in \bN \) and \( \sigma \in \coInterval{0}{1} \).
	The projected map \( P \circ \parens{u-a} \) is well-defined and smooth outside of \( u^{-1}\farg{\Sigma+a} \), which is a finite union of \( \parens{m-\ell} \)\=/submanifolds for almost every \( a \in \bR^{\nu} \), and hence in particular a null set as \( \ell \geq 1 \).
	In the rest of the proof, we shall implicitly restrict to the full measure sets of such values of \( a \) and \( x \) when writing pointwise estimates, for instance applying the Faà di Bruno formula.
	Moreover, when \( s \geq 1 \), we have \( \ell \geq 2 \), so that Lemma~\ref{lemma:small_sing_set_Sobolev} implies that it suffices to provide the Sobolev estimates for \( P \circ \parens{u-a} \), and this will automatically imply the Sobolev regularity and the fact that the weak derivatives coincide with the almost everywhere derivatives.
	
	We start with the case \( \sigma = 0 \), and thus \( s = k \in \bN_{\ast} \).
	The Faà di Bruno formula ensures that
	\begin{equation}
	\label{eq:ef572156ef556296}
		\abs{\Dext^{k} (P \circ (u-a))\farg{x}} 
		\lesssim
		\sum_{i=1}^{k}\sum_{\substack{1 \leq t_{1} \leq \cdots \leq t_{i} \\ t_{1} + \cdots + t_{i} = k}} \frac{1}{\dist{\farg{u\farg{x}-a,\Sigma}^{i}}}\abs{\Dext^{t_{1}}u(x)} \dotsb \abs{\Dext^{t_{i}}u\farg{x}}\text{.}
	\end{equation}
	Therefore, since \( ip \leq sp < \ell \), we deduce that
	\begin{equation}
		\label{eq:7c56cfc599ba65b1}
		\int_{B^{\nu}_{\alpha}} \abs{\Dext^{k} (P \circ (u-a))\farg{x}}^{p}\intd{a}
		\lesssim
		\sum_{i=1}^{k}\sum_{\substack{1 \leq t_{1} \leq \cdots \leq t_{i} \\ t_{1} + \cdots + t_{i} = k}} \abs{\Dext^{t_{1}}u(x)}^{p} \dotsb \abs{\Dext^{t_{i}}u\farg{x}}^{p}\text{.}
	\end{equation}
	Using Tonelli's theorem along with Hölder's inequality, we find
	\[
	\int_{B^{\nu}_{\alpha}}\parens[\bigg]{\int_{\bR^{m}}\abs{\Dext^{k} (P \circ (u-a))\farg{x}}^{p}}\intd{a}
	\lesssim
	\sum_{i=1}^{k}\sum_{\substack{1 \leq t_{1} \leq \cdots \leq t_{i} \\ t_{1} + \cdots + t_{i} = k}}\prod_{j = 1}^{i}\norm{\Dext^{t_{j}}u}_{L^{kp/t_{j}}\FromTo{\bR^{m}}{}}^{p}\text{.}
	\]
	Owing to the Gagliardo--Nirenberg inequality, we have \( u \in W^{t,\frac{kp}{t}}(\bR^{m}) \) for every \( 1 \leq t \leq k \) with
	\begin{equation*}
		\norm{\Dext^{t}u}_{L^{kp/t}\FromTo{\bR^{m}}{}}
		\lesssim
		\norm{u}_{L^{\infty}\FromTo{\bR^{m}}{}}^{1-\frac{t}{k}} \norm{u}_{W^{s,p}\FromTo{\bR^{m}}{}}^{\frac{t}{k}}\text{.}
	\end{equation*}
	We conclude that
	\[
	\int_{\ball{\alpha}{}{\nu}} \abs{P \circ (u-a)}_{W^{s,p}\FromTo{\bR^{m}}{}}^{p}\intd{a}
	\leq
	C\norm{u}_{W^{s,p}\FromTo{\bR^{m}}{}}^{p}\text{.}
	\]
	
	We now turn to the case \( 0 < \sigma < 1 \).
	By symmetry of the integrand in the Gagliardo seminorm with respect to \( x \) and \( y \), we only need to work on the region \( \set{\dist{\farg{u\farg{x}-a,\Sigma}} \leq \dist{\farg{u\farg{y}-a,\Sigma}}}{} \).
	We start by writing 
	\begin{multline}
	\label{eq:9ed9db683132939d}
		\abs{\Dext^{k}\parens{P \circ \parens{u-a}}\farg{x}-\Dext^{k}\parens{P \circ \parens{u-a}}\farg{y}}^{p} \\
		\lesssim
		\sum_{j=1}^{k}\sum_{\substack{1 \leq t_{1} \leq \cdots \leq t_{j} \\ t_{1} + \cdots + t_{j} = k}} \parens[\Big]{A_{j,t_{1},\dotsc,t_{j}}\farg{x,y,a}
		+
		B_{j,t_{1},\dotsc,t_{j}}\farg{x,y,a}}\text{,}
	\end{multline}
	where
	\begin{gather}
	\label{eq:2caebb62b1b4f646}
		\notag A_{j,t_{1},\dotsc,t_{j}}\farg{x,y,a}
		=
		\abs{\Dext^{j}P\farg{u\farg{x}-a}-\Dext^{j}P\farg{u\farg{y}-a}}^{p}\abs{\Dext^{t_{1}}u\farg{x}}^{p}\dotsb\abs{\Dext^{t_{1}}u\farg{x}}^{p} \\
		\intertext{and} 
		\begin{multlined}
			B_{j,t_{1},\dotsc,t_{j}}\farg{x,y,a} \\
			=
			\abs{\Dext^{j}P\farg{u\farg{y}-a}}^{p}\abs{\Dext^{t_{1}}u\farg{x} \otimes \dotsb \otimes \Dext^{t_{j}}u\farg{x} - \Dext^{t_{1}}u\farg{y} \otimes \dotsb \otimes \Dext^{t_{j}}u\farg{y}}^{p}\text{.}
		\end{multlined}
	\end{gather}
	
	We first handle the term containing \( A_{j,t_{1},\dotsc,t_{j}} \).
	Using~\cite[Lemma~3.13]{Detaille2025}, we estimate
	\begin{equation}
	\label{eq:ff7eec6a6ec04ece}
		\abs{\Dext^{j}P\farg{u\farg{x}-a}-\Dext^{j}P\farg{u\farg{y}-a}}
		\lesssim
		\frac{\abs{u\farg{x}-u\farg{y}}}{\dist{\farg{u\farg{x}-a,\Sigma}}^{j+1}}\text{.}
	\end{equation}
	On the other hand, by definition of singular projections,
	\begin{equation}
	\label{eq:72870f9447754116}
		\abs{\Dext^{j}P\farg{u\farg{x}-a}-\Dext^{j}P\farg{u\farg{y}-a}}
		\lesssim
		\frac{1}{\dist{\farg{u\farg{x}-a,\Sigma}}^{j}}\text{.}
	\end{equation}
	To estimate \( \int_{\ball{\alpha}{}{\nu}}\int_{\bR^{m}}\int_{\set{\dist{\farg{u\farg{x}-a,\Sigma}} \leq \dist{\farg{u\farg{y}-a,\Sigma}}}{}} \frac{A_{j,t_{1},\dotsc,t_{j}}\farg{x,y,a}}{\abs{x-y}^{m+\sigma p}} \intd{y}\intd{x}\intd{a} \), we split the integral with respect to \( y \) into two parts, one over the region \( \ball{\rho}{x}{m} \), and one over the region \( \bR^{m} \setminus \ball{\rho}{x}{m} \), with some \( \rho > 0 \) to be chosen later on.
	
	In the region \( \bR^{m} \setminus \ball{\rho}{x}{m} \), we rely on the straightforward estimate
	\begin{equation}
	\label{eq:57254e9b79390d2f}
		\int_{\bR^{m} \setminus \ball{\rho}{x}{m}} \frac{1}{\abs{x-y}^{m+\sigma p}}\intd{y}
		\lesssim
		\rho^{-\sigma p}\text{.}
	\end{equation}
	
	To estimate the integral over the region \( \ball{\rho}{x}{m} \), we start with the inequality
	\begin{equation}
	\label{eq:57254e9b79390d2g}
		\abs{u\farg{x}-u\farg{y}}^{p}
		\leq
		\abs{x-y}^{p}\int_{0}^{1}\abs{\Dext{u}\farg{x+t\parens{y-x}}}^{p}\intd{t}
		\quad
		\text{for almost every \( x \), \( y \in \bR^{m} \).}
	\end{equation}
	(This is straightforward when \( u \) is smooth, which is the case here.
	For a proof of the validity of~\eqref{eq:57254e9b79390d2g} for mere Sobolev maps, we refer the reader e.g.\ to~\cite[Proposition~1.4]{VanSchaftingenOxfordNotes}, which will turn useful in the proof of Theorem~\ref{theorem:conv_sgeq1} below.)
	With the change of variable \( h = y-x \), we obtain
	\begin{multline*}
		\int_{\ball{\rho}{x}{m}} \frac{\abs{u\farg{x}-u\farg{y}}^{p}}{\abs{x-y}^{m+\sigma p}}\intd{y}
		\leq
		\int_{0}^{1} \int_{\ball{\rho}{}{m}} \frac{\abs{\Dext{u}\farg{x+th}}^{p}}{\abs{h}^{m+(\sigma-1)p}}\intd{h}\intd{t} \\
		= 
		\int_{0}^{1} t^{(\sigma-1)p} \int_{\ball{t\rho}{x}{m}} \frac{\abs{\Dext{u}\farg{y}}^{p}}{\abs{x-y}^{m+(\sigma-1)p}}\intd{y}\intd{t}\text{.}
	\end{multline*}
	Hedberg's lemma~\cite{Hedberg1972} then ensures that
	\[
		\int_{\ball{\rho}{x}{m}} \frac{\abs{u\farg{x}-u\farg{y}}^{p}}{\abs{x-y}^{m+\sigma p}}\intd{y}
		\lesssim
		\int_{0}^{1} t^{(\sigma-1)p} (t\rho)^{(1-\sigma)p}\maximal{\abs{\Dext{u}}^{p}}\farg{x}\intd{t}
		=
		\rho^{(1-\sigma)p}\maximal{\abs{\Dext{u}}^{p}}\farg{x}\text{.}
	\]
	Choosing \( \rho = \dist{\farg{u\farg{x}-a,\Sigma}}\maximal{\abs{\Dext{u}}^{p}}\farg{x}^{-\frac{1}{p}} \), we conclude that
	\begin{multline}
	\label{eq:2905ae407e750248}
		\int_{\set{\dist{\farg{u\farg{x}-a,\Sigma}} \leq \dist{\farg{u\farg{y}-a,\Sigma}}}{}} \frac{A_{j,t_{1},\dotsc,t_{j}}\farg{x,y,a}}{\abs{x-y}^{m+\sigma p}} \intd{y} \\
		\begin{aligned}
			&\lesssim
			\parens[\bigg]{\rho^{-\sigma p}
			+
			\rho^{(1-\sigma)p}\maximal{\abs{\Dext{u}}^{p}}\farg{x}\frac{1}{\dist{\farg{u\farg{x}-a,\Sigma}}^{p}}}\frac{\abs{\Dext^{t_{1}}u\farg{x}}^{p} \cdots \abs{\Dext^{t_{j}}u\farg{x}}^{p}}{\dist{\farg{u\farg{x}-a,\Sigma}}^{jp}} \\
			&\lesssim
			\frac{1}{\dist{\farg{u\farg{x}-a,\Sigma}}^{sp}}\parens{\maximal{\abs{\Dext{u}}^{p}}\farg{x}}^{\sigma} \abs{\Dext^{t_{1}}u\farg{x}}^{p} \cdots \abs{\Dext^{t_{j}}u\farg{x}}^{p}\text{.}
		\end{aligned}
	\end{multline} 
	Here we have used the fact that \( \parens{j+\sigma}p \leq sp \) and the boundedness of \( u \).
	(The last hidden constant depends on the \( L^{\infty} \) norm of \( u \).)
	
	Using Tonelli's theorem and the fact that \( sp < \ell \), we deduce that
	\begin{multline*}
		\int_{\ball{\alpha}{}{\nu}}\int_{\bR^{m}}\int_{\set{\dist{\farg{u\farg{x}-a,\Sigma}} \leq \dist{\farg{u\farg{y}-a,\Sigma}}}{}} \frac{A_{j,t_{1},\dotsc,t_{j}}\farg{x,y,a}}{\abs{x-y}^{m+\sigma p}} \intd{y}\intd{x}\intd{a} \\
		\lesssim
		\int_{\bR^{m}} \parens{\maximal{\abs{\Dext{u}}^{p}}\farg{x}}^{\sigma} \abs{\Dext^{t_{1}}u\farg{x}}^{p} \cdots \abs{\Dext^{t_{j}}u\farg{x}}^{p}\intd{x}\text{.}
	\end{multline*}
	By Hölder's inequality, we obtain
	\[
		\int_{\bR^{m}} \parens{\maximal{\abs{\Dext{u}}^{p}}\farg{x}}^{\sigma} \abs{\Dext^{t_{1}}u\farg{x}}^{p} \cdots \abs{\Dext^{t_{j}}u\farg{x}}^{p}\intd{x}
		\leq
		\norm{\maximal{\abs{\Dext{u}}^{p}}}_{L^{s}\FromTo{\bR^{m}}{}}^{\sigma}\prod_{i=1}^{j}\norm{\Dext^{t_{i}}{u}}_{L^{sp/t_{i}}\FromTo{\bR^{m}}{}}^{p}\text{.}
	\]
	By virtue of the Gagliardo--Nirenberg inequality, we have \( \Dext^{t}u \in L^{sp/t}\FromTo{\bR^{m}}{} \) for every \( 1 \leq t \leq k \), with
	\begin{equation}
	\label{eq:ed0ef232cfebd49a}
		\abs{\Dext^{t}u}_{L^{sp/t}\FromTo{\bR^{m}}{}} \lesssim \norm{u}_{W^{s,p}(\FromTo{\bR^{m}}{}}^{\frac{t}{s}}\text{.}
	\end{equation} 
	In particular, since \( s > 1 \), the maximal function theorem implies that \( \maximal{\abs{\Dext{u}}^{p}} \in L^{s}\FromTo{\bR^{m}}{} \) with \( \norm{\maximal{\abs{\Dext{u}}^{p}}}_{L^{s}\FromTo{\bR^{m}}{}} \lesssim \norm{u}_{W^{s,p}\FromTo{\bR^{m}}{}}^{\frac{p}{s}} \).
	We conclude that 
	\[
		\int_{\bR^{m}} \parens{\maximal{\abs{\Dext{u}}^{p}}\farg{x}}^{\sigma} \abs{\Dext^{t_{1}}u\farg{x}}^{p} \cdots \abs{\Dext^{t_{j}}u\farg{x}}^{p}\intd{x}
		\lesssim
		\norm{u}_{W^{s,p}\FromTo{\bR^{m}}{}}^{p}\text{.}
	\]
	
	We now turn to the estimate of the second term in~\eqref{eq:9ed9db683132939d}, involving \( B_{j,t_{1},\dotsc,t_{j}}\farg{x,y,a} \).
	Using once again the assumption \( sp < \ell \) and Tonelli's theorem, we find 
	\begin{multline*}
		\int_{B^{\nu}_{\alpha}}\int_{\bR^{m}}\int_{\set{\dist{\farg{u\farg{x}-a,\Sigma}} \leq \dist{\farg{u\farg{y}-a,\Sigma}}}{}}\frac{B_{j,t_{1},\dotsc,t_{j}}\farg{x,y,a}}{\abs{x-y}^{m+\sigma p}}\intd{y}\intd{x}\intd{a} \\
		\lesssim
		\int_{\bR^{m}}\int_{\bR^{m}}\frac{\abs{\Dext^{t_{1}}u\farg{x} \otimes \dotsb \otimes \Dext^{t_{j}}u\farg{x} - \Dext^{t_{1}}u\farg{y} \otimes \dotsb \otimes \Dext^{t_{j}}u\farg{y}}^{p}}{\lvert x-y \rvert^{m+\sigma p}}\intd{y}\intd{x}\text{.}
	\end{multline*}
	The right-hand side above is nothing else but
	\(
		\abs{\Dext^{t_{1}}u \otimes \dotsb \otimes \Dext^{t_{j}}u}_{W^{\sigma,p}\FromTo{\Omega}{}}^{p}\text{.}
	\)
	We estimate it via Lemma~\ref{lemma:Gagliardo_seminorm_product}.
	For this purpose, we choose the \( v_{\alpha} \) to be the \( \Dext^{t_{i}}u \), the \( q_{\alpha} \) to be \( \frac{sp}{t_{\alpha}} \), the \( r_{\alpha} \) to be \( \frac{sp}{t_{\alpha+1}} \), and we deduce that
	\begin{multline*}
		\int_{\bR^{m}}\int_{\bR^{m}}\frac{\abs{\Dext^{t_{1}}u\farg{x} \otimes \dotsb \otimes \Dext^{t_{j}}u\farg{x} - \Dext^{t_{1}}u\farg{y} \otimes \dotsb \otimes \Dext^{t_{j}}u\farg{y}}^{p}}{\lvert x-y \rvert^{m+\sigma p}}\intd{y}\intd{x} \\
		\lesssim
		\sum_{i=1}^{j}\parens[\bigg]{\norm{\Dext^{t_{i}}{u}}_{L^{sp/t_{i}}\FromTo{\bR^{m}}{}}^{\parens{1-\sigma}p}\norm{\Dext^{t_{i}+1}{u}}_{L^{sp/\parens{t_{i}+1}}\FromTo{\bR^{m}}{}}^{\sigma p}\prod_{\substack{\beta=1 \\ \beta \neq i}}^{j}\norm{\Dext^{t_{\beta}}u}_{L^{sp/t_{\beta}}\FromTo{\bR^{m}}{}}^{p}}\text{.}
	\end{multline*}
	Invoking once more the Gagliardo--Nirenberg inequality~\eqref{eq:ed0ef232cfebd49a}, we deduce that
	\[
		\int_{\bR^{m}}\int_{\bR^{m}}\frac{\abs{\Dext^{t_{1}}u\farg{x} \otimes \dotsb \otimes \Dext^{t_{j}}u\farg{x} - \Dext^{t_{1}}u\farg{y} \otimes \dotsb \otimes \Dext^{t_{j}}u\farg{y}}^{p}}{\lvert x-y \rvert^{m+\sigma p}}\intd{y}\intd{x}
		\lesssim
		\norm{u}_{W^{s,p}\FromTo{\bR^{m}}{}}^{p}\text{,}
	\]
	which concludes the proof.
\end{proof}

We conclude this section by presenting two applications of Theorem~\ref{theorem:estimate_sgeq1}, as announced in the introduction.
First, we state the following result concerning the extension of traces of manifold-valued maps in the range \( s+\frac{1}{p} \geq 1 \).

\begin{theorem}
	\label{theorem:extension}
	Assume that \( \mN \) admits an \( \ell \)-singular projection \( P \colon \bR^{\nu} \setminus \Sigma \to \mN \).
	If \( s \notin \bN \) is such that \( s+\frac{1}{p} \geq 1 \) and if \( sp+1 < \ell \), then every map \( u \in W^{s,p}(\partial\mM;\mN) \) admits an extension \( U \in W^{s+\frac{1}{p},p}(\mM;\mN) \) such that \( \tr U = u \) on \( \partial\mM \) and
	\[
		\norm{U}_{W^{s+\frac{1}{p},p}\FromTo{\mM}{}}
		\leq
		\norm{u}_{W^{s,p}\FromTo{\partial\mM}{}}\text{,}
	\]
	for some constant \( C > 0 \) depending on \( s \), \( p \), \( \mM \), and \( \mN \).
	Moreover, the extension may be chosen to be smooth inside \( \Int{\mM} \) except on an \( \parens{m-\ell} \)\=/dimensional closedly embedded submanifold of \( \Int{\mM} \).
\end{theorem}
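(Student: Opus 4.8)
The plan is to reduce to the linear theory: first extend \( u \) as an \( \bR^{\nu} \)-valued map, then project the extension back onto \( \mN \) by \( P \) using Theorem~\ref{theorem:estimate_sgeq1}, and finally correct the boundary values with the diffeomorphism provided by Proposition~\ref{prop:Inv_trans_proj}. The one genuinely delicate point is identifying the trace of the projected extension.

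First I would invoke classical linear extension theory (Gagliardo~\cite{Gagliardo1957} when \( s+\frac{1}{p}=1 \), the higher order fractional trace theorem when \( s+\frac{1}{p}>1 \); here the hypothesis \( s \notin \bN \) guarantees that \( W^{s,p}\FromTo{\partial\mM}{} \) is exactly the trace space of \( W^{s+\frac{1}{p},p}\FromTo{\mM}{} \)) to produce, via a bounded linear operator applied to \( u \) regarded as \( \bR^{\nu} \)-valued, a map \( V \in W^{s+\frac{1}{p},p}\FromTo{\mM}{\bR^{\nu}} \) with \( \tr V = u \) and \( \norm{V}_{W^{s+\frac{1}{p},p}\FromTo{\mM}{}} \leq C\norm{u}_{W^{s,p}\FromTo{\partial\mM}{}} \). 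Since the standard construction is of convolution type, \( V \) may moreover be taken smooth in \( \Int\mM \) and bounded with \( \norm{V}_{L^{\infty}\FromTo{\mM}{}} \leq C\norm{u}_{L^{\infty}\FromTo{\partial\mM}{}} \), a quantity controlled by \( \mN \) alone. The crucial observation is that, because \( s+\frac{1}{p} \geq 1 \) and \( \parens{s+\frac{1}{p}}p = sp+1 < \ell \), the map \( V \) satisfies the hypotheses of Theorem~\ref{theorem:estimate_sgeq1} with regularity parameter \( s+\frac{1}{p} \). Fixing \( \varepsilon > 0 \) as in Proposition~\ref{prop:Inv_trans_proj} (shrunk so that \( \dist\parens{\mN,\Sigma-a} > 0 \) whenever \( \abs{a} \leq \varepsilon \)), that theorem yields \( \int_{\ball{\varepsilon}{}{\nu}} \abs{P \circ \parens{V-a}}_{W^{s+\frac{1}{p},p}\FromTo{\mM}{}}^{p}\intd{a} \leq C\norm{u}_{W^{s,p}\FromTo{\partial\mM}{}}^{p} \), with \( C \) depending only on \( s \), \( p \), \( \mM \), and \( \mN \). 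I would then select a shift \( a \) with \( \abs{a} \leq \varepsilon \) lying in the intersection of the resulting positive-measure set with the full-measure sets where \( V-a \notin \Sigma \) almost everywhere and where, by Lemma~\ref{lemma:Sard} applied to \( V\restrfun{\Int\mM} \), the set \( V^{-1}\farg{\Sigma+a} \cap \Int\mM \) is a finite union of closedly embedded \( \parens{m-\ell} \)-submanifolds of \( \Int\mM \) (empty if \( \ell > m \)). For such \( a \), the map \( P \circ \parens{V-a} \) is well defined almost everywhere, belongs to \( W^{s+\frac{1}{p},p}\FromTo{\mM}{\mN} \) (the \( L^{p} \) part of the norm being harmless as \( \mN \) is compact) with norm \( \lesssim \norm{u}_{W^{s,p}\FromTo{\partial\mM}{}} \), and is smooth in \( \Int\mM \) off the above submanifold.

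Next, since \( \abs{a} \leq \varepsilon \), Proposition~\ref{prop:Inv_trans_proj} makes \( \Theta \coloneqq P\parens{\cdot-a}\restrfun{\mN} \colon \mN \to \mN \) a smooth diffeomorphism, and I would set \( U \coloneqq \Theta^{-1} \circ \parens{P \circ \parens{V-a}} \). Composition with the smooth map \( \Theta^{-1} \) between the compact manifolds \( \mN \) preserves both \( W^{s+\frac{1}{p},p} \)-regularity and smoothness, so \( U \in W^{s+\frac{1}{p},p}\FromTo{\mM}{\mN} \), is smooth in \( \Int\mM \) away from an \( \parens{m-\ell} \)-dimensional closedly embedded submanifold, and satisfies the desired norm bound. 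It remains to check that \( \tr U = u \). As \( \Theta^{-1} \) is globally smooth, trace commutes with composition by it, so it suffices to establish the identity \( \tr\parens{P \circ \parens{V-a}} = P \circ \parens{u-a} \); indeed, \( u \) being \( \mN \)-valued gives \( P\parens{u-a} = \Theta \circ u \) almost everywhere on \( \partial\mM \), whence \( \tr U = \Theta^{-1} \circ \Theta \circ u = u \).

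The hard part will be this trace identity, since \( P \) is singular while \( V \) is only \( W^{s+\frac{1}{p},p} \) up to the boundary, so continuity of composition with \( P \) at \( \partial\mM \) is not automatic. I would handle it by approximation. Because \( \mN \) admits an \( \ell \)-singular projection it is \( \parens{\ell-2} \)-connected by Proposition~\ref{prop:charact_man_sing_proj}, hence \( \floor{sp} \)-connected (as \( sp < \ell-1 \)), so smooth \( \mN \)-valued maps are strongly dense in \( W^{s,p}\FromTo{\partial\mM}{\mN} \); pick \( u_{n} \in C^{\infty}\FromTo{\partial\mM}{\mN} \) with \( u_{n} \to u \) in \( W^{s,p}\FromTo{\partial\mM}{} \), and let \( V_{n} \) be the image of \( u_{n} \) under the same bounded extension operator. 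Then \( V_{n} \to V \) in \( W^{s+\frac{1}{p},p}\FromTo{\mM}{} \), each \( V_{n} \) is smooth up to \( \partial\mM \), the \( \norm{V_{n}}_{L^{\infty}} \) are uniformly bounded, and near \( \partial\mM \) the values of \( V_{n} \) approach those of \( u_{n} \in \mN \), hence stay away from \( \Sigma+a \) in a (possibly \( n \)-dependent) collar; there \( P \circ \parens{V_{n}-a} \) is continuous up to \( \partial\mM \), so that \( \tr\parens{P \circ \parens{V_{n}-a}} = P\parens{u_{n}-a} = \Theta \circ u_{n} \to \Theta \circ u = P\parens{u-a} \) in \( W^{s,p}\FromTo{\partial\mM}{} \), using continuity of composition with \( \Theta \). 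On the other hand, Theorem~\ref{theorem:estimate_sgeq1} together with Fatou's lemma shows that, for \( a \) in a further full-measure set, \( P \circ \parens{V_{n}-a} \) is bounded in \( W^{s+\frac{1}{p},p}\FromTo{\mM}{} \) along a subsequence; since moreover \( V_{n} \to V \) almost everywhere and \( V-a \notin \Sigma \) almost everywhere, \( P \circ \parens{V_{n}-a} \to P \circ \parens{V-a} \) almost everywhere, hence weakly in \( W^{s+\frac{1}{p},p}\FromTo{\mM}{} \), so by weak continuity of the trace \( \tr\parens{P \circ \parens{V_{n}-a}} \rightharpoonup \tr\parens{P \circ \parens{V-a}} \) in \( W^{s,p}\FromTo{\partial\mM}{} \). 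Matching the two limits gives \( \tr\parens{P \circ \parens{V-a}} = P \circ \parens{u-a} \), as required. (Alternatively, the sequential refinement Theorem~\ref{theorem:conv_sgeq1} of Theorem~\ref{theorem:estimate_sgeq1} yields directly that \( P \circ \parens{V_{n}-a} \to P \circ \parens{V-a} \) in \( W^{s+\frac{1}{p},p}\FromTo{\mM}{} \) for \( a \) in a positive-measure set of shifts.) Since all the finitely many ``good shift'' requirements meet in a set of positive measure, a valid \( a \) exists, and this completes the proof.
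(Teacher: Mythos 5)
Your proof follows exactly the same route as the paper's: linearly extend \( u \) to a bounded map \( V \) smooth in \( \Int\mM \), average over shifts \( a \) via Theorem~\ref{theorem:estimate_sgeq1} applied with regularity parameter \( s+\frac{1}{p}\geq 1 \) (so that \( \parens{s+\frac{1}{p}}p = sp+1 < \ell \)), pick a good \( a \), and undo the boundary perturbation with the diffeomorphism from Proposition~\ref{prop:Inv_trans_proj}. The one place where you go substantially beyond the paper --- whose proof is terse enough that its final displayed formula contains a typo, writing \( P\circ\parens{u-a} \) where \( P\circ\parens{v-a} \) is meant --- is your careful verification of the trace identity \( \tr\parens{P\circ\parens{V-a}}=P\circ\parens{u-a} \) via smooth approximation and (weak) continuity of the trace operator, together with Lemma~\ref{lemma:Sard} for the residual smoothness claim; the paper takes these for granted, so your added detail fills a real gap in the exposition while the underlying approach is identical.
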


In the range where \( s+\frac{1}{p} < 1 \), the conclusion of Theorem~\ref{theorem:extension} holds under the assumption that \( p < \ell \); see~\cite[Theorem~2.11]{Vincent2025} for a closely related result.
We note that this assumption is indeed more stringent, as then
\[
	sp+1
	=
	p\parens[\Big]{s+\frac{1}{p}}
	<
	p\text{.}
\]
It is an open question whether or not the conclusion of Theorem~\ref{theorem:extension} holds under the weaker assumption \( sp+1 \) in the whole range where \( 0 < s < +\infty \) is not an integer.
If \( \mN \) admits an \( \ell \)\=/singular projection, there is hope that this approach might still work, taking into account the fact that \( \bR^{\nu} \)\=/valued extensions can be constructed by convolution of an \( \mN \)\=/valued \( W^{s,p} \) map, even though it does not seem obvious how to exploit this information --- unlike for the approximation problem, see Corollary~\ref{corollary:density_class_R} and the comment that follows it.
The general case where the target admits no singular projection is completely open, even though there is a natural candidate for the necessary and sufficient condition for the extension of traces to hold, in the spirit of the case \( s = 1 - \frac{1}{p} \), as explained in the introduction.

\begin{proof}[Proof of Theorem~\ref{theorem:extension}]
	By the classical extension theory, we know that \( u \) admits a bounded extension \( v \in W^{s+\frac{1}{p},p}(\mM;\bR^{\nu}) \) such that \( \tr v = u \) on \( \partial\mM \), which is moreover smooth inside \( \Int{\mM} \) and satisfies
	\[
		\norm{v}_{W^{s+\frac{1}{p},p}\FromTo{\mM}{}}
		\lesssim
		\norm{u}_{W^{s,p}\FromTo{\partial\mM}{}}\text{.}
	\]
	Using Theorem~\ref{theorem:estimate_sgeq1}, we choose \( a \in B^{\nu}_{\alpha} \) such that \( P \circ(v-a) \in W^{s+\frac{1}{p},p}(\mM;\mN) \) and
	\[
		\norm{P \circ \parens{v-a}}_{W^{s+\frac{1}{p},p}\FromTo{\mM}{}}
		\lesssim
		\norm{v}_{W^{s+\frac{1}{p},p}\FromTo{\mM}{}}\text{.}
	\]
	If \( \alpha > 0 \) is chosen to be sufficiently small, then Proposition~\ref{prop:Inv_trans_proj} ensures that \( P\parens{\cdot-a}\restrfun{\mN} \) is a smooth diffeomorphism.
	The map \( U = \parens{P\parens{\cdot-a}\restrfun{\mN}}^{-1} \circ P \circ (u-a) \) satisfies the desired conclusion.
\end{proof}

In the higher order setting, this approach also allows to prescribe the trace of the derivatives.
To avoid excessive technicality, let us only sketch the case \( s = 2-\frac{1}{p} \).
We claim that the extension \( U \in W^{2,p}(\mM;\mN) \) of a map \( u \in W^{2-\frac{1}{p},p}(\partial\mM;\mN) \) can be chosen to further satisfy \( \tr \partial_{\textnormal{n}} U = u_{\textnormal{n}} \) for any given map \( u_{\textnormal{n}} \in W^{1-\frac{1}{p},p}\FromTo{\partial\mM}{\bR^{\nu}} \) such that \( u_{\textnormal{n}}\farg{x} \in T_{u\farg{x}}\mN \) for almost every \( x \in \partial\mM \), where \( T_{u\farg{x}}\mN \) denotes the tangent plane of \( \mN \) at the point \( u\farg{x} \) and \( \partial_{\textnormal{n}} \) the derivative in the direction normal to \( \partial\mM \).
In a more abstract fashion, the condition \( u_{\textnormal{n}}\farg{x} \in T_{u\farg{x}}\mN \) could be formulated by relying on the tangent bundle of \( \mN \).
However, unlike \( \mN \), \( T\mN \) is not compact, and hence the Sobolev space of mappings into \( T\mN \) depends on the choice of the embedding of \( T\mN \) into a Euclidean space as soon as \( s > 1 \).
We therefore avoid this issue by relying on the fact that, as \( \mN \) is embedded into \( \bR^{\nu} \), each of its tangent planes is also naturally embedded into \( \bR^{\nu} \). 

Let us explain how to adapt the proof of Theorem~\ref{theorem:extension} to obtain the additional conclusion on the trace of the extension.
By the classical linear theory, the extension \( v \in W^{2,p}(\mM;\bR^{\nu}) \) can be chosen to further satisfy \( \tr \partial_{\textnormal{n}} v = u_{\textnormal{n}} \).
But now, as
\[
	\Dext\parens{P\parens{z-a}\restrfun{\mN}}^{-1}
	=
	\parens{\Dext P\parens{z-a}\restrfun{\mN}}^{-1}
	\quad
	\text{on \( T_{z}\mN \),}
\]
it follows directly from the chain rule that the map \( U = \parens{P\parens{\cdot-a}\restrfun{\mN}}^{-1} \circ P \circ (u-a) \) satisfies the required additional conclusion.

Theorem~\ref{theorem:extension} and the previous remark provide a first partial answer to several questions stated notably in~\cite{MironescuVanSchaftingen2021Toul} concerning generalizations of the inverse trace theory for mappings into manifolds for \( s \neq 1 - \frac{1}{p} \), possibly with prescribed normal traces, although our results are restricted to the special situation where a singular projection is available, as already explained in the introduction.

\bigskip

Second, we explain how the proof of Theorem~\ref{theorem:estimate_sgeq1} can be adapted to yield a result suited for converging sequences, from which one may derive a strong density result for almost smooth maps.
More precisely, we prove the following theorem.

\begin{theorem}
\label{theorem:conv_sgeq1}
	Let \( P \colon \bR^{\nu} \setminus \Sigma \to \mN \) be an \( \ell \)\=/singular projection.
	If \( s \geq 1 \) and \( sp < \ell \), then for every map \( u \in W^{s,p}\FromTo{\mM}{\bR^{\nu}} \cap L^{\infty}\FromTo{\mM}{\bR^{\nu}} \), for every sequence \( \seq{u_{n}}{n \in \bN} \) that converges to \( u \) in \( W^{s,p}\FromTo{\mM}{\bR^{\nu}} \) and is uniformly bounded in \( L^{\infty}\FromTo{\mM}{\bR^{\nu}} \), and for every \( \alpha > 0 \), we have
	\[
		\int_{\ball{\alpha}{}{\nu}} \abs{P \circ \parens{u_{n}-a} - P \circ \parens{u-a}}_{W^{s,p}\FromTo{\mM}{}}^{p}\intd{a}
		\to 
		0
		\quad
		\text{as \( n \to +\infty \).}
	\]
\end{theorem}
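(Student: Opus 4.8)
The plan is to re-run the proof of Theorem~\ref{theorem:estimate_sgeq1} essentially verbatim, with \( P \circ \parens{u-a} \) replaced throughout by the difference \( P \circ \parens{u_{n}-a} - P \circ \parens{u-a} \), and to push the convergence \( u_{n} \to u \) in \( W^{s,p} \) through each of the estimates established there. As in that proof, after extending all the maps by a common linear extension operator and localising in charts, one reduces to \( \mM = \bR^{m} \), with \( \seq{u_{n}}{n} \) still converging to \( u \) in \( W^{s,p}\FromTo{\bR^{m}}{\bR^{\nu}} \) and uniformly bounded in \( L^{\infty} \). It is enough to show that every subsequence of \( \seq{u_{n}}{n} \) has a further subsequence along which the asserted integral tends to \( 0 \); passing to such a subsequence, I would assume moreover that \( u_{n} \to u \) and \( \Dext{u_{n}} \to \Dext{u} \) almost everywhere. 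Writing \( s = k+\sigma \) with \( k \in \bN \) and \( \sigma \in \coInterval{0}{1} \), the Gagliardo--Nirenberg inequality together with the \( n \)-uniform \( L^{\infty} \) bound also gives \( \Dext^{t}u_{n} \to \Dext^{t}u \) in \( L^{sp/t}\FromTo{\bR^{m}}{} \) for every \( 1 \leq t \leq k \), and in particular, when \( \sigma > 0 \), \( \abs{\Dext{u_{n}}}^{p} \to \abs{\Dext{u}}^{p} \) in \( L^{s} \) and hence \( \maximal{\abs{\Dext{u_{n}}}^{p}} \to \maximal{\abs{\Dext{u}}^{p}} \) in \( L^{s} \), using \( \abs{\maximal{f}-\maximal{g}} \leq \maximal{\abs{f-g}} \) and the \( L^{s} \)-boundedness of the maximal operator for \( s > 1 \).

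I would first treat the case \( \sigma = 0 \). By the Faà di Bruno formula, \( \Dext^{k}\parens{P \circ \parens{u_{n}-a}} \) is a finite sum of terms \( \Dext^{j}P\farg{u_{n}-a}\,\parens{\Dext^{t_{1}}u_{n} \otimes \dotsb \otimes \Dext^{t_{j}}u_{n}} \) with \( t_{1} + \dotsb + t_{j} = k \), and likewise with \( u \) in place of \( u_{n} \); subtracting and telescoping splits each such term into one factor carrying \( \Dext^{j}P\farg{u_{n}-a} - \Dext^{j}P\farg{u-a} \) times a product of derivatives of \( u_{n} \), and one carrying \( \Dext^{j}P\farg{u-a} \) times a difference of tensor products of derivatives. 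For the first, I would use, as in~\eqref{eq:ff7eec6a6ec04ece}--\eqref{eq:72870f9447754116} and~\cite[Lemma~3.13]{Detaille2025}, both the Lipschitz-type bound \( \abs{\Dext^{j}P\farg{u_{n}\farg{x}-a} - \Dext^{j}P\farg{u\farg{x}-a}} \lesssim \abs{u_{n}\farg{x}-u\farg{x}}\,d_{n}\farg{x,a}^{-j-1} \) and the crude bound \( \lesssim d_{n}\farg{x,a}^{-j} \), where \( d_{n}\farg{x,a} = \min\braces[\big]{\dist{\farg{u_{n}\farg{x}-a,\Sigma}},\dist{\farg{u\farg{x}-a,\Sigma}}} \): the first shows this factor tends to \( 0 \) almost everywhere, and the second, raised to the power \( p \) and integrated in \( a \) over \( \ball{\alpha}{}{\nu} \) (which is licit since \( jp \leq sp < \ell \)), gives a quantity that is bounded uniformly in \( n \) and converges, for almost every \( x \), to the corresponding quantity built from \( u \) alone, by continuity of \( c \mapsto \int_{\ball{\alpha}{}{\nu}} \dist{\farg{c-a,\Sigma}}^{-jp}\intd{a} \). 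Combined with \( \Dext^{t}u_{n} \to \Dext^{t}u \) in \( L^{sp/t} \), Tonelli's theorem and a generalized dominated convergence argument --- in which one integrates in \( a \) first --- this contribution vanishes in the limit. For the second factor, \( \abs{\Dext^{j}P\farg{u-a}} \lesssim \dist{\farg{u-a,\Sigma}}^{-j} \) is \( n \)-independent and \( a \)-integrable, while the tensor-product difference tends to \( 0 \) in the relevant \( L^{q} \) spaces by Gagliardo--Nirenberg; Hölder's inequality then closes the case \( \sigma = 0 \).

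For \( 0 < \sigma < 1 \), I would run the Gagliardo-seminorm estimate exactly as in the proof of Theorem~\ref{theorem:estimate_sgeq1}, now for \( \Dext^{k}\parens{P \circ \parens{u_{n}-a}} - \Dext^{k}\parens{P \circ \parens{u-a}} \): restrict to the region \( \set{\dist{\farg{u\farg{x}-a,\Sigma}} \leq \dist{\farg{u\farg{y}-a,\Sigma}}}{} \), decompose the numerator of the Gagliardo quotient into \( A \)-type and \( B \)-type contributions as in~\eqref{eq:9ed9db683132939d}--\eqref{eq:2caebb62b1b4f646}, and insert the telescoping above inside each of them. The \( A \)-type terms I would handle by the same splitting of the \( y \)-integral over \( \ball{\rho}{x}{m} \) and its complement, the pointwise inequality~\eqref{eq:57254e9b79390d2g}, Hedberg's lemma, and the optimal choice \( \rho = \dist{\farg{u\farg{x}-a,\Sigma}}\,\maximal{\abs{\Dext{u}}^{p}}\farg{x}^{-1/p} \); the \( B \)-type terms reduce, after integrating in \( a \), to Gagliardo seminorms of differences of tensor products of derivatives, to which Lemma~\ref{lemma:Gagliardo_seminorm_product} applies. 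Beyond the estimates already available from that proof, the new inputs needed are precisely the convergences collected at the end of the first paragraph (derivatives in \( L^{sp/t} \), maximal functions in \( L^{s} \)), the almost everywhere convergence of \( u_{n} \) and \( \Dext{u_{n}} \), and the \( n \)-uniform \( L^{\infty} \) and \( W^{s,p} \) bounds, which serve as dominating data. Feeding these into the estimates of Theorem~\ref{theorem:estimate_sgeq1} and invoking dominated convergence (in \( x \), \( a \), and \( t \)) shows that each of the finitely many terms tends to \( 0 \), which is the claim.

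The step I expect to be the main obstacle is the bookkeeping in this last, fractional, case: one must check that, after telescoping, \emph{every} term produced by the \( A \)- and \( B \)-type decomposition and by Hedberg's lemma is dominated by a function that is integrable in \( x \) and \( a \), with integral bounded uniformly in \( n \) by a constant depending only on the \( L^{\infty} \) and \( W^{s,p} \) bounds of the sequence, so that a (generalized) dominated convergence applies. The two delicate points are the simultaneous control of \( \maximal{\abs{\Dext{u_{n}}}^{p}} \) together with the mixed terms of the telescoping --- where one derivative factor is a difference \( \Dext^{t_{i}}u_{n} - \Dext^{t_{i}}u \), the remaining factors are derivatives of \( u_{n} \) or \( u \), and the projection factor is evaluated at \( u_{n}-a \) or \( u-a \) --- and the fact that the natural dominating function for the \( a \)-integration involves the \( n \)-dependent quantity \( \dist{\farg{u_{n}-a,\Sigma}}^{-jp} \), which is exactly what forces one to integrate in \( a \) first and to rely on the continuity of \( c \mapsto \int_{\ball{\alpha}{}{\nu}} \dist{\farg{c-a,\Sigma}}^{-jp}\intd{a} \) noted above. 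Everything else is a routine transcription of the already-established estimates.
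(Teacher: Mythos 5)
Your proposal follows essentially the same route as the paper's proof: reduce to $\bR^m$, pass to a subsequence along which one has a.e.\ convergence and domination via the Gagliardo--Nirenberg inequality, expand via Fa\`a di Bruno into the same $A$-type and $B$-type pieces (with the Lipschitz and crude bounds on $\Dext^{j}P$, Hedberg's lemma, and Lemma~\ref{lemma:Gagliardo_seminorm_product}), and conclude with convergence theorems applied first in $a$ and then in $x$. The only cosmetic difference is that where you telescope the difference $\Dext^{k}\parens{P\circ\parens{u_{n}-a}}-\Dext^{k}\parens{P\circ\parens{u-a}}$ and treat each mixed term by a generalized dominated convergence argument, the paper instead invokes the partial converse of the dominated convergence theorem to produce fixed dominating functions $g_{t}$ and then applies the Vitali convergence theorem directly to the sequence; these are interchangeable ways of packaging the same a.e.\ convergence and equi-integrability in $a$.
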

\begin{proof}
	The proof follows closely the strategy in the proof of Theorem~\ref{theorem:estimate_sgeq1}.
	To avoid repeating most of the proof, we only indicate the main steps of the argument, and make use of several intermediate steps of the proof of Theorem~\ref{theorem:estimate_sgeq1}.
	Moreover, it suffices to prove that the conclusion of the theorem holds up to extraction of a subsequence, since one may then conclude its validity along the whole sequence by applying the subsequence principle.
	Finally, as in Theorem~\ref{theorem:estimate_sgeq1}, it suffices to consider the case where the domain is \( \bR^{m} \).
	
	Unlike in the proof of Theorem~\ref{theorem:estimate_sgeq1}, we cannot assume \( u \) and the \( u_{n} \) to be smooth, as deducing the general case from this by approximation would imply exchanging two limits.
	However, \emph{once the validity of Theorem~\ref{theorem:estimate_sgeq1} is established}, we know in particular that \( P \circ \parens{u-a} \in W^{s,p} \) for almost every \( a \).
	Hence, for such \( a \), the derivatives of \( P \circ \parens{u-a} \) can be computed almost everywhere via the Faà di Bruno formula, so that we may follow the proof of Theorem~\ref{theorem:estimate_sgeq1} and obtain all the corresponding almost everywhere estimates \emph{without assuming a priori that \( u \) is smooth}.
	(The only exception is \eqref{eq:57254e9b79390d2g}, for which we already provided a justification if \( u \) is not smooth.)
	The only point of attention concerns the set of values such that \( u\farg{x}-a \in \Sigma \), that we can no longer prove to be a submanifold for almost every \( a \) via Sard's theorem if \( u \) is not smooth.
	However, we can still prove it to be a null set by a straightforward Tonelli-type argument.
	Indeed, the set of \( \parens{x,a} \) such that \( u\farg{x}-a \in \Sigma \) is measurable (with respect to the product measure) as the inverse image of a measurable set by a measurable function, and for a fixed \( x \), the corresponding set of values of \( a \) is a translate of \( \Sigma \), hence a null set.
	By Tonelli's theorem, for almost every \( a \), the set of \( x \) such that \( u\farg{x}-a \in \Sigma \) is therefore a null set.
	Summarizing the discussion, excluding a null set of values of \( a \) and then a null set of values of \( x \), we may justify all the pointwise estimates in the proof of Theorem~\ref{theorem:estimate_sgeq1}, even if \( u \) and the \( u_{n} \) are not assumed to be smooth.
	
	That being said, let us first assume that \( s = k \) is an integer.
	Combining the Gagliardo--Nirenberg inequality with the partial converse of the dominated convergence theorem, we find the existence of maps \( g_{t} \in L^{kp/t}\FromTo{\bR^{m}}{} \) such that, up to extraction of a subsequence, \( \abs{\Dext^{t}{u_{n}} } \leq g_{t} \) almost everywhere for every \( 1 \leq t \leq k \) and every \( n \in \bN \).
	This combined with~\eqref{eq:ef572156ef556296} and the assumption \( kp < \ell \) implies the uniform integrability of the sequence
	\(
		\seq{\Dext^{k}\parens{P \circ \parens{u_{n}-a}}\farg{x}}{n \in \bN}
	\)
	with respect to the variable \( a \).
	As this sequence converges almost everywhere, up to a further extraction, to \( \Dext^{k}\parens{P \circ \parens{u-a}}\farg{x} \), we deduce from the Vitali convergence theorem that
	\[
		\int_{\ball{\alpha}{}{\nu}}\abs{\Dext^{k}\parens{P \circ \parens{u_{n}-a}}\farg{x} - \Dext^{k}\parens{P \circ \parens{u-a}}\farg{x}}^{p}\intd{a}
		\to
		0
		\quad
		\text{for almost every \( x \in \bR^{m} \).}
	\]
	But now,~\eqref{eq:7c56cfc599ba65b1} combined with the domination \( \abs{\Dext^{t}{u_{n}} } \leq g_{t} \) allows to conclude by Lebesgue's dominated convergence theorem and Tonelli's theorem.
	
	Let us now assume that \( \sigma \in \ooInterval{0}{1} \), and follow the same strategy.
	Using again the partial converse of the dominated convergence theorem as well as the Gagliardo--Nirenberg inequality, we deduce from~\eqref{eq:2905ae407e750248} the uniform equi-integrability of the
	\[
		\int_{\set{\dist{\farg{u_{n}\farg{x}-a,\Sigma}} \leq \dist{\farg{u_{n}\farg{y}-a,\Sigma}}}{}} \frac{A_{j,t_{1},\dotsc,t_{j},n}\farg{x,y,a}}{\abs{x-y}^{m+\sigma p}} \intd{y}
	\]
	with respect to \( a \).
	Here, we have denoted by \( A_{j,t_{1},\dotsc,t_{j},n} \) the quantity \( A_{j,t_{1},\dotsc,t_{j}} \) associated to \( u_{n} \).
	Another application of the partial converse of the dominated convergence theorem and the assumption \( sp < \ell \) provides the uniform equi-integrability of the
	\[
		\int_{\set{\dist{\farg{u_{n}\farg{x}-a,\Sigma}} \leq \dist{\farg{u_{n}\farg{y}-a,\Sigma}}}{}} \frac{B_{j,t_{1},\dotsc,t_{j},n}\farg{x,y,a}}{\abs{x-y}^{m+\sigma p}} \intd{y}
	\]
	with respect to \( a \).
	Therefore, we are again in position to apply the Vitali convergence theorem to obtain
	\[
		\int_{\ball{\alpha}{}{\nu}}\int_{\set{\dist{\farg{u_{n}\farg{x}-a,\Sigma}} \leq \dist{\farg{u_{n}\farg{y}-a,\Sigma}}}{}}
		\frac{\abs{\Dext^{k}v_{n}\farg{x,a}-\Dext^{k}v_{n}\farg{y,a}+\Dext^{k}v\farg{x,a}-\Dext^{k}v\farg{y,a}}^{p}}{\abs{x-y}^{m+\sigma p}}\intd{y}\intd{a}
		\to
		0
	\]
	for almost every \( x \in \bR^{m} \), where \( v_{n}\farg{x,a} = P \circ \parens{u_{n}-a}\farg{x} \) and \( v\farg{x,a} = P \circ \parens{u-a}\farg{x} \).
	We conclude once again via the dominated convergence theorem.
\end{proof}

As a corollary, we deduce the following strong approximation result by almost smooth maps.

\begin{corollary}
\label{corollary:density_class_R}
	Assume that \( s \geq 1 \) and that \( \mN \) admits an \( \parens{\floor{sp}+1} \)\=/singular projection.
	Then, the class \( \Rbase_{s,p}\FromTo{\mM}{\mN} \) is dense in \( W^{s,p}\FromTo{\mM}{\mN} \).
\end{corollary}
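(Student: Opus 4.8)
The plan is to deduce the corollary from Theorem~\ref{theorem:conv_sgeq1} together with the classical density of smooth $\bR^{\nu}$-valued maps in $W^{s,p}(\mM;\bR^{\nu})$, using the diffeomorphism supplied by Proposition~\ref{prop:Inv_trans_proj} to turn the singular projection into one that leaves manifold-valued maps unchanged. So I would start from an arbitrary $u\in W^{s,p}(\mM;\mN)$; since $\mN$ is compact, $u$ is bounded, and by mollification (in charts, for a general $\mM$) there is a sequence $u_{n}\in C^{\infty}(\mM;\bR^{\nu})$ with $u_{n}\to u$ in $W^{s,p}(\mM;\bR^{\nu})$. To meet the hypotheses of Theorem~\ref{theorem:conv_sgeq1} I would first replace each $u_{n}$ by $\chi\circ u_{n}$, where $\chi\colon\bR^{\nu}\to\bR^{\nu}$ is a fixed smooth compactly supported map equal to the identity on a ball containing the range of $u$; this leaves the limit unchanged, by continuity of the superposition operator with a smooth map with bounded derivatives, and makes $(u_{n})_{n\in\bN}$ uniformly bounded in $L^{\infty}$.

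Next, writing $\ell=\floor{sp}+1$, so that $sp<\ell$ and $\mN$ carries an $\ell$-singular projection $P\colon\bR^{\nu}\setminus\Sigma\to\mN$ by hypothesis, I would apply Theorem~\ref{theorem:estimate_sgeq1} to $u$ and to each $u_{n}$ and Theorem~\ref{theorem:conv_sgeq1} to the sequence, all with $\alpha$ equal to the constant $\varepsilon$ provided by Proposition~\ref{prop:Inv_trans_proj}. Since $\Sigma$ is Lebesgue-null, Tonelli's theorem gives, for almost every $a$, that $u-a$ and every $u_{n}-a$ avoid $\Sigma$ almost everywhere, so all the maps $P\circ(u-a)$ and $P\circ(u_{n}-a)$ are defined a.e.; Theorem~\ref{theorem:estimate_sgeq1} then places them in $W^{s,p}(\mM;\mN)$ for a.e. $a$ (they are bounded since $\mN$ is compact); Lemma~\ref{lemma:Sard}, applied to the smooth $u_{n}$ and the finite union $\Sigma$ of $(\nu-\ell)$-submanifolds, shows that for a.e. $a$ each $u_{n}^{-1}(\Sigma+a)$ is a finite union of $(m-\floor{sp}-1)$-submanifolds of $\mM$, so that $P\circ(u_{n}-a)$ lies in $\Rbase_{s,p}(\mM;\mN)$; and Theorem~\ref{theorem:conv_sgeq1} gives $a\mapsto\abs{P\circ(u_{n}-a)-P\circ(u-a)}_{W^{s,p}}\to 0$ in $L^{p}(\ball{\varepsilon}{}{\nu})$, hence a.e. along a subsequence $(n_{k})$. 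Intersecting these countably many full-measure conditions, I would fix a single $a\in\ball{\varepsilon}{}{\nu}$, with $\abs{a}\le\varepsilon$, for which all of them hold.

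Finally I would correct the fact that $P\circ(u-a)\neq u$. By Proposition~\ref{prop:Inv_trans_proj}, $\Psi_{a}:=\parens{P\parens{\cdot-a}\restrfun{\mN}}^{-1}\colon\mN\to\mN$ is a smooth diffeomorphism; since $u$ is $\mN$-valued, $\Psi_{a}\circ P\circ(u-a)=u$ almost everywhere, and since $\Psi_{a}$ is globally smooth on the compact manifold $\mN$, postcomposition with it is continuous on $W^{s,p}(\mM;\mN)$ and preserves smoothness off the singular set. Hence $v_{k}:=\Psi_{a}\circ P\circ(u_{n_{k}}-a)$ belongs to $\Rbase_{s,p}(\mM;\mN)$ and $v_{k}\to\Psi_{a}\circ P\circ(u-a)=u$ in $W^{s,p}$, so $u\in\overline{\Rbase_{s,p}(\mM;\mN)}$, which proves the corollary.

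The substantive analytic content is entirely contained in Theorem~\ref{theorem:conv_sgeq1}, so the remainder is essentially bookkeeping. The only points I expect to require attention are the simultaneous management of the several ``almost every $a$'' conditions together with the subsequence extraction, and --- the one genuinely load-bearing idea --- the passage through $\Psi_{a}$, which guarantees that the corrected projection $\Psi_{a}\circ P\circ(\cdot-a)$ restricts to the identity on $\mN$, so that the approximating sequence converges to the prescribed map $u$ itself rather than to a diffeomorphic perturbation of it.
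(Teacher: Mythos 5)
Your proposal is correct and follows the same route as the paper: approximate $u$ by smooth maps uniformly bounded in $L^{\infty}$, apply Theorem~\ref{theorem:conv_sgeq1} with $\ell=\floor{sp}+1$, use Lemma~\ref{lemma:Sard} to place the projected smooth maps in $\Rbase_{s,p}\FromTo{\mM}{\mN}$, and correct with the diffeomorphism from Proposition~\ref{prop:Inv_trans_proj}. The paper's proof is exactly this argument stated in one line; your write-up merely supplies the routine details (truncation, choice of a common admissible $a$, subsequence extraction), all of which are sound.
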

\begin{proof}
	Given a map \( u \in W^{s,p}\FromTo{\mM}{\mN} \), we apply Theorem~\ref{theorem:conv_sgeq1} to any sequence \( \seq{u_{n}}{n \in \bN} \) of smooth bounded \( W^{s,p}\FromTo{\mM}{\bR^{\nu}} \) maps that converges to \( u \) in \( W^{s,p} \) and is uniformly bounded in \( L^{\infty} \), and we conclude with the aid of with Lemma~\ref{lemma:Sard} and Proposition~\ref{prop:Inv_trans_proj}.
\end{proof}

Corollary~\ref{corollary:density_class_R} is a special case of~\cite[Theorem~2.4]{Detaille2025}.
The main difference here is the restriction \( s \geq 1 \), while~\cite[Theorem~2.4]{Detaille2025} covers the full range \( 0 < s < +\infty \).
This comes from the fact that the proof of Corollary~\ref{corollary:density_class_R} relies on a result for the method of singular projection that works for \emph{any} converging sequence of \( \bR^{\nu} \)\=/valued \( W^{s,p} \) maps, while the approach in~\cite{Detaille2025} was relying heavily on the use of a sequence of maps obtained as convolutions of an \( \mN \)\=/valued map.

\section{The case \texorpdfstring{\( 0 < s < 1 \)}{0 < s < 1}}
\label{sect:0lesle1}

We now move to our negative result in the range \( 0 < s < 1 \), namely Theorem~\ref{theorem:ctrex_sle1}.
We start by introducing some notation.
In this section, as we are concerned with Theorem~\ref{theorem:ctrex_sle1}, we denote by \( P \) the retraction \( \bR^{\ell} \setminus \set{0}{} \to \bS^{\ell-1} \) given by \( P\farg{x} = \frac{x}{\abs{x}} \).
We let \( e_{i} \) be the \( i \)\=/th vector of the canonical basis of \( \bR^{\ell} \), and given \( c \in \ball{}{}{\ell} \) and \( n \in \bN \), we let \( c^{+} = c^{+}_{n} = c + 2^{1-n}e_{1} \) and \( c^{-} = c^{-}_{n} = c - 2^{1-n}e_{1} \).
Finally, similar to balls, we denote by \( \cube{r}{a}{\ell} \) the cube of inradius \( r \) centered at \( a \) in \( \bR^{\ell} \), so that \( \cube{r}{a}{\ell} \) has a sidelength equal to \( 2r \). 

The building block of our construction is a function taking both values \( c^{+} \) and \( c^{-} \) on two close cubes, and whose instrumental properties are collected in the following lemma.

\begin{lemma}
\label{lemma:patch}
	For every \( n \in \bN \), every \( c \in \ball{}{}{\ell} \), and every \( 0 < s < 1 \) and \( 1 \leq p < +\infty \), there exists a map \( v_{c} \in C^{\infty}\cs\FromTo{\bR^{\ell}}{\bR^{\ell}} \) with support contained in \( \cube{}{}{\ell} \) and such that
	\begin{equation}
	\label{eq:969c30cd8dd53af8}
		\abs{v_{c}}_{W^{s,p}\FromTo{\bR^{\ell}}{}}^{p} \leq C
	\end{equation}
	while, for every \( a \in \ball{}{}{\ell} \),
	\begin{equation}
	\label{eq:969c30cd8dd53af9}
		\abs{P \circ \parens{v_{c}-a}}_{W^{s,p}\FromTo{\ball{}{}{\ell}}{}}^{p}
		\geq
		C'\frac{\abs{P\farg{c^{+}-a}-P\farg{c^{-}-a}}^{p}}{2^{\parens{1-n}p}}\text{,}
	\end{equation}
	for some constants \( C \), \( C' > 0 \) depending on \( s \), \( p \), and \( \ell \).
\end{lemma}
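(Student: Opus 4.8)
The plan is to take $v_{c}$ to be a highly oscillating perturbation of the constant value $c$: a map that equals $c^{+}$ on roughly $N$ thin parallel slabs and $c^{-}$ on roughly $N$ interleaved slabs, where $N$ is chosen as large as~\eqref{eq:969c30cd8dd53af8} allows. Concretely, I would fix, independently of all data, a ball $B_{\ast} \subset \ball{}{}{\ell}$ and a cutoff $\theta \in C^{\infty}\cs\FromTo{\bR^{\ell}}{\ccInterval{0}{1}}$ with $\theta \equiv 1$ on $B_{\ast}$ and $\supp\theta \subset \cube{}{}{\ell}$; then I let $N$ be the least integer with $N \geq 2^{(n-1)/s}$, so that $2^{(n-1)p} \leq N^{sp} \leq 2^{p}\,2^{(n-1)p}$, and I choose $\eta \in C^{\infty}\cs\FromTo{\bR^{\ell}}{\ccInterval{-1}{1}}$ supported in $B_{\ast}$, equal to $1$ on $N$ disjoint slabs $T^{+}_{1},\dotsc,T^{+}_{N} \subset B_{\ast}$ of width $\approx 1/N$ orthogonal to $e_{1}$, equal to $-1$ on $N$ interleaved slabs $T^{-}_{1},\dotsc,T^{-}_{N}$, arranged so that consecutive slabs lie at mutual distance $\lesssim 1/N$ and with $\norm{\Dext{\eta}}_{L^{\infty}\FromTo{\bR^{\ell}}{}} \lesssim N$. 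Finally I set
\[
	v_{c} = \theta\,c + 2^{1-n}\,\eta\,e_{1} \in C^{\infty}\cs\FromTo{\bR^{\ell}}{\bR^{\ell}}\text{,}
\]
which is supported in $\cube{}{}{\ell}$ and satisfies $v_{c} \equiv c^{\pm}$ on each $T^{\pm}_{i}$ (since $\theta \equiv 1$ there).

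For the upper bound~\eqref{eq:969c30cd8dd53af8}, I would use the elementary inequality $\abs{f+g}_{W^{s,p}}^{p} \lesssim \abs{f}_{W^{s,p}}^{p} + \abs{g}_{W^{s,p}}^{p}$ together with scaling to obtain $\abs{v_{c}}_{W^{s,p}\FromTo{\bR^{\ell}}{}}^{p} \lesssim \abs{c}^{p}\abs{\theta}_{W^{s,p}\FromTo{\bR^{\ell}}{}}^{p} + 2^{(1-n)p}\abs{\eta}_{W^{s,p}\FromTo{\bR^{\ell}}{}}^{p}$; since $\abs{c} \leq 1$, the first term is a fixed constant. For the second, the standard estimate for an $N$-fold oscillation gives $\abs{\eta}_{W^{s,p}\FromTo{\bR^{\ell}}{}}^{p} \lesssim N^{sp}$: split the Gagliardo integral at the scale $1/N$, bounding $\abs{\eta\farg{x}-\eta\farg{y}}$ by $\norm{\Dext{\eta}}_{L^{\infty}}\abs{x-y} \lesssim N\abs{x-y}$ when $\abs{x-y} < 1/N$ (here $s < 1$ is used so that the resulting integral converges) and by $2$ when $\abs{x-y} \geq 1/N$. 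Thus $2^{(1-n)p}\abs{\eta}_{W^{s,p}}^{p} \lesssim 2^{(1-n)p}N^{sp} \leq 2^{p}$ by the choice of $N$, and~\eqref{eq:969c30cd8dd53af8} follows with a constant $C$ depending only on $s$, $p$, $\ell$.

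For the lower bound~\eqref{eq:969c30cd8dd53af9}, fix $a \in \ball{}{}{\ell}$; we may assume $a \notin \set{c^{+},c^{-}}{}$, since otherwise the right-hand side is undefined, and we may assume $P \circ \parens{v_{c}-a}$ is defined almost everywhere and lies in $W^{s,p}\FromTo{\ball{}{}{\ell}}{}$, since otherwise the left-hand side is $+\infty$. Write $\xi^{\pm} = P\farg{c^{\pm}-a} \in \bS^{\ell-1}$. The key point is that $P \circ \parens{v_{c}-a} \equiv \xi^{+}$ on each $T^{+}_{i}$ and $\equiv \xi^{-}$ on each $T^{-}_{i}$ — regardless of how $v_{c}-a$ behaves on the transition regions, where it may come arbitrarily close to, or even pass through, the origin — and that all these slabs lie in $B_{\ast} \subset \ball{}{}{\ell}$. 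Hence, restricting the double integral defining the seminorm to $A^{+} \times A^{-}$, where $A^{\pm} = \bigcup_{i} T^{\pm}_{i}$,
\[
	\abs{P \circ \parens{v_{c}-a}}_{W^{s,p}\FromTo{\ball{}{}{\ell}}{}}^{p}
	\geq
	\abs{\xi^{+}-\xi^{-}}^{p}\int_{A^{+}}\int_{A^{-}}\frac{\intd{x}\intd{y}}{\abs{x-y}^{\ell+sp}}\text{.}
\]
It then remains to show that the geometric integral is $\gtrsim N^{sp}$: on the set $\set{\abs{x-y} < \parens{2N}^{-1}}{}$ one has $\abs{x-y}^{-\ell-sp} \geq \parens{2N}^{\ell+sp}$, while, because consecutive slabs are at distance $\lesssim 1/N$ and of comparable size, $\abs{\set{\tuple{x,y} \in A^{+}\times A^{-}}{\abs{x-y} < \parens{2N}^{-1}}} \gtrsim N^{-\ell}$, and multiplying the two yields the claim. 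Combining with $N^{sp} \geq 2^{(n-1)p}$ gives~\eqref{eq:969c30cd8dd53af9} with a constant $C' > 0$ depending only on $s$, $p$, $\ell$.

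The main obstacle — and the heart of the argument — is making the lower bound valid for \emph{every} $a$ simultaneously: for $a$ close to the segment $\ccInterval{c^{-}}{c^{+}}$ the composition $P \circ \parens{v_{c}-a}$ is genuinely singular (and may fail to be in $W^{s,p}$ altogether), but the estimate above avoids this entirely by only ever evaluating $P \circ \parens{v_{c}-a}$ on the slabs where $v_{c}$ is constant. The second delicate point is the matching of exponents: inserting $N$ oscillations multiplies \emph{both} $\abs{v_{c}}_{W^{s,p}}^{p}$ and $\abs{P \circ \parens{v_{c}-a}}_{W^{s,p}}^{p}$ by $\approx N^{sp}$, whereas the ratio $\abs{\xi^{+}-\xi^{-}}^{p}/\abs{c^{+}-c^{-}}^{p}$ may be as large as $\approx 2^{np}$ — this is precisely the stretching effect of the singular projection near $0$ — so taking $N$ as large as~\eqref{eq:969c30cd8dd53af8} permits, namely $N \approx 2^{(n-1)/s}$, converts this into exactly the factor $2^{(n-1)p} = 2^{-(1-n)p}$ on the right-hand side of~\eqref{eq:969c30cd8dd53af9}.
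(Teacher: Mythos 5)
Your proof is correct, and it takes a genuinely different route from the paper's. The paper starts from a macroscopic block $w_{c}$ taking the values $c^{\pm}$ on two fixed balls, and then amplifies its contribution by arranging $k^{\ell}$ shrunken copies of $w_{c}$ on a grid of balls of radius $\simeq 1/k$ inside $\ball{1/2}{}{\ell}$: the upper bound comes from the Gagliardo scaling (a factor $k^{sp-\ell}$ per copy) together with the countable patching lemma of~\cite{MonteilVanSchaftingen2019}, the lower bound from superadditivity of the Gagliardo seminorm over the disjoint small balls, and the cluster is finally glued to a constant background via the almost-additivity lemma. You instead build a single $N$-fold slab oscillation $\eta$ in the $e_{1}$ direction and estimate both sides directly: the upper bound by splitting the Gagliardo integral at the transition scale $1/N$ (this is where $s < 1$ enters, through the convergence of $\int_{0}^{1/N} r^{p(1-s)-1}\intd{r}$), and the lower bound by restricting the double integral to $A^{+}\times A^{-}$ and measuring the set of pairs closer than $1/(2N)$ --- which, as you correctly emphasize, never evaluates $P\circ\parens{v_{c}-a}$ on the transition zones where the argument may hit the singularity, the same purpose that superadditivity over the small balls serves in the paper. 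Both constructions amplify the projected energy by $\simeq N^{sp}$ (resp.\ $k^{sp}$) while keeping the Sobolev energy $O(1)$, and both choose $N \simeq k \simeq 2^{(n-1)/s}$. Yours is a more elementary realization of the same underlying idea, not appealing to the patching and clustering machinery; the paper's is a compact black-box application of the lemmas from~\cite{MonteilVanSchaftingen2019}, which it is already invoking for the nonlinear uniform boundedness principle, and which automatically takes care of confining the support and paying only $O(1)$ energy for the cutoff to a constant --- a point you handle equivalently by including the $\theta\,c$ term from the start.
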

\begin{proof}
	We fix a cutoff function \( \psi \in C^{\infty}\cs\FromTo{\bR^{\ell}}{} \) whose support is contained in \( \ball{}{}{\ell} \) and such that \( 0 \leq \psi \leq 1 \) and \( \psi = 1 \) on \( \ball{1/2}{}{\ell} \).
	We define the map \( w_{c} \) by \( w_{c}\farg{x} = c + 2^{1-n}\parens{\psi\farg{x-e_{1}}-\psi\farg{x+e_{1}}}e_{1} \), so that \( w_{c} = c^{-} \) on \( \ball{1/2}{-e_{1}}{\ell} \) and \( w_{c} = c^{+} \) on \( \ball{1/2}{e_{1}}{\ell} \).
	It is straightforward to observe that \( \abs{w_{c}}_{W^{s,p}\FromTo{\bR^{\ell}}{}} \lesssim 2^{\parens{1-n}p} \) while \( \abs{P \circ \parens{w_{c}-a}}_{W^{s,p}\FromTo{\ball{}{}{\ell}}{}}^{p} \gtrsim \abs{P\farg{c^{+}-a}-P\farg{c^{-}-a}}^{p} \).
	
	We would like to transform \( w_{c} \) into a compactly supported map while preserving the order of the ratio between the projected map and the original one.
	However, we need to cope with the fact that the transition towards \( 0 \) will bring a fixed cost to the Sobolev energy.
	We bypass this issue by clustering several scaled copies of the map \( w_{c} \), using an analogue argument to~\cite[Proof of Theorem~3.1]{MonteilVanSchaftingen2019}.
	Namely, we construct a map \( \tilde{w}_{c} \) by arranging scaled copies of the map \( w_{c} \) on \( k^{\ell} \) balls of radius of order \( \frac{1}{k} \) arranged in a regular grid inside \( \ball{1/2}{}{\ell} \), with \( k \in \bN_{\ast} \) to be chosen later on.
	In particular, the map \( \tilde{w}_{c} \) takes the value \( c \) outside of \( \ball{1/2}{}{\ell} \).
	By scaling and the countable patching property of Sobolev maps, it holds that
	\[
		\abs{\tilde{w}_{c}}_{W^{s,p}\FromTo{\bR^{\ell}}{}}^{p}
		\lesssim
		k^{\ell}k^{sp-\ell}2^{\parens{1-n}p}
		=
		k^{sp}2^{\parens{1-n}p}\text{,}
	\]
	while, by superadditivity,
	\[
		\abs{P \circ \parens{\tilde{w}_{c}-a}}_{W^{s,p}\FromTo{\ball{1/2}{}{\ell}}{}}^{p}
		\gtrsim
		k^{sp}\abs{P\farg{c^{+}-a}-P\farg{c^{-}-a}}^{p}\text{.}
	\]
	We choose \( k \) sufficiently large so that \( k^{sp} \simeq 2^{\parens{n-1}p} \).
	
	Finally, we choose a function \( \varphi_{c} \in C^{\infty}\cs\FromTo{\bR^{\ell}}{} \) whose support is contained in \( \ball{2}{}{\ell} \) and such that \( \varphi_{c} = c \) on \( \ball{}{}{\ell} \) and \( \abs{\varphi_{c}}_{W^{s,p}\FromTo{\bR^{\ell}}{}}^{p} \lesssim 1 \), and we define \( v_{c} \) by \( v_{c} = \tilde{w}_{c} \) on \( \ball{}{}{\ell} \) and \( v_{c} = \varphi_{c} \) outside of \( \ball{}{}{\ell} \).
	It follows from the almost additivity of the Gagliardo seminorm, see e.g.~\cite[Lemma~2.2]{MonteilVanSchaftingen2019}, that
	\[
		\abs{v_{c}}_{W^{s,p}\FromTo{\bR^{\ell}}{}}^{p}
		\lesssim
		\abs{\tilde{w}_{c}}_{W^{s,p}\FromTo{\bR^{\ell}}{}}^{p} + \abs{\varphi_{c}}_{W^{s,p}\FromTo{\bR^{\ell}}{}}^{p}
		\lesssim 1\text{,}
	\]
	while
	\[
		\abs{P \circ \parens{v_{c}-a}}_{W^{s,p}\FromTo{\ball{}{}{\ell}}{}}^{p}
		\gtrsim
		k^{sp}\abs{P\farg{c^{+}-a}-P\farg{c^{-}-a}}^{p}
		\gtrsim
		\frac{\abs{P\farg{c^{+}-a}-P\farg{c^{-}-a}}^{p}}{2^{\parens{1-n}p}}\text{,}
	\]
	which concludes the proof.
\end{proof}

To make use of Lemma~\ref{lemma:patch}, we need to suitably estimate the quantity \( \abs{P\farg{c^{+}-a}-P\farg{c^{-}-a}} \).
We present two different arguments: the first one is simpler, but it only covers the range \( p > \ell \); the second one is slightly more involved, but allows to handle also the limiting case \( p = \ell \).
These arguments each rely on a lemma involving planar geometry, respectively Lemma~\ref{lemma:geom1} and Lemma~\ref{lemma:geom2} below.

\begin{lemma}
\label{lemma:geom1}
	For every \( n \in \bN \), every \( c \in \ball{}{}{\ell} \), and every \( a \in \ccube{2^{-n}}{c}{\ell} \), we have
	\[
		\abs{P\farg{c^{+}-a}-P\farg{c^{-}-a}}
		\geq
		C\text{,}
	\]
	for some constant \( C > 0 \) depending only on \( \ell \).
\end{lemma}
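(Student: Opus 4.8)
The plan is to reduce the statement to an elementary Euclidean estimate, showing that the two unit vectors $P\farg{c^{+}-a}$ and $P\farg{c^{-}-a}$ point into opposite half-spaces $\set{x_{1} > 0}{}$ and $\set{x_{1} < 0}{}$, with a quantitative gap. Set $\delta = 2^{-n}$ and $b = c - a \in \bR^{\ell}$; since $a \in \ccube{2^{-n}}{c}{\ell}$, every coordinate of $b$ satisfies $\abs{b_{i}} \leq \delta$. From the definitions $c^{\pm} = c \pm 2^{1-n}e_{1}$, I would write $c^{+}-a = b + 2\delta e_{1}$ and $c^{-}-a = b - 2\delta e_{1}$, so that the first coordinates satisfy $\parens{c^{+}-a}_{1} = b_{1} + 2\delta \geq \delta$ and $\parens{c^{-}-a}_{1} = b_{1} - 2\delta \leq -\delta$; in particular both vectors are nonzero, so that $P$ is well-defined at each of them.

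Next I would bound their norms from above. Since only the first coordinate differs from $b$, and $\abs{b_{1} \pm 2\delta} \leq 3\delta$ while $\sum_{i=2}^{\ell} b_{i}^{2} \leq (\ell-1)\delta^{2}$, one obtains
\[
	\abs{c^{\pm}-a}^{2} \leq 9\delta^{2} + (\ell-1)\delta^{2} = (\ell+8)\delta^{2},
\]
that is, $\abs{c^{\pm}-a} \leq \delta\sqrt{\ell+8}$. Combining this with the lower (resp.\ upper) bound on the first coordinates found above, the first coordinate of $P\farg{c^{+}-a}$ is at least $\delta/\parens{\delta\sqrt{\ell+8}} = 1/\sqrt{\ell+8}$, while the first coordinate of $P\farg{c^{-}-a}$ is at most $-1/\sqrt{\ell+8}$, where in the latter case one uses that the numerator $\parens{c^{-}-a}_{1}$ is negative together with the bound on $\abs{c^{-}-a}$.

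Finally, since the norm of a vector dominates the absolute value of any of its coordinates,
\[
	\abs{P\farg{c^{+}-a} - P\farg{c^{-}-a}} \geq \abs[\big]{\parens{P\farg{c^{+}-a}}_{1} - \parens{P\farg{c^{-}-a}}_{1}} \geq \frac{2}{\sqrt{\ell+8}} =: C,
\]
which is a positive constant depending only on $\ell$, as claimed. There is no genuine obstacle in this argument; the only two points that require a bit of care are the passage from the cube constraint $\abs{b_{i}} \leq \delta$ to the norm bound $\abs{c^{\pm}-a} \leq \delta\sqrt{\ell+8}$, which is responsible for the dependence of $C$ on $\ell$, and keeping track of the signs of the first coordinates, which is precisely what forces the two unit vectors to lie in opposite half-spaces and hence to remain uniformly apart.
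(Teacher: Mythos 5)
Your argument is correct: every step checks out, including the sign analysis of the first coordinates (from $\abs{b_{1}} \leq 2^{-n}$ you indeed get $\parens{c^{+}-a}_{1} \geq 2^{-n}$ and $\parens{c^{-}-a}_{1} \leq -2^{-n}$) and the norm bound $\abs{c^{\pm}-a} \leq 2^{-n}\sqrt{\ell+8}$, which together yield the explicit constant $C = 2/\sqrt{\ell+8}$. Your route is, however, genuinely different from the paper's. You argue coordinatewise: the two normalized vectors lie in opposite half-spaces $\set{x_{1}>0}{}$ and $\set{x_{1}<0}{}$ with first coordinates uniformly bounded away from zero, and the difference of first coordinates already gives the gap. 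The paper instead introduces the orthogonal projection $\tilde{c}$ of $a$ onto the line through $c^{+}$ and $c^{-}$, expands $\abs{P\farg{c^{+}-a}-P\farg{c^{-}-a}}$ through the identity for the numerator of $\frac{x}{\abs{x}}-\frac{y}{\abs{y}}$, and uses the orthogonal decomposition $c^{\pm}-a = \parens{c^{\pm}-\tilde{c}} + \parens{\tilde{c}-a}$ to obtain the intermediate lower bound $2\,\abs{c^{+}-\tilde{c}}\abs{c^{-}-\tilde{c}}/\parens{\abs{c^{+}-a}\abs{c^{-}-a}}$, before concluding with $\abs{c^{\pm}-\tilde{c}} \gtrsim 2^{-n}$ and $\abs{c^{\pm}-a} \lesssim 2^{1-n}$. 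Both proofs exploit the same geometric fact, namely that the offset $\pm 2^{1-n}e_{1}$ dominates the possible displacement of $a$ within the cube $\ccube{2^{-n}}{c}{\ell}$; your version is shorter and produces an explicit constant, while the paper's identity-based computation is structurally parallel to the law-of-cosines argument used for Lemma~\ref{lemma:geom2}, which is presumably why the author treated both lemmas in that uniform style.
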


\begin{lemma}
\label{lemma:geom2}
	For every \( n \in \bN \), every \( c \in \ball{}{}{\ell} \), and every \( a \in \ball{}{}{\ell} \) such that
	\begin{enumerate*}[label=(\(\textup{\roman*}\))]
		\item\label{it:ass_varphi} the angle \( \varphi \) formed by the vectors \( a-c \) and \( e_{1} \) satisfies \( \abs{\cos\varphi} \leq \frac{1}{8} \), 
		\item\label{it:ass_dist} \( \abs{a-c} \geq 2^{-n} \), 
	\end{enumerate*}
	we have
	\[
		\abs{P\farg{c^{+}-a}-P\farg{c^{-}-a}}
		\geq
		C\frac{2^{1-n}}{\abs{a-c}}\text{,}
	\]
	for some constant \( C > 0 \) depending only on \( \ell \).
\end{lemma}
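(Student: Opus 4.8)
The plan is to reduce the statement to an elementary computation with the explicit formula for the chordal distance between two radial projections. I would set $t = 2^{1-n}$, $b = c - a$, $r = \abs{a-c} = \abs{b}$, and $\beta = b \cdot e_{1}$, so that $c^{\pm} - a = b \pm t e_{1}$ and the two hypotheses become, respectively, $\abs{\beta} \leq r/8$ and $r \geq t/2$. Before anything else I would record that $P$ is applied to nonzero vectors, which follows from
\[
	\abs{b \pm t e_{1}}^{2} = r^{2} \pm 2 t \beta + t^{2} \geq r^{2} - \tfrac{t r}{4} + t^{2} = \bigl(r - \tfrac{t}{8}\bigr)^{2} + \tfrac{63}{64}\, t^{2} > 0 .
\]

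The main tool is the identity $\abs{P(u) - P(v)}^{2} = 2 - \frac{2\, u \cdot v}{\abs{u}\,\abs{v}}$ for $u, v \in \bR^{\ell} \setminus \set{0}{}$. Applying it with $u = b + t e_{1}$ and $v = b - t e_{1}$, and using $u \cdot v = \abs{b}^{2} - t^{2} = r^{2} - t^{2}$ together with $\abs{u}^{2}\abs{v}^{2} = (r^{2} + t^{2})^{2} - 4 t^{2} \beta^{2} =: D$, I would obtain the exact formula
\[
	\abs{P(c^{+}-a) - P(c^{-}-a)}^{2} = 2 - \frac{2(r^{2}-t^{2})}{\sqrt{D}} .
\]

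Then I would split cases according to the sign of $r^{2} - t^{2}$. If $r \leq t$, the right-hand side above is $\geq 2$; since $r \geq t/2$ gives $(t/r)^{2} \leq 4$, this already yields $\abs{P(c^{+}-a) - P(c^{-}-a)}^{2} \geq \tfrac{1}{2}\,(t/r)^{2}$. If $r > t$, I would rationalize,
\[
	2 - \frac{2(r^{2}-t^{2})}{\sqrt{D}} = \frac{2\bigl(D - (r^{2}-t^{2})^{2}\bigr)}{\sqrt{D}\,\bigl(\sqrt{D} + (r^{2}-t^{2})\bigr)} ,
\]
then estimate the numerator from below by $D - (r^{2}-t^{2})^{2} = 4 t^{2}\,(r^{2} - \beta^{2}) \geq \tfrac{63}{16}\, t^{2} r^{2}$ — this is precisely where $\abs{\beta} \leq r/8$ enters — and the denominator from above by $\sqrt{D} \leq r^{2}+t^{2} \leq 2 r^{2}$ and $\sqrt{D} + (r^{2}-t^{2}) \leq 2 r^{2}$, to get $\abs{P(c^{+}-a) - P(c^{-}-a)}^{2} \geq \tfrac{63}{32}\,(t/r)^{2} \geq \tfrac{1}{2}\,(t/r)^{2}$. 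In both regimes $\abs{P(c^{+}-a) - P(c^{-}-a)} \geq \tfrac{1}{\sqrt{2}}\, t/r = \tfrac{1}{\sqrt{2}}\, 2^{1-n}/\abs{a-c}$, which is the claimed inequality with $C = 1/\sqrt{2}$; note that the hypotheses $a, c \in \ball{}{}{\ell}$ are never used.

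The computation is routine, so the only point that really deserves care is the regime $r \gtrsim t$: there the vectors $c^{+}-a$ and $c^{-}-a$ are nearly parallel, $u \cdot v/(\abs{u}\abs{v})$ is close to $1$, and the $t/r$ decay rate must be extracted quantitatively from $2 - 2\cos$, which is exactly what the rationalization step accomplishes and where the angle hypothesis $\abs{\cos\varphi} \leq \tfrac{1}{8}$ (which prevents $D$ from degenerating) is essential. The distance hypothesis $\abs{a-c} \geq 2^{-n}$, on the other hand, is only needed to keep $t/r$ bounded in the complementary regime $r \leq t$.
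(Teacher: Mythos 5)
Your proof is correct and follows essentially the same route as the paper's: both arguments rest on the chordal-distance identity \( \abs{P(u)-P(v)}^{2} = 2 - 2\,u\cdot v/(\abs{u}\abs{v}) \) together with law-of-cosines/dot-product algebra, with hypothesis~(i) preventing the nearly degenerate (collinear) configuration and hypothesis~(ii) handling the regime \( \abs{a-c} \lesssim 2^{1-n} \). The only difference is bookkeeping: the paper parametrizes by the side lengths \( x_{1} = \abs{c^{+}-a} \), \( x_{2} = \abs{c^{-}-a} \) and applies Al Kashi's formula twice, whereas you compute directly with \( r \), \( \beta \) and rationalize, which is an equivalent calculation and additionally yields the explicit constant \( C = 1/\sqrt{2} \).
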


Taking Lemmas~\ref{lemma:geom1} and~\ref{lemma:geom2} for granted, we proceed with the proof of the main result of this section.

\begin{proof}[Proof of Theorem~\ref{theorem:ctrex_sle1}]
	As explained in the introduction, our goal is to apply the general nonlinear uniform boundedness principle~\cite[Theorem~1.6]{MonteilVanSchaftingen2019} with the energy \( \mG \) over \( \bR^{\ell} \) with state space \( \bR^{\ell} \) given by 
	\[
		\mG\farg{u,A}
		=
		\inf_{a \in \ball{}{}{\ell}} \cE^{s,p}\farg{P \circ \parens{u-a},A}\text{.}
	\]
	Here, \( \cE^{s,p} \) denotes the Sobolev energy defined as \( \cE^{s,p}\farg{u,A} = \abs{u}_{W^{s,p}\FromTo{A}{}}^{p} \).
	It is readily seen that such a quantity indeed satisfies the monotonicity, scaling, and superadditivity requirements in~\cite{MonteilVanSchaftingen2019}.
	We are going to construct a sequence \( \seq{u_{n}}{n \in \bN} \) in \( W^{s,p}\FromTo{\ball{}{}{\ell}}{\bR^{\ell}} \) such that
	\[
		\lim_{n \to +\infty}\frac{\mG\farg{u_{n},\ball{}{}{\ell}}}{\cE^{s,p}\farg{u_{n},\ball{}{}{\ell}}}
		=
		+\infty\text{,}
	\]
	and the conclusion will follow at once from the nonlinear uniform boundedness principle.
	
	The key idea of the proof is to suitably glue together different "patches", each of them bringing an important contribution for some values of \( a \).
	For every \( n \in \bN \), we consider the standard decomposition of the unit cube \( \cube{}{}{\ell} \) into \( 2^{n\ell} \) cubes of sidelength \( 2^{1-n} \).
	We let \( \seq{c_{n,k}}{1 \leq k \leq 2^{n\ell}} \) be an enumeration of the centers of those cubes.
	For every \( k \), we define 
	\[
		c_{n,k}^{+} = c_{n,k} + 2^{1-n}e_{1}
		\quad
		\text{and}
		\quad
		c_{n,k}^{-} = c_{n,k} - 2^{1-n}e_{1}\text{,}
	\]
	following the notation introduced before the statement of Lemma~\ref{lemma:patch}.
	We define
	\[
		v_{n,k} = v_{c_{n,k}}\text{,}
	\]
	where \( v_{c_{n,k}} \) is the map provided by Lemma~\ref{lemma:patch}.
	Combining~\eqref{eq:969c30cd8dd53af9} with Lemma~\ref{lemma:geom1}, we find that
	\begin{equation}
		\label{eq:6f1ff66dc8dd4b58}
		\cE^{s,p}\farg{P\circ \parens{v_{n,k}-a},\ball{}{}{\ell}}
		\gtrsim 2^{np}
		\quad
		\text{for every \( a \in \ccube{2^{-n}}{c_{n,k}}{\ell} \).}
	\end{equation}
	
	We now exploit this lower bound to obtain the required estimate for our counterexample.
	We construct a map \( v_{n} \) by gluing together translates of the \( v_{n,k} \), one for each \( 1 \leq k \leq 2^{n\ell} \), so that their supports do not overlap.
	A straightforward lower bound yields
	\[
		\cE^{s,p}\farg{P\circ \parens{v_{n}-a},\supp{v_{n}}}
		\gtrsim
		2^{np}
		\quad
		\text{for every \( a \in \ball{}{}{\ell}\),}
	\]
	by using the estimate~\eqref{eq:6f1ff66dc8dd4b58} for a \( k \) such that \( a \in \ccube{2^{-n}}{c_{n,k}}{\ell} \) --- this \( k \) is unique unless \( a \) is on the boundary of some cube of the grid.
	On the other hand, the countable patching property of Sobolev mappings, see e.g.~\cite[Lemma~2.3]{MonteilVanSchaftingen2019}, implies that
	\begin{equation}
	\label{eq:787965c4b52eafe1}
		\cE^{s,p}\farg{v_{n},\bR^{\ell}}
		\lesssim
		\sum_{1 \leq k \leq 2^{n\ell}}\cE^{s,p}\farg{v_{n,k},\bR^{\ell}}
		\lesssim
		2^{n\ell}
		\text{.}
	\end{equation}
	
	In the case where \( p > \ell \), we find
	\begin{equation}
	\label{eq:787965c4b52eafe0}
		\frac{\inf\limits_{a \in \ball{}{}{\ell}}\cE^{s,p}\farg{P\circ \parens{v_{n}-a},\supp{v_{n}}}}{\cE^{s,p}\farg{v_{n},\bR^{\ell}}}
		\to
		+\infty
		\quad
		\text{as \( n \to +\infty \).}
	\end{equation}
	We conclude by letting \( u_{n} \) be a scaled copy of \( v_{n} \) so that its support is contained in \( \ball{}{}{\ell} \) and invoking the nonlinear uniform boundedness principle.
	
	\bigskip
	
	If \( p = \ell \), one needs to rely on a more careful estimate, and to take into account the contribution of several centers, relying on Lemma~\ref{lemma:geom2} instead to deduce the required lower bound on the \( \mG \) energy of \( v_{n} \).
	More specifically, for a fixed point \( a \in \ball{}{}{\ell} \) and a fixed \( d \in 2^{1-n}\bN_{\ast} \), we invoke Lemma~\ref{lemma:geom2} with \( c = c_{n,k} \) for every \( k \) such that the \( \ell^{\infty} \) distance between the center of the cube containing \( a \) and \( c_{n,k} \) is \( d \) and such that \( \abs{\varphi-\frac{\pi}{2}} \) is sufficiently small, and we find
	\[
		\abs{P\farg{c_{n,k}^{+}-a}-P\farg{c_{n,k}^{-}-a}}
		\gtrsim
		\frac{2^{1-n}}{d}\text{,}
	\]
	using also the fact that \( d \simeq \abs{c_{n,k}-a} \).
	The number of corresponding such indices \( k \) behaves like \( \parens{2^{n-1}d}^{\ell-1} \) --- this can be seen as a discrete isoperimetric inequality.
	We take into account all these contributions, for \( d = j2^{1-n} \) with \( j \in \bN_{\ast} \) ranging from \( 1 \) to \( 2^{n-1} \), and we get from~\eqref{eq:969c30cd8dd53af9}
	\[
		\cE^{s,p}\farg{P\circ \parens{v_{n}-a},\supp{v_{n}}}
		\gtrsim
		2^{np}\sum_{j=1}^{2^{n-1}} \frac{1}{j^{p}}j^{\ell-1}
		\gtrsim
		2^{np}\ln{2^{n-1}}\text{.}
	\]
	Combining this with~\eqref{eq:787965c4b52eafe1} and the fact that \( p = \ell \), we deduce that we are again in position to apply the uniform nonlinear boundedness principle to the maps \( u_{n} \) defined as scaled copies of \( v_{n} \) so that their supports are contained in \( \ball{}{}{\ell} \), and this concludes the proof.
\end{proof}

Let us make a comment about the strategy for estimating the energy of \( P \circ \parens{v_{n}-a} \) in the case where \( p = \ell \) in the above proof.
The idea was to take into account the contribution to the energy of not only one "patch" but of many of them.
More specifically, we took into account the contribution of every patch whose associated center \( c_{n,k} \) lies in a cone with vertex \( a \) and whose aperture is small, but fixed independently of the scale \( n \).
Alternatively, we could have relied on a similar construction, but instead of using as a building block the maps \( v_{n,k} \) with values concentrated around the two points \( c_{n,k}^{+} \) and \( c_{n,k}^{-} \) which are aligned along the direction \( e_{1} \), using rather maps with values concentrated around \emph{several} points, pairwise aligned along different directions.
Using a number of directions possibly large, but fixed independently of the scale \( n \), it is possible to ensure that \emph{every} center then leads a significant contribution to the energy of the projected map, eliminating the need of selecting only points in a fixed cone.

From a geometric point of view, this discussion amounts to say that one may either consider all centers in a region small but of fixed size determined by one axis, or use a number of axis large but independent of the scale to be in position to consider all centers.

We now prove Lemma~\ref{lemma:geom1} and~\ref{lemma:geom2}.

\begin{proof}[Proof of Lemma~\ref{lemma:geom1}]
	We refer the reader to Figure~\ref{fig:geom1} for an illustration of our constructions and notation.
	\begin{figure}[t]
		\centering
		\includegraphics[page=1]{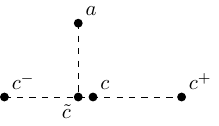}
		\caption{Estimate of \( \abs{P\farg{c^{+}-a}-P\farg{c^{-}-a}} \) in Lemma~\ref{lemma:geom1}}
		\label{fig:geom1}
	\end{figure}
	We let \( \tilde{c} \) be the orthogonal projection of \( a \) onto the line passing through \( c^{+} \) and \( c^{-} \), and we observe that \( \tilde{c} \in \ccube{2^{-n}}{c}{\ell} \).
	We compute
	\begin{equation}
		\label{eq:af4dde504e374905}
		P\farg{c^{+}-a}-P\farg{c^{-}-a}
		=
		\frac{\parens{c^{+}-a}\abs{c^{-}-a} - \parens{c^{-}-a}\abs{c^{+}-a}}{\abs{c^{+}-a}\abs{c^{-}-a}}\text{.}
	\end{equation}
	Let \( N \) denote the numerator of the right-hand side in~\eqref{eq:af4dde504e374905}.
	We compute
	\begin{equation}
		\label{eq:91b81d84429371dd}
		\abs{N}^{2}
		=
		2\abs{c^{+}-a}^{2}\abs{c^{-}-a}^{2} - 2\abs{c^{+}-a}\abs{c^{-}-a}\parens{c^{+}-a}\cdot\parens{c^{-}-a}\text{.}
	\end{equation}
	Writing the orthogonal decomposition \( c^{\pm}-a = c^{\pm}-\tilde{c}+\tilde{c}-a \), we find
	\[
	\parens{c^{+}-a}\cdot\parens{c^{-}-a}
	=
	\abs{\tilde{c}-a}^{2} -\abs{c^{+}-\tilde{c}}\abs{c^{-}-\tilde{c}}\text{,}
	\]
	where we have used the fact that \( c^{+}-\tilde{c} \) and \( c^{-}-\tilde{c} \) are aligned and have opposite directions.
	Since \( \tilde{c} \) is an orthogonal projection, we have \( \abs{\tilde{c}-a} \leq \abs{c^{\pm}-a} \).
	Hence, we find
	\[
	\abs{P\farg{c^{+}-a}-P\farg{c^{-}-a}}
	\geq
	2\frac{\abs{c^{+}-\tilde{c}}\abs{c^{-}-\tilde{c}}}{\abs{c^{+}-a}\abs{c^{-}-a}}\text{.}
	\]
	Finally, since \( a \in \ccube{2^{-n}}{c}{\ell} \), we find that \( \abs{c^{\pm}-\tilde{c}} \gtrsim 2^{-n} \) while \(\abs{c^{\pm}-a} \lesssim 2^{1-n} \), whence we conclude that
	\begin{equation*}
		\abs{P\farg{c^{+}-a}-P\farg{c^{-}-a}}
		\gtrsim
		1\text{,}
	\end{equation*}
	concluding the proof of the lemma.
\end{proof}

\begin{proof}[Proof of Lemma~\ref{lemma:geom2}]
	Since everything takes place inside one fixed plane --- depending on \( c \) and \( a \) --- we may assume that \( \ell = 2 \).
	We refer the reader to Figure~\ref{fig:geom2} for an illustration of our constructions and notation inside the plane.
	\begin{figure}[t]
		\centering
		\includegraphics[page=2]{figures_ctrex_singular_projection.pdf}
		\caption{Estimate of \( \abs{P\farg{c^{+}-a}-P\farg{c^{-}-a}} \) in Lemma~\ref{lemma:geom2}}
		\label{fig:geom2}
	\end{figure}
	Let \( \theta \) denote the angle between the vectors \( c^{+}-a \) and \( c^{-}-a \), and let \( x_{1} = \abs{c^{+}-a} \) and \( x_{2} = \abs{c^{-}-a} \).
	By Al Kashi's formula, we know that
	\[
	\abs{P\farg{c^{+}-a}-P\farg{c^{-}-a}}^{2}
	=
	2 - 2\cos\theta\text{,}
	\]
	while 
	\[
	2^{2\parens{1-n}}
	=
	x_{1}^{2} + x_{2}^{2} - 2x_{1}x_{2}\cos\theta
	=
	\parens{x_{1}-x_{2}}^{2}
	+
	x_{1}x_{2}\parens{2 - 2\cos\theta}\text{.}
	\]
	Hence,
	\begin{equation}
		\label{eq:ce577bacc95dbcc2}
		\abs{P\farg{c^{+}-a}-P\farg{c^{-}-a}}^{2}
		=
		\frac{2^{2\parens{1-n}}-\parens{x_{1}-x_{2}}^{2}}{x_{1}x_{2}}\text{.}
	\end{equation}
	
	By another application of Al Kashi's formula, we find
	\begin{gather*}
		x_{1}^{2} = 2^{2\parens{1-n}} + \abs{a-c}^{2} - 2\cdot 2^{1-n}\abs{a-c}\cos{\varphi} \\
		\intertext{and}
		x_{2}^{2} = 2^{2\parens{1-n}} + \abs{a-c}^{2} + 2\cdot 2^{1-n}\abs{a-c}\cos{\varphi}\text{.}
	\end{gather*}
	Therefore, as either \( x_{1} \geq \abs{a-c} \) or \( x_{2} \geq \abs{a-c} \), we find
	\[
		\abs{x_{1}-x_{2}}\abs{a-c}
		\leq
		\abs{x_{1}^{2}-x_{2}^{2}}
		=
		4\cdot 2^{1-n}\abs{a-c}\abs{\cos{\varphi}}\text{,}
	\]
	so that, using~\ref{it:ass_varphi}, \( \abs{x_{1}-x_{2}} \leq \frac{1}{2}2^{1-n} \).
	Combining~\ref{it:ass_varphi} and~\ref{it:ass_dist}, we also find that \( x_{1} \simeq \abs{a-c} \simeq x_{2} \).
	We conclude from~\eqref{eq:ce577bacc95dbcc2} that
	\begin{equation*}
		\abs{P\farg{c^{+}-a}-P\farg{c^{-}-a}}
		\gtrsim
		\frac{2^{1-n}}{\abs{a-c}}\text{,}
	\end{equation*}
	hence finishing the proof of the lemma.
\end{proof}

In view of the counterexample provided by Theorem~\ref{theorem:ctrex_sle1}, the reader might wonder whether it would not be possible to work with a different singular projection, which would satisfy improved estimates near its singular set, to overcome the above issue.
We conclude this section with a short discussion that strongly suggests that there is no such hope.

We again work in the model case where \( \mN = \bS^{\ell-1} \subset \bR^{\ell} \).
In order to perform the approach by projection, we are looking for a retraction \( P \) into \( \bS^{\ell-1} \), which is defined on the whole \( \bR^{\ell} \) except on a finite set of point singularities.
When restricting to \( \cball{}{}{\ell} \), by a well-known  fact from topology, there must exist at least one singular point \( a \in \ball{}{}{\ell} \) of \( P \) such that, on any sufficiently small sphere centered at \( a \), \( P\) covers the whole \( \bS^{\ell-1} \).
It is then easy to show that 
\[
	\lim_{r \to 0} \sup_{x \in \partial\ball{a}{r}{}} \abs{x-a}\abs{\Dext{P}\farg{x}} \gtrsim 1\text{.}	
\]
In some sense, this means that the rate of blow up
\[
	\abs{\Dext{P}\farg{x}}
	\simeq
	\frac{1}{\abs{x-a}}
\]
near the singularity \( a \) that was required for \( P \) to be an \( \ell \)\=/singular projection cannot be improved.
As our proof of Theorem~\ref{theorem:ctrex_sle1} essentially relies on the estimate satisfied by \( P \), this suggests that there is no hope to find a singular projection of \( \bR^{\ell} \) onto \( \bS^{\ell-1} \) that would avoid the obstruction observed above, although we shall not attempt to present a complete argument here to avoid excessive technicality.

\section{The method of almost projection}
\label{sect:almost_projection}

In this last section, we explain how the idea used in the proof of Theorem~\ref{theorem:ctrex_sle1} can also be used to show that the \emph{method of almost projection} is also sensitive to the value of \( p \) instead of \( sp \) when \( 0 < s < 1 \). 
Let us first explain the key idea of this method, introduced by Haj\l asz~\cite{Hajlasz1994} when \( s = 1 \) and pursued by Bousquet, Ponce, and Van Schaftingen~\cite{BousquetPonceVanSchaftingen2013} when \( s \geq 1 \), and which has been applied so far to obtain strong and weak density results for smooth maps in Sobolev spaces of maps into manifolds that are \( \parens{\floor{sp}-1} \)\=/connected, respectively \( \parens{sp-1} \) connected with \( sp \in \bN_{\ast} \).
The starting point is again the non-existence of a globally defined retraction \( \upPi \colon \bR^{\nu} \to \mN \).
Instead of giving up the well-definedness of the map \( \upPi \) on the whole \( \bR^{\nu} \) as for the method of singular projection, the idea is to give up the fact that \( \upPi \) is a retraction on the whole \( \mN \).
That is, one is looking for a smooth map \( P \colon \bR^{\nu} \to \mN \) that coincides with \( \id_{\mN} \) on \( \mN \) \emph{except} on a small set where \( P\farg{x} \neq x \).

More precisely, it can be proved, see e.g.~\cite[Proposition~2.1]{BousquetPonceVanSchaftingen2013}, that if \( \mN \) is \( \parens{\ell-2} \)\=/connected, then for every \( \varepsilon > 0 \), there exists a smooth map \( P = P_{\varepsilon} \colon \bR^{\nu} \to \mN \) such that \( P = \id_{\mN} \) on \( \mN \setminus K \), where \( K = K_{\varepsilon} \) is a compact subset of \( \mN \) with \( \abs{K} \lesssim \varepsilon^{\ell-1} \), and such that 
\begin{equation}
\label{eq:c0bab852268dc089}
	\text{\( \norm{\Dext^{j}{P}}_{L^{\infty}} \lesssim \varepsilon^{-j} \) for every \( j \in \bN_{\ast} \).}
\end{equation}

To make our discussion more specific, we restrict to the case where \( \mN = \bS^{\ell-1} \subset \bR^{\ell} \).
In this situation, such a map \( P \) can be constructed by hand.
Indeed, one lets \( K = \cball{\varepsilon}{a}{} \) be the closure of the geodesic ball in \( \bS^{\ell-1} \) of radius \( \varepsilon \) and centered at \( a \), for any choice of \( a = a_{\varepsilon} \in \bS^{\ell-1} \), and defines \( P \) on \( K \) as a covering of \( \bS^{\ell-1} \setminus \Int{K} \) by \( K \) that coincides with the identity on \( \partial K \), obtained for instance via a stereographic projection; hence, \( P \) has degree \( 0 \), and can therefore be extended to an \( \bS^{\ell-1} \)\=/valued map defined on the whole \( \bR^{\ell} \).
For this construction, more can be said about the behavior of \( P \) on \( K \):
\begin{equation}
\label{eq:c0bab852268dc080}
	\abs{P\farg{x}-P\farg{y}} \simeq \varepsilon^{-1}\abs{x-y}
	\quad
	\text{for every \( x \), \( y \in \cball{\varepsilon/2}{a}{} \).}
\end{equation}

A crucial result concerning such a map \( P \) is the following, which relies once again on the Federer and Fleming averaging argument.
We denote by \( \overline{\upPi} \colon \bR^{\nu} \to \bR^{\nu} \) a chosen smooth extension --- non necessarily \( \mN \)\=/valued --- of the nearest point projection \( \upPi \colon \mN + \ball{\iota}{}{\nu} \to \mN \).
If \( s \geq 1 \) and \( sp \leq \ell-1 \), then for every \( \varepsilon \), there exists \( \xi_{\varepsilon} \) such that
\[
	\sup_{\varepsilon > 0} \abs{P_{\varepsilon} \circ \overline{\upPi} \circ  \parens{u-\xi_{\varepsilon}}}_{W^{s,p}\FromTo{\mM}{}}
	<
	+\infty
	\quad
	\text{for every \( u \in W^{s,p}\FromTo{\mM}{\mN} \).}
\]
If we actually have \( sp < \ell - 1 \), then it even holds that
\[
	\parens{\upPi\farg{\cdot-\xi_{\varepsilon}}\restrfun{\mN}}^{-1}\circ P_{\varepsilon} \circ \overline{\upPi} \circ  \parens{u-\xi_{\varepsilon}}
	\to
	u
	\quad
	\text{in \( W^{s,p}\FromTo{\mM}{\mN} \)  as \( \varepsilon \to 0 \).}
\]
With this at hand, it is easy to obtain a weak, respectively strong, approximation of \( u \) with smooth maps, by considering the map \( \parens{\upPi\farg{\cdot-\xi_{\varepsilon}}\restrfun{\mN}}^{-1}\circ P_{\varepsilon} \circ \overline{\upPi} \circ  \parens{u_{k}-\xi_{\varepsilon}} \), where \( \seq{u_{k}}{k \in \bN} \) is a sequence of \( \bR^{\nu} \)\=/valued smooth maps strongly converging to \( u \) in \( W^{s,p} \), and where \( k = k_{\varepsilon} \) is chosen sufficiently large via a diagonal argument, using the continuity of the composition operator on Sobolev spaces.

The following result shows that, in some sense, the method of almost projection is also sensitive to the more restrictive threshold \( p > \ell-1 \) when \( 0 < s < 1 \).

\begin{proposition}
\label{prop:ctrex_almost_projection}
	Assume that \( 0 < s < 1 \) and \( 1 \leq p < +\infty \) are such that \( sp < \ell-1 \) but \( p > \ell-1 \), and let \( P_{\varepsilon} \colon \bR^{\ell} \to \bS^{\ell-1} \) be a family of almost retractions satisfying the additional condition~\eqref{eq:c0bab852268dc080}.
	There exists a map \( u \in W^{s,p}\FromTo{\ball{}{}{\ell-1}}{\bS^{\ell-1}} \), independent of \( P_{\varepsilon} \), such that
	\[
		\sup_{\varepsilon > 0} \inf_{\xi \in \ball{\iota}{}{\ell}} \abs{P_{\varepsilon} \circ \overline{\upPi} \circ  \parens{u-\xi}}_{W^{s,p}\FromTo{\mM}{}}
		=
		+\infty\text{.}
	\]
\end{proposition}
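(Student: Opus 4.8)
The plan is to mimic the proof of Theorem~\ref{theorem:ctrex_sle1}, the role played there by the blow-up of the singular projection near its singular point being now played by the fact that, by~\eqref{eq:c0bab852268dc080}, an almost retraction \( P_{\varepsilon} \) dilates distances by the factor \( \varepsilon^{-1} \) on a geodesic ball of radius \( \simeq \varepsilon \). The starting observation is that, for any \( u \in W^{s,p}\FromTo{\ball{}{}{\ell-1}}{\bS^{\ell-1}} \) and any \( \xi \in \ball{\iota}{}{\ell} \), since \( \iota < 1 \) and \( \overline{\upPi} \) agrees with the nearest point projection \( z \mapsto z/\abs{z} \) on the tubular neighbourhood \( \bS^{\ell-1}+\ball{\iota}{}{\ell} \), one has \( \overline{\upPi} \circ \parens{u-\xi} = \Phi_{\xi} \circ u \), where \( \Phi_{\xi} \colon \bS^{\ell-1} \to \bS^{\ell-1} \), \( \Phi_{\xi}\farg{b} = \parens{b-\xi}/\abs{b-\xi} \), is a smooth diffeomorphism which is bi-Lipschitz with constants depending only on \( \iota \) and \( \ell \). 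Hence \( P_{\varepsilon} \circ \overline{\upPi} \circ \parens{u-\xi} = G_{\varepsilon,\xi} \circ u \) with \( G_{\varepsilon,\xi} = P_{\varepsilon} \circ \Phi_{\xi} \colon \bS^{\ell-1} \to \bS^{\ell-1} \) smooth, and~\eqref{eq:c0bab852268dc080} together with the bi-Lipschitz character of \( \Phi_{\xi} \) gives \( \abs{G_{\varepsilon,\xi}\farg{b}-G_{\varepsilon,\xi}\farg{b'}} \gtrsim \varepsilon^{-1}\abs{b-b'} \) whenever \( b \), \( b' \) lie in the closed geodesic ball \( \cball{c\varepsilon}{\tilde{a}}{} \) of \( \bS^{\ell-1} \), where \( \tilde{a} = \tilde{a}\farg{\varepsilon,\xi} = \Phi_{\xi}^{-1}\farg{a_{\varepsilon}} \), \( a_{\varepsilon} \in \bS^{\ell-1} \) is the centre of the geodesic ball on which \( P_{\varepsilon} \) fails to be the identity, and \( c > 0 \) depends only on \( \iota \) and \( \ell \). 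Crucially, this amplification holds with a single \( c \) for \emph{every} admissible family \( \seq{P_{\varepsilon}}{\varepsilon>0} \), every \( \varepsilon > 0 \) and every \( \xi \), the only thing varying being the position of \( \tilde{a} \) on the sphere.

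Next I would construct, for each \( n \in \bN \), a map \( v_{n} \) in exact analogy with the corresponding map in the proof of Theorem~\ref{theorem:ctrex_sle1}, but with the grid of cubes in the space of projection points replaced by a net of \( \bS^{\ell-1} \). Fix a net \( \seq{b_{n,k}}{1 \leq k \leq N_{n}} \) of \( \bS^{\ell-1} \) of mesh \( \simeq 2^{-n} \), so that \( N_{n} \simeq 2^{n(\ell-1)} \) and every point of \( \bS^{\ell-1} \) lies within geodesic distance \( \frac{c}{2}2^{-n} \) of some \( b_{n,k} \); for each \( k \) pick \( b_{n,k}^{\pm} \in \cball{\frac{c}{2}2^{-n}}{b_{n,k}}{} \) with \( \abs{b_{n,k}^{+}-b_{n,k}^{-}} \simeq 2^{-n} \). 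For each \( k \), build a sphere-valued patch \( v_{n,k} \in C^{\infty}\FromTo{\bR^{\ell-1}}{\bS^{\ell-1}} \), equal to a fixed base value \( b_{\ast} \) outside the unit cube and taking the values \( b_{n,k}^{+} \), \( b_{n,k}^{-} \) on two adjacent sub-balls, with
\[
	\abs{v_{n,k}}_{W^{s,p}\FromTo{\bR^{\ell-1}}{}}^{p} \lesssim 1
	\quad\text{while}\quad
	\abs{G \circ v_{n,k}}_{W^{s,p}\FromTo{\ball{}{}{\ell-1}}{}}^{p} \gtrsim 2^{np}
\]
for every smooth \( G \colon \bS^{\ell-1} \to \bS^{\ell-1} \) with \( \abs{G\farg{b_{n,k}^{+}}-G\farg{b_{n,k}^{-}}} \gtrsim 1 \); this is obtained exactly as in Lemma~\ref{lemma:patch}, by interpolating between \( b_{n,k}^{+} \) and \( b_{n,k}^{-} \) along the short geodesic (cost \( \lesssim \abs{b_{n,k}^{+}-b_{n,k}^{-}}^{p} \simeq 2^{-np} \)), clustering \( \simeq k_{n}^{\ell-1} \) rescaled copies inside \( \ball{1/2}{}{\ell-1} \) with \( k_{n} \simeq 2^{n/s} \) so that \( k_{n}^{sp} \simeq 2^{np} \), and finally transiting to \( b_{\ast} \) (cost \( \lesssim 1 \)); here \( sp < \ell-1 \) is what makes the clustered copies cheaper by the factor \( k_{n}^{sp-(\ell-1)} \) and keeps the total bounded. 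Letting \( v_{n} \) be a non-overlapping union of \( N_{n} \) translates of the \( v_{n,k} \), the countable patching property gives \( \abs{v_{n}}_{W^{s,p}}^{p} \lesssim N_{n} \simeq 2^{n(\ell-1)} \), whereas for \emph{every} \( \xi \in \ball{\iota}{}{\ell} \) the net property yields a \( k \) with \( b_{n,k} \in \cball{\frac{c}{2}2^{-n}}{\tilde{a}\parens{2^{-n},\xi}}{} \), hence \( b_{n,k}^{\pm} \in \cball{c2^{-n}}{\tilde{a}\parens{2^{-n},\xi}}{} \); applying the first paragraph with \( G = G_{2^{-n},\xi} \) gives \( \abs{G\farg{b_{n,k}^{+}}-G\farg{b_{n,k}^{-}}} \gtrsim 2^{n}\abs{b_{n,k}^{+}-b_{n,k}^{-}} \gtrsim 1 \), so that \( \inf_{\xi}\abs{P_{2^{-n}} \circ \overline{\upPi} \circ \parens{v_{n}-\xi}}_{W^{s,p}}^{p} \gtrsim 2^{np} \).

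Thus \( \inf_{\xi}\abs{P_{2^{-n}} \circ \overline{\upPi} \circ \parens{v_{n}-\xi}}_{W^{s,p}}^{p} / \abs{v_{n}}_{W^{s,p}}^{p} \gtrsim 2^{n(p-(\ell-1))} \to +\infty \) since \( p > \ell-1 \). From here one concludes either by a direct superposition --- placing rescaled copies of \( v_{n_{j}} \) in pairwise disjoint balls \( B^{(j)} \subset \ball{}{}{\ell-1} \) of radii \( r_{j} \to 0 \) chosen small enough that \( \sum_{j}\abs{\,\text{copy}_{j}}_{W^{s,p}}^{p} < +\infty \), which is possible precisely because \( sp < \ell-1 \) makes compression lower the \( W^{s,p} \) energy, the resulting \( u \in W^{s,p}\FromTo{\ball{}{}{\ell-1}}{\bS^{\ell-1}} \) then satisfying \( \inf_{\xi}\abs{P_{2^{-n_{j}}} \circ \overline{\upPi} \circ \parens{u-\xi}}_{W^{s,p}} \to +\infty \) and hence \( \sup_{\varepsilon}\inf_{\xi}\abs{P_{\varepsilon} \circ \overline{\upPi} \circ \parens{u-\xi}}_{W^{s,p}} = +\infty \) --- or, mirroring the proof of Theorem~\ref{theorem:ctrex_sle1} verbatim, by feeding the rescaled copies of \( v_{n} \) to the nonlinear uniform boundedness principle~\cite[Theorem~1.6]{MonteilVanSchaftingen2019} applied to the energy \( \mG\farg{u,A} = \sup_{\varepsilon>0}\inf_{\xi \in \ball{\iota}{}{\ell}}\cE^{s,p}\farg{P_{\varepsilon} \circ \overline{\upPi} \circ \parens{u-\xi},A} \), which is monotone and carries the same scaling exponent as \( \cE^{s,p} \) since \( P_{\varepsilon} \) acts only on the values (and this second route has the bonus, as for Theorem~\ref{theorem:ctrex_sle1}, of showing that such maps \( u \) are dense in \( W^{s,p}\FromTo{\ball{}{}{\ell-1}}{\bS^{\ell-1}} \)). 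The step I expect to be the main obstacle is the uniform geometric amplification of the first paragraph: one must check carefully that the ball \( \cball{\varepsilon/2}{a_{\varepsilon}}{} \) on which \( P_{\varepsilon} \) satisfies~\eqref{eq:c0bab852268dc080} pulls back under \( \Phi_{\xi} \) to a geodesic ball of comparable radius around \( \tilde{a} \), with a constant independent of the family, of \( \varepsilon \), and of \( \xi \in \ball{\iota}{}{\ell} \) --- which rests on the bi-Lipschitz bounds for \( \Phi_{\xi} \) being controlled solely in terms of \( \iota < 1 \). Everything else is a transcription of Lemma~\ref{lemma:patch} and of the proof of Theorem~\ref{theorem:ctrex_sle1} with \( \ell \) replaced by \( \ell-1 \); note that the borderline case \( p = \ell-1 \) does not occur here, so no refined logarithmic estimate is needed.
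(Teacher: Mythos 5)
Your proposal is correct and follows essentially the same route as the paper's proof: the same net of \( \simeq \varepsilon^{1-\ell} \) centres on \( \bS^{\ell-1} \) carrying pairs of values at mutual distance \( \simeq \varepsilon \), the same patch construction transcribed from Lemma~\ref{lemma:patch} (with the clustering trick amortising the transition to a common base value), the same observation that for every shift \( \xi \) at least one pair is sent by the nearest point projection into the amplification ball of \( P_{\varepsilon} \) from~\eqref{eq:c0bab852268dc080} (with the smallness constant \( c \) depending only on \( \iota \) and \( \ell \)), and the same conclusion by rescaling the scale-\( 2^{-n} \) maps and gluing them into disjoint balls. One genuine caveat: your alternative conclusion via a direct application of the nonlinear uniform boundedness principle to \( \mG\farg{u,A} = \sup_{\varepsilon>0}\inf_{\xi}\cE^{s,p}\farg{P_{\varepsilon}\circ\overline{\upPi}\circ\parens{u-\xi},A} \) does not work as stated: unlike the energy \( \inf_{a}\cE^{s,p}\farg{P\circ\parens{u-a},\cdot} \) used for Theorem~\ref{theorem:ctrex_sle1} (an infimum of a sum dominates the sum of infima, so superadditivity survives), the supremum over \( \varepsilon \) destroys superadditivity, and the paper explicitly remarks that the principle ``could not be applied as such''; only your first, hands-on route is viable, and it is precisely what the paper does. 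Within that route, note also that the radii \( r_{j} \) cannot be chosen merely ``small enough for summability'': compression lowers the projected energy by the same factor \( r_{j}^{\ell-1-sp} \), so the choice must simultaneously keep \( \sum_{j} \) of the copies' energies finite and retain the blow-up of the projected energies, which is possible exactly because the ratio grows like \( 2^{n\parens{p-\ell+1}} \) --- the paper encodes this balance through the scaling exponent \( \alpha\parens{\ell-1}/\parens{\ell-1-sp} \) with \( 1 < \alpha < p/\parens{\ell-1} \), and you should make the analogous choice (or a sufficiently fast subsequence \( n_{j} \)) explicit.
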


Although the assumption~\eqref{eq:c0bab852268dc080} may seem to be more stringent than~\eqref{eq:c0bab852268dc089}, it is in practice satisfied by any almost retraction constructed in a \emph{reasonable} way, in particular using the general technique of construction provided, e.g., in the proof of~\cite[Proposition~2.1]{BousquetPonceVanSchaftingen2013} or in~\cite[\S2 to 4]{Hajlasz1994}.

We also observe that, unlike for Theorem~\ref{theorem:ctrex_sle1}, here our method does not allow to cover the limiting case \( p = \ell-1 \), in which we expect the method of almost projection to be applicable when \( 0 < s < 1 \), for instance in the context of weak density of smooth maps.

\begin{proof}
	As the construction is similar to the one in the proof of Theorem~\ref{theorem:ctrex_sle1}, we mostly focus on what should be changed in the argument.
	Also, since we are working here with \( \bS^{\ell-1} \)\=/valued maps, we abuse the language by saying that a map is \emph{compactly supported} whenever it is constant outside of a compact set.
	
	For some \( c > 0 \) to be chosen sufficiently small later on, we consider a collection \( \seq{c_{i}}{i \in I} \) of points in \( \bS^{\ell-1} \), with \( \abs{I} \simeq \varepsilon^{1-\ell} \), and such that any ball of radius \( c\varepsilon \) on \( \bS^{\ell-1} \) contains at least one \( c_{i} \).
	For each \( i \in I \), we pick two points \( c_{i}^{-} \), \( c_{i}^{+} \in \bS^{\ell-1} \) in \( \ball{c\varepsilon/2}{c_{i}}{} \) such that \( \dist\tuple{c_{i}^{-},c_{i}^{+}} \simeq \varepsilon \).
	We define the \( \bS^{\ell-1} \)\=/valued map \( v_{\varepsilon,i} \) by mimicking the construction of Lemma~\ref{lemma:patch}, replacing \( c^{\pm} \) by \( c_{i}^{\pm} \), and we let \( v_{\varepsilon} \) be defined by gluing together translated versions of the \( v_{\varepsilon,i} \), with \( i \in I \), so that their supports do not overlap.
	
	By the counterpart of~\eqref{eq:969c30cd8dd53af8} and the countable patching property of Sobolev mappings, it holds that
	\[
		\cE^{s,p}\farg{v_{\varepsilon},\bR^{\ell-1}}
		\lesssim
		\abs{I}
		\lesssim
		\varepsilon^{1-\ell}\text{.}
	\]
	On the other hand, if \( c \) is chosen sufficiently small, then for every \( a \in \bS^{\ell-1} \) and for every \( \xi \) with \( \abs{\xi} < \iota \), there exists at least one \( i \in I \) such that \( \overline{\upPi} \circ \parens{c_{i}^{\pm}-\xi} \in \ball{\varepsilon/2}{a}{} \) and the distance between both these points is of order \( \varepsilon \).
	Combining the counterpart of~\eqref{eq:969c30cd8dd53af9} with~\eqref{eq:c0bab852268dc080}, we find that
	\[
		\inf_{\xi \in \ball{\iota}{}{\ell}}\cE^{s,p}\farg{P_{\varepsilon} \circ \overline{\upPi} \circ  \parens{w_{\varepsilon}-\xi},\bR^{\ell-1}}
		\gtrsim
		\varepsilon^{-p}\text{.}
	\]
	
	To conclude the argument, we need to scale the maps \( v_{\varepsilon} \) to smaller balls, so that we can glue them together to form the required map \( u \).
	For this purpose, let \( \alpha > 1 \) to be chosen sufficiently close to \( 1 \) later on.
	We observe that the construction of the \( v_{\varepsilon} \) may be performed in such a way that its support is contained in a ball of radius of order \( \varepsilon^{-1} \).
	We define \( w_{\varepsilon}\farg{x} = v_{\varepsilon}\farg{x/\varepsilon^{\alpha\parens{\ell-1}/\parens{\ell-1-sp}}} \). 
	Hence,
	\begin{enumerate}[label=\((\textup{\roman*})\)]
		\item\label{it:rad_supp} the support of \( w_{\varepsilon} \) is contained in a ball of radius of order \( \varepsilon^{\alpha\parens{\ell-1}/\parens{\ell-1-sp}} \);
		\item\label{it:en_weps} \( \cE^{s,p}\farg{w_{\varepsilon},\bR^{\ell-1}} \lesssim \varepsilon^{\alpha\parens{\ell-1}}\varepsilon^{1-\ell} \);
		\item\label{it:en_proj_weps} \( \inf\limits_{\xi \in \ball{\iota}{}{\ell}}\cE^{s,p}\farg{P_{\varepsilon} \circ \overline{\upPi} \circ  \parens{w_{\varepsilon}-\xi},\bR^{\ell-1}}
		\gtrsim
		\varepsilon^{\alpha\parens{\ell-1}}\varepsilon^{-p} \).
	\end{enumerate}
	We choose \( \alpha > 1 \) sufficiently close to \( 1 \) so that \( \alpha\parens{\ell-1} - p < 0 \).
	We now define \( u \) by gluing together translated copies of the \( w_{\varepsilon_{n}} \) so that their supports do not overlap, where \( \varepsilon_{n} = 2^{-n} \).
	Property~\ref{it:rad_supp} ensures that this can be done in a ball of fixed radius --- up to an additional scaling, we may assume that \( u \) is supported in \( \ball{}{}{\ell-1} \).
	Property~\ref{it:en_weps} and the countable patching property of Sobolev maps ensure that \( u \in W^{s,p} \).
	Finally, property~\ref{it:en_proj_weps} ensures that 
	\[
		\sup_{\varepsilon > 0} \inf_{\xi \in \ball{\iota}{}{\ell}} \abs{P_{\varepsilon} \circ \overline{\upPi} \circ  \parens{u-\xi}}_{W^{s,p}\FromTo{\mM}{}}
		=
		+\infty\text{,}
	\]
	which concludes the proof of the proposition.
\end{proof}

We observe that, in the proof of Proposition~\ref{prop:ctrex_almost_projection}, the last step is very similar to the proof of the nonlinear uniform boundedness principle.
However, here this principle could not be applied as such, as the energy under consideration is not superadditive due to the presence of the \( \sup \) over \( \varepsilon \).
In our specific situation, it is nevertheless possible to mimic the proof of the nonlinear uniform boundedness principle without relying on the superadditivity assumption of the energy.
This is possible because the maps \( v_{\varepsilon} \) that we construct to contradict the linear growth of the energy with respect to the Sobolev energy, and which are to be scaled and glued together to reach the conclusion, can be chosen to be compactly supported, which removes the need of the \emph{clustering} step --- see~\cite{MonteilVanSchaftingen2019}, and also the proof of Lemma~\ref{lemma:patch} --- which is the place where this assumption is used.

\end{document}